\definecolor{shadecolor}{gray}{0.875}
\numberwithin{equation}{section}
\newcommand{\sho}[1]{{\color{red} \sf $\clubsuit\clubsuit\clubsuit$ Sho: [#1]}}
\theoremstyle{plain}
\newtheorem{prop}{Proposition}[section]
\newtheorem{theo}[prop]{Theorem}
\newtheorem{lemm}[prop]{Lemma}
\theoremstyle{definition}
\newtheorem{defi}[prop]{Definition}
\newtheorem{conj}[prop]{Conjecture}
\newtheorem{rema}[prop]{Remark}
\newtheorem{exam}[prop]{Example}
\author{Sho Tanimoto}
\address{Department of Mathematics, Faculty of Science, Kumamoto University, Kurokami 2-39-1 Kumamoto 860-8555 Japan}
\address{Priority Organization for Innovation and Excellence, Kumamoto University}
\email{stanimoto@kumamoto-u.ac.jp}
\title[Upper bounds of Manin type]{On upper bounds of Manin type}
\begin{document}

\begin{abstract}
We introduce a certain birational invariant of a polarized algebraic variety and use that to obtain upper bounds for the counting functions of rational points on algebraic varieties. Using our theorem, we obtain new upper bounds of Manin type for $28$ deformation types of smooth Fano $3$-folds of Picard rank $\geq 2$ following Mori-Mukai's classification.
We also find new upper bounds for polarized K3 surfaces $S$ of Picard rank $1$ using Bayer-Macr\`i's result on the nef cone of the Hilbert scheme of two points on $S$.
\end{abstract}

\maketitle

\date{\today}

\section{Introduction} 
\label{sect:intro}

A driving question in diophantine geometry is to prove asymptotic formulae for the counting function of rational points on a projective variety. Manin's conjecture, originally formulated in \cite{BM}, predicts a precise asymptotic formula when the underlying variety is smooth Fano, or more generally smooth and rationally connected. This asymptotic formula has a description in terms of the geometric invariants of the underlying variety.

In this paper we consider questions related to the following weaker version of the conjecture which is called weak Manin's conjecture: let $X$ be a geometrically uniruled smooth projective variety defined over a number field $k$ and let $L$ be a big and nef divisor on $X$. One can associate a height function
\[
H_{L} : X(k) \rightarrow \mathbb R_{>0},
\]
to $(X, L)$, and we consider the following counting function:
\[
N(U, L, T) = \#\{ P \in U(k) | H_L(P) \leq T\}
\]
for an appropriate Zariski open subset $U\subset X$. Weak Manin's conjecture predicts that this function is governed by the following geometric invariant of $(X, L)$:
\[
a(X, L) = \inf\{ t \in \mathbb R \mid K_X + tL \in \overline{\mathrm{Eff}}^1(X)\},
\]
where $\overline{\mathrm{Eff}}^1(X)$ is the cone of pseudo-effective divisors on $X$. 
Here is the statement of weak Manin's conjecture:
\begin{conj}[Weak Manin's conjecture/Linear growth conjecture]
\label{conj:weakManin}
Let $X$ be a geometrically uniruled smooth projective variety defined over a number field $k$ and let $L$ be a big and nef divisor on $X$. Then there exists a non-empty Zariski open subset $U \subset X$ such that for any $\epsilon > 0$
\[
N(U, L, T) = O_\epsilon(T^{a(X, L) +\epsilon}).
\]
Note that for $L =-K_X$, we have $a(X, L) =1$ so that it backs up the word ``linear growth''.
\end{conj}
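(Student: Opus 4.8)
The plan is to attack Conjecture~\ref{conj:weakManin} in its stated generality by splitting the statement into a geometric half, which produces the Zariski open set $U$ and identifies $a(X,L)$ as an accumulation threshold, and an analytic half, which must deliver an upper bound of the predicted strength on $U$. The organizing principle is that rational points accumulate faster than $T^{a(X,L)}$ exactly along subvarieties $Y \subset X$ whose own invariant $a(Y, L|_Y)$ exceeds $a(X,L)$; accordingly $U$ should be the complement of the union of all such $Y$, and the entire content of the conjecture is that on this complement no faster-than-$T^{a(X,L)+\epsilon}$ growth survives.

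First I would carry out the geometric reduction. Because $X$ is geometrically uniruled, $K_X$ is not pseudo-effective and $a(X,L) > 0$, while bigness of $L$ makes $a(X,L)$ finite; I would begin by establishing that $a(X,L) \in \mathbb{Q}$ (the rationality of the Fujita invariant, which follows from the finite generation and rationality results of the minimal model program) and that the adjoint ring $\bigoplus_{m \geq 0} H^0\!\left(X, \lfloor m(K_X + a(X,L)L)\rfloor\right)$ defines a canonical rational fibration $\pi : X \dashrightarrow Z$. The decisive input is a boundedness statement: the subvarieties $Y \subset X$ with $a(Y, L|_Y) > a(X,L)$ form a bounded family, so their union $V$ is a proper Zariski-closed subset and $U := X \setminus V$ is the desired open set. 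I would then check the compatibility of $a(\,\cdot\,, L)$ with restriction to fibers, arranging that a general fiber $F$ of $\pi$ satisfies $a(F, L|_F) = a(X,L)$ with $K_F + a(X,L)L|_F$ numerically trivial, which isolates the extremal geometry responsible for the exponent.

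With $U$ and $\pi$ fixed, I would run an induction on $\dim X$ along the fibration. For each base point $z \in Z(k)$ one counts points of $H_L$-height at most $T$ in the fiber $X_z$, and then sums these counts over $Z(k)$. The per-fiber estimates I would extract from the determinant method of Heath--Brown and Salberger, applied to a projective model obtained from a large multiple $|mL|$; here the rescaling $a(X, mL) = a(X,L)/m$ together with $H_{mL} \asymp H_L^m$ keeps the target exponent consistent. The essential demand is uniformity of the implied constants across the fibers, so that the summation over the base does not inflate the exponent, which one would try to secure through effective, family-uniform versions of the dimension-growth estimates together with control of the contribution of the low-dimensional base.

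The principal obstacle, and the reason Conjecture~\ref{conj:weakManin} remains open, is the analytic step: no available method produces the sharp exponent $a(X,L)$. The determinant method returns exponents governed by the degree and dimension of a projective model rather than by the fine invariant $a(X,L)$, and it is weakest precisely in the low-degree Fano range where the conjecture is most interesting --- already for del Pezzo surfaces the clean bound $O_\epsilon(T^{1+\epsilon})$ lies beyond it --- while the circle method attains the correct order only for complete intersections of very large dimension relative to degree. Closing this gap in general would require an upper-bound technique genuinely sensitive to the pseudo-effective cone $\overline{\mathrm{Eff}}^1(X)$, which does not presently exist. It is exactly this gap that motivates the paper's actual strategy of replacing $a(X,L)$ by a coarser birational invariant that can be estimated unconditionally, retaining the boundedness-and-fibration skeleton above while trading the sharp exponent for a provable one.
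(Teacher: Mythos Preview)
The statement you were asked to prove is Conjecture~\ref{conj:weakManin}, and it is genuinely a conjecture: the paper does not prove it, nor does it claim to. It is stated in the introduction as the motivating open problem, and the entire paper is devoted to obtaining unconditional but weaker upper bounds (with exponent $2n\delta(X,L)$ or $2\alpha$ in place of $a(X,L)$) for special classes of varieties. There is therefore no ``paper's own proof'' to compare your proposal against.

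Your proposal is not a proof either, and you acknowledge this explicitly in the final paragraph: the analytic step fails because no known method yields the sharp exponent $a(X,L)$, and the determinant method returns exponents that are insensitive to the pseudo-effective cone. What you have written is an accurate diagnosis of why the conjecture is open and a fair description of the geometric infrastructure (rationality of $a(X,L)$, the canonical fibration, and the boundedness result recorded in the paper as Theorem~\ref{theo:closed}) that one expects to feature in any eventual proof. That is useful context, but it is a strategy sketch together with an explanation of its own failure, not a proof of the conjecture.

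In short: there is nothing to grade here against the paper, because the paper offers no proof of this statement; your write-up correctly identifies it as open and correctly anticipates that the paper's contribution is to replace $a(X,L)$ by a coarser invariant $\delta(X,L)$ for which unconditional bounds are available.
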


\begin{rema}
There are counterexamples to a version of this conjecture where one assumes $L$ to be only big but not nef. See \cite[Section 5.1]{LST18}
\end{rema}

The starting point of the current research is \cite{Mck11}. In this paper, McKinnon shows that Vojta's conjecture implies weak Manin's conjecture for K3 surfaces and more generally varieties with Kodaira dimension $0$ assuming the Non-Vanishing conjecture in the minimal model program. (For such varieties, the $a$-invariant is $0$.) While McKinnon's result is conditional on Vojta's conjecture, our results are unconditional: they do not rely on Vojta's conjecture. In our approach, instead of appealing to Vojta's conjecture, we use the positivity of divisors by introducing the following invariant measuring the local positivity of big divisors:

\begin{defi}
Let $X$ be a normal projective variety defined over an algebraically closed field of characteristic $0$ and $H$ be a big $\mathbb Q$-Cartier divisor on $X$. 
We consider $W = X \times X$ and denote each projection by $\pi_i : W \rightarrow X_i$.
Let $\alpha : W' \rightarrow W$ be the blow up of the diagonal and denote its exceptional divisor by $E$, i.e., the pullback of the diagonal.
For any $\mathbb Q$-Cartier divisor $L$ on $X$ we denote $\alpha^*\pi_1^*L + \alpha^*\pi_2^*L$ by $L[2]$.
We define the following invariant 
\[
\delta(X, H) = \inf\left\{s\in \mathbb R \middle| \begin{matrix}\text{ for any component $V \subset \mathrm{SB}(sH[2]- E)$ not contained in $E$,} \\
\text{one of $\pi_i\circ \alpha|_V$ is not dominant to $X_i$. }
\end{matrix}\right\},
\] 
where $\mathrm{SB}(sH[2]- E)$ is the stable base locus of a $\mathbb R$-divisor $sH[2]- E$. We call this invariant the $\delta$-invariant.
\end{defi}

Inspired by \cite{Mck11}, we obtain the following general result on the counting functions of rational points on algebraic varieties:
\begin{theo}
\label{theo:generalintro}
Let $X$ be a normal projective variety of dimension $n$ defined over a number field $k$ and $L$ be a big $\mathbb Q$-Cartier divisor on $X$. 
Then for any $\epsilon > 0$ there exists a non-empty Zariski open subset $U = U(\epsilon) \subset X$ such that  we have
\[
N(U, L, T) = O_\epsilon(T^{2n\delta(X, L) + \epsilon}).
\]
\end{theo}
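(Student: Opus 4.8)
The plan is to turn the positivity encoded in $\delta(X,L)$ into a \emph{gap principle} --- two distinct $k$-rational points of bounded $H_L$-height lying on a suitable dense open $U$ cannot be too close at a fixed archimedean place --- and then to bound $N(U,L,T)$ by an elementary ball-packing estimate, which is exactly what produces the exponent $2n\delta(X,L)$. Fix $\epsilon>0$ and choose a \emph{rational} number $s$ with $\delta(X,L)<s$ and $2ns<2n\delta(X,L)+\epsilon$. Being larger than the infimum $\delta(X,L)$, this $s$ satisfies the condition in the definition of $\delta$, so every component $V$ of $\mathrm{SB}(sL[2]-E)$ with $V\not\subseteq E$ is non-dominant onto $X_1$ or onto $X_2$ via $\pi_i\circ\alpha$ (I take $s$ rational so that $\mathrm{SB}(sL[2]-E)$ is an honest base locus with finitely many components). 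Note also that $sL[2]-E$ is necessarily $\mathbb Q$-effective: otherwise $\mathrm{SB}(sL[2]-E)=W'$, and the component $W'\not\subseteq E$ is dominant onto both factors, contradicting the condition. For each bad component $V$ pick an index $i$ with $\pi_i\circ\alpha|_V$ non-dominant, set $Z_V=\overline{\pi_i(\alpha(V))}\subsetneq X$, and let $Z\subsetneq X$ be the union of the $Z_V$, of $\mathrm{Sing}(X)$, and of the finitely many proper closed subsets produced by the elementary height comparisons used below; put $U=X\setminus Z$. (By Northcott's theorem $N(U,L,T)<\infty$ as $L$ is big, and if $U(k)$ is finite there is nothing to prove.) The geometric input is then: for any two distinct $P,Q\in U(k)$, the point $(P,Q)\in W(k)$ lies off the diagonal and so has a unique lift $R_{P,Q}\in W'(k)$, and $R_{P,Q}$ lies neither on $E$ nor on $\mathrm{SB}(sL[2]-E)$ --- for if $R_{P,Q}$ lay on a component $V\not\subseteq E$ with $\pi_i\circ\alpha|_V$ non-dominant, then the $i$-th coordinate of $(P,Q)$ would lie in $Z_V\subseteq Z$.

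Next I would run the Weil height machine on $W'$. Since $sL[2]-E$ is $\mathbb Q$-effective and $R_{P,Q}\notin\mathrm{SB}(sL[2]-E)$, there is a constant independent of $P,Q$ (the complement of $\mathrm{SB}(sL[2]-E)$ is covered by finitely many affine-type opens $W'\setminus\mathrm{Supp}(D_0)$ with $D_0\in|m(sL[2]-E)|$) such that $h_{sL[2]-E}(R_{P,Q})\ge O(1)$. Writing $h_{sL[2]-E}=s\,h_{L[2]}-h_E+O(1)$ and using $h_{L[2]}(R_{P,Q})=h_L(P)+h_L(Q)+O(1)$ (from $L[2]=\alpha^*\pi_1^*L+\alpha^*\pi_2^*L$ and $\pi_i(\alpha(R_{P,Q}))=P,Q$), one obtains for all distinct $P,Q\in U(k)$
\[
h_E(R_{P,Q})\ \le\ s\bigl(h_L(P)+h_L(Q)\bigr)+O(1),\qquad\text{i.e.}\qquad H_E(R_{P,Q})\ \ll\ \bigl(H_L(P)H_L(Q)\bigr)^{s}.
\]
Now $h_E$ measures $v$-adic proximity to the diagonal. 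Since $\alpha$ is the blow-up of $\Delta\cong X$ one has $\alpha_*\mathcal O_{W'}(-E)=\mathcal I_\Delta$, so if $v_0$ is a fixed archimedean place of $k$ and $d_{v_0}$ a fixed $[0,1]$-valued $v_0$-adic distance on $X(k_{v_0})$, the local Weil function of $E$ satisfies $\lambda_{E,v_0}(R_{P,Q})=-[k_{v_0}:\mathbb R]\log d_{v_0}(P,Q)+O(1)$ when $P$ and $Q$ are $v_0$-close (and is bounded otherwise), because the $v_0$-distance from $(P,Q)$ to $\Delta$ is comparable to the $v_0$-distance from $P$ to $Q$. As $h_E\ge\lambda_{E,v_0}+O(1)$, the displayed bound gives the \textbf{gap principle}: there is $c>0$ with
\[
d_{v_0}(P,Q)\ \ge\ c\,T^{-2s/[k_{v_0}:\mathbb R]}\qquad\text{for all distinct }P,Q\in U(k)\text{ with }H_L(P),H_L(Q)\le T.
\]

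Finally, $X(k_{v_0})$ is a compact metric space of real dimension $m:=n\,[k_{v_0}:\mathbb R]$, and $X(k)\hookrightarrow X(k_{v_0})$ is injective, so $\{P\in U(k):H_L(P)\le T\}$ is a $\rho$-separated subset of $X(k_{v_0})$ with $\rho=c\,T^{-2s/[k_{v_0}:\mathbb R]}$. Covering $X(k_{v_0})$ by finitely many bounded charts and using that a $\rho$-separated subset of a bounded region of $\mathbb R^{m}$ has $O(\rho^{-m})$ elements, we get
\[
N(U,L,T)\ =\ O\bigl(\rho^{-m}\bigr)\ =\ O\bigl(T^{\,2sn}\bigr),
\]
the local degree $[k_{v_0}:\mathbb R]$ cancelling. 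Since $2sn<2n\delta(X,L)+\epsilon$, this is the asserted bound. The step I expect to be the main obstacle is the local interpretation of $h_E$: one must establish, \emph{uniformly} in $P$ and $Q$ and also near singular points of $X$, that the Weil function of the exceptional divisor $E$ controls the $v_0$-adic distance $d_{v_0}(P,Q)$ --- this is precisely where the description of $\alpha$ as the blow-up of the diagonal, a careful comparison of $\mathcal I_\Delta$ with a local distance on $X$, and the inclusion of $\mathrm{Sing}(X)$ (together with the auxiliary loci coming from the height comparisons) in $Z$ are needed. The packing estimate and the Weil-height bookkeeping are routine.
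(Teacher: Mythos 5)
Your proposal is correct and follows essentially the same route as the paper: the gap (repulsion) principle extracted from the positivity of $sL[2]-E$ away from its stable base locus, with the images of the bad components excised, is exactly the paper's Theorem~\ref{theo:repulsion}, and the ball-packing step is the paper's Theorem~\ref{theo:general}. Two small points of divergence are worth noting. First, for the counting step the paper uses a lower bound for the local Tamagawa measure of small $v$-adic balls (Lemma~\ref{lemm: smallball}) rather than your elementary $\rho$-separated-set count in bounded charts; the two are interchangeable here. Second, the obstacle you flag at the end --- uniformity of the comparison between the Weil function of $E$ and the $v$-adic distance near singular points of $X$ --- is disposed of in the paper not by putting $\mathrm{Sing}(X)$ into the excluded set (which does not prevent rational points of $U$ from accumulating $v$-adically near the singular locus, where the Lipschitz-graph description of $X$ in local coordinates can degenerate) but by first replacing $X$ with a resolution, which is harmless because $\delta$ is a birational invariant (Lemma~\ref{lemm:birational}); you should do the same, after which your argument closes.
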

Previous results applying to general projective varieties are results related to dimension growth conjecture obtained by Browning, Heath-Brown, and Salberger (\cite{BHBS06}), and Salberger (\cite{Sal07}). Recall that the dimension growth conjecture of Heath-Brown, which is proved by Salberger, states that for any subvariety $X \subset \mathbb P^n$, the hyperplane class $H$, and any $\epsilon > 0$ we have 
\[
N(X, H, T) = O(T^{\dim X + \epsilon}).
\]
For many examples of projective varietes $X$ of dimension $\leq 3$ we proved $\delta(X, -K_X) \leq \frac{1}{2}$. (For example, we compute $\delta(X, -K_X)$ for most of $3$ dimensional Fano conic bundles with rational sections in Section~\ref{sec:Fanoconic} following Mori-Mukai's classification, and we confirm that $\delta(X, -K_X) = \frac{1}{2}$ for $40$ deformation types out of $53$ deformation types.) Thus this theorem recovers some statements of \cite{BHBS06} and \cite{Sal07} on the dimension growth conjecture for varieties with $\delta(X, -K_X) \leq 1/2$. We conjecture that for a large portion of the class of Fano manifolds, we have $\delta(X, -K_X) \leq 1/2$ so our theorem should lead to an alternative proof of the dimension growth conjecture for certain Fano varieties.

However, there is no direct comparison between our result and results in \cite{BHBS06} and \cite{Sal07}. \cite{BHBS06} and \cite{Sal07} are certainly better in the sense that they obtain a bound for $N(X, L, T)$ and their constants only depend on the dimension of $X$, $\epsilon$, and the dimension of the ambient projective space where $X$ is embedded into. On the other hand, our method also has the advantage in the sense that our theorem applies to arbitrary big divisor and in many cases where $\delta(X, L)$ is the minimum, e.g., $3$-dimensional Fano conic bundles mentioned above, one does not need to introduce $\epsilon > 0$ in the above theorem. 
For most smooth projective varieties with non-negative Kodaira dimension, we conjecture that Theorem~\ref{theo:generalintro} gives better upper bounds than \cite{BHBS06} and \cite{Sal07}. For example, we have the following application of Theorem~\ref{theo:generalintro} to K3 surfaces:
\begin{theo}
\label{theo:K3}
Let $S$ be a K3 surface defined over a number field $k$ with a polarization $H$ of degree $2d$ such that $\mathrm{Pic}(\overline{S}) = \mathbb ZH$. Then for any $\epsilon > 0$, we have
\[
N(S, H, T) = O_\epsilon(T^{4\sqrt{\frac{4}{d} + \frac{5}{d^2}} + \epsilon}).
\]
\end{theo}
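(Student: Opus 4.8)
The plan is to apply Theorem~\ref{theo:generalintro} with $X = S$ and $L = H$, for which it suffices to produce a good upper bound for the $\delta$-invariant $\delta(S, H)$; since $n = 2$ here, we want $\delta(S,H) \le \frac{1}{2}\sqrt{\tfrac{4}{d} + \tfrac{5}{d^2}}$, which then yields the exponent $4\sqrt{4/d + 5/d^2} + \epsilon$. The quantity $\delta(S,H)$ is governed by the geometry of $W' = \Bl_\Delta(S \times S)$ together with the divisor $s H[2] - E$, and the key observation is that $W'$ is closely related to the Hilbert scheme $S^{[2]}$ of two points on $S$: there is a natural degree-two map $W' \to S^{[2]}$ (the Hilbert–Chow/involution quotient) identifying $S^{[2]}$ with $(S\times S)$ blown up along $\Delta$ and then quotiented by the swap, and under this correspondence $H[2]$ descends (up to the relevant multiple) to the natural polarization $H_2$ on $S^{[2]}$ induced by $H$, while $E$ descends to (a multiple of) the exceptional divisor $B$ of the Hilbert–Chow morphism $S^{[2]} \to \Sym^2 S$, whose class is $\tfrac12 \delta$ in the standard Beauville–Bogomolov notation. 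Thus bounding the stable base locus of $sH[2] - E$ reduces, after accounting for the factor of $2$, to deciding when a class of the form $H_2 - t\, B$ (equivalently $H_2 - t \delta$-type class) lies in the nef cone, or more precisely when its stable base locus maps non-dominantly to each factor.

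The crucial input is Bayer–Macrì's description of the nef cone of $S^{[2]}$ for $S$ a K3 surface of Picard rank $1$ with $H^2 = 2d$, obtained via Bridgeland stability conditions and wall-crossing: they show that the nef cone of $S^{[2]}$ is cut out by the Beauville–Bogomolov form against an extremal class determined by a solution to a Pell-type equation associated to $2d$, and the extremal ray has the shape $H_2 - \mu B$ for an explicit $\mu = \mu(d)$ whose reciprocal (or suitable algebraic combination) produces exactly the radical $\sqrt{4/d + 5/d^2}$ after the dictionary with $H[2]$ and $E$ is applied. Concretely, I would: (i) recall the Bayer–Macrì formula for the extremal nef class on $S^{[2]}$ in terms of the discriminant/Pell data of the lattice $\langle H\rangle \oplus \mathbb Z B$ with $B^2 = -2$; (ii) translate between $S^{[2]}$ and $W'$, tracking how $\pi_i \circ \alpha$ interacts with the Hilbert–Chow contraction so that "not dominant to $X_i$" on $W'$ corresponds to the contracted locus of the morphism on $S^{[2]}$ defined by $H_2 - tB$; (iii) conclude that for $s$ strictly larger than the stated bound the divisor $sH[2] - E$ is semiample (or at least its stable base locus lies over a non-dominant locus), so $\delta(S,H)$ is at most that bound; (iv) feed this into Theorem~\ref{theo:generalintro}.

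The main obstacle I anticipate is step (ii): making the correspondence between $W'$ and $S^{[2]}$ precise at the level of divisor classes and, more delicately, at the level of stable base loci and the dominance condition on the two projections. One must check that a component of $\SB(sH[2]-E)$ dominating both factors of $S \times S$ corresponds under the double cover to a component of the stable base locus of the descended class on $S^{[2]}$ that is \emph{not} contracted by the Hilbert–Chow morphism — and that the converse holds — so that the threshold computed from the nef cone of $S^{[2]}$ genuinely equals (a multiple of) $\delta(S,H)$. There is also a bookkeeping subtlety about the exact rational multiple relating $E$ on $W'$ to $B$ on $S^{[2]}$ (a factor of $2$), which is precisely what turns the Bayer–Macrì wall into the constant $4\sqrt{4/d+5/d^2}$ rather than something off by a power of $2$; getting this constant right is where the arithmetic of the Pell equation enters and must be done carefully. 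Once the dictionary is set up, the estimate itself should follow formally from Bayer–Macrì plus Theorem~\ref{theo:generalintro}.
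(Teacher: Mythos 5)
Your overall strategy --- pass to $\mathrm{Hilb}^{[2]}(S)$, invoke Bayer--Macr\`i's Pell-equation description of its nef cone, and feed the resulting bound into the general counting theorem --- is the same as the paper's. But there are three concrete problems. First, your target for the $\delta$-invariant is off by a factor of $2$: Theorem~\ref{theo:generalintro} gives the exponent $2n\,\delta(S,H)=4\,\delta(S,H)$ for $n=2$, so to obtain $4\sqrt{4/d+5/d^2}$ you need $\delta(S,H)\leq\sqrt{4/d+5/d^2}$, not $\tfrac12\sqrt{4/d+5/d^2}$ (the latter would give exponent $2\sqrt{4/d+5/d^2}$, which is \emph{not} what Bayer--Macr\`i yields). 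Relatedly, the ``bookkeeping factor of $2$'' you worry about between $E$ and $B$ does not exist in the paper's normalization: $B$ is defined so that $2B$ is the Hilbert--Chow exceptional divisor, and then $f^*B=E$ and $f^*H(2)=H[2]$ exactly, so $f^*(sH(2)-B)=sH[2]-E$ with no correction.

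Second, the obstacle you flag as the main difficulty --- matching stable base loci and the dominance condition across the degree-two cover $W'\to\mathrm{Hilb}^{[2]}(S)$ --- is real if you insist on working with $\delta$ directly, but the paper sidesteps it entirely by using the $s$-invariant: $\delta(S,H)\leq s(S,H)$ (Proposition~\ref{prop:s-invariant}), and since $f$ is finite surjective, $sH[2]-E$ is nef if and only if $sH(2)-B$ is nef. This reduces everything to the nef cone of $\mathrm{Hilb}^{[2]}(S)$ with no analysis of base loci or of which components dominate the factors; your proposal should be rerouted through $s(S,H)$ rather than attempting the stable-base-locus dictionary. Third, the theorem bounds $N(S,H,T)$ on \emph{all} of $S$, whereas Theorem~\ref{theo:generalintro} only controls $N(U,H,T)$ for some open $U$; your proposal never explains how to remove the exceptional set. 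The $s$-invariant route fixes this too (Remarks~\ref{rema:s-inv} and~\ref{rema:s}): for $s$ above the threshold the divisor $sH[2]-E$ is ample, its base locus is empty, and one may take $U=S$. Without that, you would be left with an uncontrolled union of curves on $S$ which need not have finitely many rational points.
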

The existence of K3 surfaces satisfying the assumptions of the above theorem is justified by a series of papers \cite{Tera85}, \cite{Ell04}, and \cite{vL07}.
Our proof of the above theorem is relied on the work of \cite{BM14} on the nef cone of the Hilbert scheme of $2$ points $\mathrm{Hilb}^{[2]}(S)$. 
Indeed, $\delta(S, H)$ is bounded by the $s$-invariant of $H$, and the computation of the $s$-invariant can be done using the description of the nef cone of $\mathrm{Hilb}^{[2]}(S)$. 
A certain bound is also obtained for Enriques surfaces by using the work \cite{Nuer}.
See Theorem~\ref{theo:enriques}.
It would be interesting to compute these invariants for surfaces of general type. Bounds of this type are obtained for hypersurfaces in $\mathbb P^n$ by Heath-Brown \cite{HB02}.

For some $3$ dimensional Fano conic bundles we can improve bounds of Theorem~\ref{theo:generalintro} using conic bundle structures.

\begin{theo}
\label{theo:mainintro}
Let $f: X \rightarrow S$ be a conic bundle defined over a number field $k$ with a rational section.
We assume that $X$ and $S$ are smooth Fano. 
Let $W = X \times X$ and $W'$ be the blow up of $W$ along the diagonal with the exceptional divisor $E$. We denote each projection $W' \rightarrow X_i$ by $\pi_i$. 
Let $\alpha, \beta$ be positive real numbers such that $2\alpha- 2\beta = 1$. We further make the following assumptions:
\begin{enumerate}
\item Weak Manin's conjecture for $(S, -K_S)$ holds,
\item for any component $V$ of the stable locus of the following divisor
\[
-\alpha K_{X/S}[2] - \beta f^*K_S[2] -E,
\]
such that $V$ is not contained in $E$, one of projections $\pi_i|_V$ is not dominant.
\end{enumerate}
Then there exists a non-empty Zariski open subset $U \subset X$ such that for any $\epsilon > 0$, there exists $C = C_\epsilon >0$ such that
\[
N(U, -K_X, T) < CT^{2\alpha +  \epsilon}.
\]
\end{theo}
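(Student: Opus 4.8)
The plan is to deduce Theorem~\ref{theo:mainintro} by fibering $X$ over $S$ and counting rational points fiber by fiber, using the hypotheses to control each piece. Let me sketch the argument.

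\medskip

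\textbf{Step 1: Stratify by the base.} Fix a big and nef height function $H_{-K_S}$ on $S$ and $H_{-K_X}$ on $X$. Since $f$ has a rational section and $X,S$ are smooth Fano, over a dense open $S^\circ \subset S$ the conic bundle is a genuine $\mathbb P^1$-bundle (or admits a section), and one has $-K_X = -K_{X/S} + f^*(-K_S)$. For $P \in X(k)$ lying over $Q = f(P) \in S(k)$, the height decomposes up to bounded factors: $H_{-K_X}(P) \asymp H_{-K_{X/S}}(P) \cdot H_{-K_S}(Q)$, where $H_{-K_{X/S}}$ restricts on the fiber $X_Q \cong \mathbb P^1$ to a height of degree $2$. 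So if $H_{-K_X}(P) \le T$ and $H_{-K_S}(Q) = B$, then $P$ is an element of $X_Q(k)$ of fiberwise height $\lesssim T/B$. The plan is to split the count as
\[
N(U, -K_X, T) \ \lesssim \ \sum_{Q \in S^\circ(k),\, H_{-K_S}(Q) \le T} \#\{ P \in X_Q(k) : H_{-K_{X/S}}(P) \lesssim T / H_{-K_S}(Q)\}.
\]

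\textbf{Step 2: Count points in a fiber.} On a smooth conic $X_Q \cong \mathbb P^1_k$ with a degree-$2$ polarization, the number of points of height $\le M$ is $O(M)$ uniformly in $Q$ — but crucially the implied constant depends on $Q$ (on the conic's coefficients). The naive bound $\sum_{Q} T/H_{-K_S}(Q)$ combined with assumption (1), i.e. $\#\{Q : H_{-K_S}(Q) \le B\} = O_\epsilon(B^{1+\epsilon})$, would give $\sum_{H_{-K_S}(Q)\le T}\! T/H_{-K_S}(Q) = O_\epsilon(T^{1+\epsilon})$ via partial summation, which is far too strong to be honest. The resolution, as in McKinnon's approach and the use of $\delta$-type invariants elsewhere in this paper, is that the fiberwise count is \emph{not} uniform: one must account for how often two distinct rational points in the \emph{same} fiber can be close in height. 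This is exactly where assumption (2) enters, via Theorem~\ref{theo:generalintro}'s machinery applied in the relative setting: the divisor $-\alpha K_{X/S}[2] - \beta f^*K_S[2] - E$ on $W'$, with $2\alpha - 2\beta = 1$, is designed so that away from $E$ its stable base locus has no component dominating both factors $X_i$ — meaning pairs $(P_1,P_2)$ of rational points in a common fiber, both of bounded height, are constrained to lie on a proper subvariety.

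\medskip

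\textbf{Step 3: Determinant / pair-counting argument.} I would run the argument of the proof of Theorem~\ref{theo:generalintro} relative to $S$. One counts pairs $(P_1, P_2) \in U(k)^2$ with $f(P_1) = f(P_2) = Q$ and both heights $\le T$; each such pair gives a rational point on $W'$ lying over the locus where $\pi_1 \circ \alpha, \pi_2 \circ \alpha$ agree after composing with $f$ (a relative version of the diagonal's neighborhood), and for which $-\alpha K_{X/S}[2] - \beta f^*K_S[2] - E$ is effectively bounded: indeed $H$ on the pair is bounded by $\asymp T^{2\alpha} \cdot H_{-K_S}(Q)^{-2\beta} \cdot H_{-K_S}(Q)^{\text{(from }E\text{)}}$, and the constraint $2\alpha - 2\beta = 1$ is precisely what makes the exponent of $H_{-K_S}(Q)$ balance so that, after summing $Q$ over $S^\circ(k)$ using assumption (1), one gets a net exponent of $2\alpha$ on $T$ (with the $\epsilon$ coming from assumption (1)). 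Assumption (2) guarantees that, outside a Zariski-closed set that can be removed by shrinking $U$, the pairs contributing are ``trivial'' (essentially $P_1 = P_2$), so that the pair count controls the point count: Cauchy–Schwarz or a direct diagonal extraction gives $N(U, -K_X, T)^2 \lesssim (\text{pair count}) \lesssim T^{2\alpha + \epsilon} \cdot N(U,-K_X,T)$ or an analogous inequality, yielding $N(U, -K_X, T) = O_\epsilon(T^{2\alpha + \epsilon})$.

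\medskip

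\textbf{Main obstacle.} The technical heart — and the step I expect to be hardest — is Step 3: making rigorous the relative version of the height inequality on $W'$ so that the powers of $H_{-K_S}(Q)$ track correctly (one gets a contribution of $+2\beta\cdot\deg$ from $-\beta f^*K_S[2]$ but also a contribution from the exceptional divisor $E$ because two points in the same fiber of a $\mathbb P^1$-bundle over $Q$ are automatically ``diagonal in the $S$-direction''), and then summing over $Q$ with the bound from assumption (1) without losing more than $\epsilon$ in the exponent. A secondary subtlety is handling the bad fibers of $f$ (where the conic degenerates) and the rational, not regular, section: these live over a proper closed subset of $S$, and one shows by induction on dimension or by a separate cruder bound that points over them are negligible, absorbed into the open set $U$.
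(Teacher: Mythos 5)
Your Steps 1 and 2 correctly identify the paper's overall strategy: decompose $-K_X = -K_{X/S} - f^*(-K_S)$, count fiber by fiber, and sum over $S(k)$ using assumption (1) (in the paper this last sum is actually the convergent series $\sum_s H_{-K_S}(s)^{-1-\epsilon}$, obtained from weak Manin's conjecture for $S$ via a Tauberian argument, rather than a partial summation up to height $T$). The gap is in Step 3, where you replace the actual mechanism by a pair-counting/Cauchy--Schwarz scheme that does not work as stated. First, the inequality $N(U,-K_X,T)^2 \lesssim (\text{pair count})$ is false when the pairs are constrained to lie in a \emph{common fiber}: the same-fiber pair count is $\sum_s N_s^2$, and $(\sum_s N_s)^2$ can exceed it by a factor of the number of populated fibers, which is itself of size $T^{O(1)}$; you give no substitute for this step. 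Second, assumption (2) does not imply that contributing pairs are ``essentially $P_1=P_2$.'' What it gives, exactly as in the proof of Theorem~\ref{theo:repulsion}, is a \emph{relative repulsion inequality}: after removing the images of the non-dominant components of the stable base locus, any two distinct rational points $P,Q$ in a common fiber $X_s$ satisfy
\[
\mathrm{dist}_v(P,Q) > C\, H_{-K_S}(s)^{-2\beta}\,\bigl(H_{-K_{X/S}}(P)H_{-K_{X/S}}(Q)\bigr)^{-\alpha},
\]
and the relation $2\alpha-2\beta=1$ makes the right-hand side $\asymp T^{-2\alpha}H_{-K_S}(s)$ when both anticanonical heights are at most $T$.

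The paper then converts this into a count not by Cauchy--Schwarz but by a sphere-packing argument inside each fiber: the $v$-adic balls of radius $\tfrac12 C T^{-2\alpha}H_{-K_S}(s)$ around the points of $A_T\cap X_s$ are pairwise disjoint, each has fiberwise Tamagawa volume $\gg \mathrm{dist}_v(\Delta_f,s)\,T^{-2\alpha}H_{-K_S}(s)$ (Lemma~\ref{lemm:smallballinfamily}), and their union has volume at most $\tau_v(X_s)\ll_\epsilon \mathrm{dist}_v(\Delta_f,s)^{1-\epsilon}$ (Lemma~\ref{lemm:tamagawainfamily}); dividing gives $N(U\cap X_s,-K_X,T)\ll T^{2\alpha+m\epsilon}H_{-K_S}(s)^{-1}$. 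The uniformity issues you flag as the ``main obstacle'' --- the dependence of the fiberwise constant on $s$ and the degeneration near the discriminant --- are precisely what these two lemmas resolve, by tracking the factor $\mathrm{dist}_v(\Delta_f,s)$ on both sides so that it cancels up to $H_{\Delta_f,v}(s)^{\epsilon}\ll H_{-K_X}(P)^{m\epsilon}$. One further point you omit: to apply the small-ball lemma one needs the radii $T^{-2\alpha}H_{-K_S}(s)$ to tend to $0$ uniformly over populated fibers, which the paper deduces from the bigness of $-2\alpha K_X+f^*K_S$, itself a consequence of assumption (2). So the skeleton of your proposal is right, but the heart of the proof --- repulsion plus volume packing in the fibers, with the discriminant-distance bookkeeping --- is missing and is not recoverable from the Cauchy--Schwarz framework you propose.
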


Using this theorem, new upper bounds for $28$ deformation types of Fano $3$-folds are obtained and these bounds are better than the dimension growth conjecture in \cite{BHBS06} and \cite{Sal07}. These examples are discussed in Section~\ref{sec:Fanoconic}. Here are some examples of Fano $3$-folds which our theorem applies to. Note that for examples below we have $\delta(X, -K_X) =1/2$.

\begin{exam}[Example~\ref{exam:no31}]
Let $X$ be the blow-up of a quadric threefold $Q$ defined over a number field $k$ with center a line defined over the same ground field. Let $H$ be the pullback of hyperplane class from $Q$ and we denote the exceptional divisor by $D$. Then the linear system $|H-D|$ defines a $\mathbb P^1$-fibration over $\mathbb P^2$. We prove that $\alpha = 5/6$ satisfies the assumptions of Theorem~\ref{theo:mainintro}: thus we may conclude there exists some open subset $U \subset X$ such that for any $\epsilon > 0$, we have
\[
N(U, -K_X, T) = O_\epsilon(T^{5/3 + \epsilon}).
\]
\end{exam}

\begin{exam}[Example~\ref{exam:no23}]
Let $V_7$ be the blow-up of $\mathbb P^3$ at a point $P$. This is isomorphic to $\mathbb P(\mathcal O \oplus \mathcal O(1))$ over $\mathbb P^2$. Let $X$ be the blow-up of $V_7$ with center the strict transform of a conic passing through $P$. Then $X$ is a Fano conic bundle with singular fibers. We prove that $\alpha = 5/6$ satisfies the assumptions of Theorem~\ref{theo:mainintro}: thus we may conclude there exists some open subset $U \subset X$ such that for any $\epsilon > 0$, we have
\[
N(U, -K_X, T) = O_\epsilon(T^{5/3 + \epsilon}).
\]

\end{exam}

\begin{exam}[Example~\ref{exam:no6}]
Let $X$ be the blow-up of $\mathbb P^3$ with center a disjoint union of three lines. Then $X$ is a Fano conic bundle with singular fibers. We prove that $\alpha = 1$ satisfies the assumptions of Theorem~\ref{theo:mainintro}: thus we may conclude there exists some open subset $U \subset X$ such that for any $\epsilon > 0$, we have
\[
N(U, -K_X, T) = O_\epsilon(T^{2 + \epsilon}).
\]

\end{exam}


It is natural to wonder whether $2\alpha < 2 \delta(X, -K_X) \dim X$ holds in general. While we do not have a proof of this inequality, we do not have any counterexample neither. Finally note that for del Pezzo surfaces, there are many better results on bounds of the counting functions, see, e.g., \cite{HB97}, \cite{Bro01}, \cite{BSJ14}, \cite{FLS16}, and \cite{BS18}.





\subsection{The method of proofs}
McKinnon proves weak Manin's conjecture for K3 surfaces using a certain Repulsion principle which he proves assuming Vojta's conjecture. We instead prove a different Repulsion principle using the $\delta$-invariant and this proof does not rely on Vojta's conjecture. Here is our theorem:
\begin{theo}[Repulsion principle]
\label{theo:repulsionintro}
Let $X$ be a normal projective variety defined over a number field $k$. We fix a place $v$ of $k$. Let $A$ be a big $\mathbb Q$-Cartier divisor on $X$. Then for any $\epsilon > 0$ there exists a constant $C= C_\epsilon > 0$ and a non-empty Zariski open subset $U = U(\epsilon) \subset X$ such that we have
\[
\mathrm{dist}_v(P, Q) > C (H_A(P)H_A(Q))^{-(\delta(X, A) + \epsilon)},
\]
for any $P, Q \in U(k)$ with $P \neq Q$, where $\mathrm{dist}_v(P, Q)$ is the $v$-adic distant function on $X$.
\end{theo}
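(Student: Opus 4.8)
The plan is to realize the pair of points $(P,Q)$ with $P\neq Q$ as a single $k$-point on the blow-up $W'=\Bl_\Delta(X\times X)$ lying off the exceptional divisor $E$, and to turn an upper bound for a height of this point into a lower bound for $\mathrm{dist}_v(P,Q)$. Concretely, given $P\neq Q$ in $X(k)$ there is a canonical lift $R=R_{P,Q}\in (W'\setminus E)(k)$ with $\alpha(R)=(P,Q)$, and one has $H_{L[2]}(R)=H_L(P)H_L(Q)$ up to bounded factors by functoriality of heights under $\alpha^*\pi_1^*+\alpha^*\pi_2^*$. The exceptional divisor $E$ is an effective Cartier divisor cut out (locally at $v$) by sections whose $v$-adic size at $R$ measures exactly how close $P$ is to $Q$: after fixing $v$-adic metrics, $\|s_E(R)\|_v$ is comparable to $\mathrm{dist}_v(P,Q)$ for a suitable choice of local coordinates, so it suffices to bound $\|s_E(R)\|_v$ from below.

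The key input is the $\delta$-invariant. Fix $\epsilon>0$ and set $s=\delta(X,A)+\epsilon$. By definition of $\delta$, every component $V$ of the stable base locus $\mathrm{SB}(sA[2]-E)$ not contained in $E$ has the property that some $\pi_i\circ\alpha|_V$ is not dominant onto $X_i$; hence the union $Z$ of the $\alpha\circ\pi_i$-images of all such components is a proper closed subset of $X$, and I take $U=U(\epsilon):=X\setminus Z$ (minus the indeterminacy/non-smooth locus, and intersected with an open set over which heights behave well). For $P,Q\in U(k)$ distinct, the lifted point $R$ avoids every component of $\mathrm{SB}(sA[2]-E)\setminus E$, and of course $R\notin E$, so $R$ lies outside the entire stable base locus of the $\mathbb Q$-divisor class $sA[2]-E$. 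Passing to a sufficiently divisible multiple $m(sA[2]-E)$ which is represented by an honest base-point-free-at-$R$ linear system (or at least a basis of global sections not all vanishing at $R$), Siegel's lemma / the fundamental inequality of heights gives
\[
H_{sA[2]-E}(R)\ \gg\ 1,
\]
i.e. a constant lower bound depending only on the fixed data and on $\epsilon$. Equivalently $H_E(R)\ll H_{A[2]}(R)^{s}=\bigl(H_A(P)H_A(Q)\bigr)^{s}$ up to bounded constants.

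Finally I convert this global height bound into the desired $v$-adic statement. Writing $H_E$ as the product over all places of local factors and bounding the non-$v$ local factors of $H_E(R)$ from below by a positive constant (they are $\geq 1$ up to the metric normalization, since $R$ is a fixed integral point away from finitely many places), the global bound forces the $v$-component $\|s_E(R)\|_v^{-1}$, which is $\asymp \mathrm{dist}_v(P,Q)^{-1}$, to satisfy
\[
\mathrm{dist}_v(P,Q)^{-1}\ \ll\ \bigl(H_A(P)H_A(Q)\bigr)^{\delta(X,A)+\epsilon},
\]
which is exactly the claimed inequality after renaming $\epsilon$. The main obstacle — and the step that needs the most care — is the second paragraph: making precise that avoiding the stable base locus of the $\mathbb Q$-divisor $sA[2]-E$ yields a \emph{uniform} lower bound for $H_{sA[2]-E}(R)$ over all such $R$. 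This requires choosing $m$ with $m(sA[2]-E)$ Cartier and controlling the finitely many sections of $\mathcal O_{W'}(m(sA[2]-E))$ simultaneously (their common zero locus is contained in $\mathrm{SB}$, which $R$ avoids), together with a clean comparison on $W'$ between the $v$-adic size of $s_E$ at $R$ and the actual distance function $\mathrm{dist}_v$ on $X$; both are standard in spirit but must be set up so the implied constants depend only on $X$, $A$, $v$, and $\epsilon$, not on $P,Q$.
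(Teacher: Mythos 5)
Your proposal is correct and follows essentially the same route as the paper: lift $(P,Q)$ to the blow-up $W'$ of the diagonal, use the positivity of heights off the stable base locus of $(\delta(X,A)+\epsilon)A[2]-E$ to get $h_{(\delta+\epsilon)A[2]-E}\geq O(1)$, convert this into $h_{E,v}(P,Q)\leq h_{(\delta+\epsilon)A[2]}(P,Q)+O(1)$, and define $U$ by removing the non-dominant images of the components of the base locus not contained in $E$. The only technical points the paper handles that you elide are that $sA[2]-E$ is an $\mathbb{R}$-divisor (so one writes the stable locus as an intersection of supports of finitely many effective $\mathbb{R}$-linearly equivalent divisors rather than taking a Cartier multiple) and that these divisors are a priori only defined over a finite extension, requiring a Galois-orbit argument to descend $U$ to $k$.
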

Combining this theorem and counting arguments in \cite{Mck11}, we prove Theorem~\ref{theo:generalintro}.

\

Here is the road map of this paper: in Section~\ref{sec:height}, we recall the constructions of height functions and their basic properties.  In Section~\ref{sect:fujita} we recall basic properties and results of the $a$-invariants. In Section~\ref{sec:delta}, we discuss some basic properties of the $\delta$-invariants and compute them for some examples, e.g., del Pezzo surfaces. In Section~\ref{sec:repulsion}, we prove the Repulsion principle for projective varieties (Theorem~\ref{theo:repulsionintro}). In Section~\ref{sect:general} we establish Theorem~\ref{theo:generalintro}. In Section~\ref{sec:K3}, we study K3 surfaces and Enriques surfaces and prove Theorem~\ref{theo:K3}. In Section~\ref{sect:countingonconicbundle}, we prove Theorem~\ref{theo:mainintro}. In Section~\ref{sec:Fanoconic}, we study $3$-dimensional Fano conic bundles using Theorem~\ref{theo:mainintro}. 

\

\noindent
{\bf Acknowledgement}
The author would like to thank Brian Lehmann for helpful conversations, and careful reading and comments on an early draft of this paper. In particular the author thanks Brian for his suggestions regarding a relation of the $\delta$-invariant to the Seshadri constant and the $s$-invariant, and applications of his works to K3 surfaces and Enriques surfaces. The author also would like to thank Takeshi Abe for answering his question regarding the second chern form. The author would like to thank Shigefumi Mori and Shigeru Mukai for teaching him about the work of Matsuki on the cone of curves on Fano $3$-folds. (\cite{Mat95}) The author thanks Tim Browning, Zhizhong Huang, and Yuri Tschinkel for comments on this paper. The author thanks anonymous referees for suggestions to improve the exposition of the paper.
Sho Tanimoto is partially supported by MEXT Japan, Leading Initiative for Excellent Young Researchers (LEADER), Inamori Foundation, and JSPS KAKENHI Early-Career Scientists Grant numbers 19K14512.

\section{Height functions}
\label{sec:height}

Here we recall the constructions of height functions and their basic properties which will be needed for the rest of the paper. The main references are \cite{Silver00} as well as \cite{volume}.
Let $k$ be a number field and $M_k$ denote the set of places of $k$. For each place $v \in M_k$, $k_v$ denotes its completion with respect to $v$, and we fix a haar measure $\mu_v$ on $k_v$. We normalize our absolute value $|\cdot|_v$ on $k_v$ by the following property: for any $a \in k_v$ and a measurable set $\Omega \subset k_v$ we have
\[
\mu_v(a\Omega) = |a|_v \mu_v(\Omega). 
\]
When $k_v = \mathbb R$, $|\cdot|_v$ is the usual absolute value. When $k_v = \mathbb Q_p$ we have $|p|_v = 1/p$. For a finite extension $k_v/\mathbb Q_w$, we have $|a|_v = |N_{k_v/\mathbb Q_w}(a)|_w$ for any $a \in k_v$. Due to the normalizations, we have the product formual, i.e., for any $a \in k^\times$ we have
\begin{equation}
\label{eqn:product}
\prod_{v \in M_k} |a|_v = 1.
\end{equation}

A variety $X$ defined over $k$ is a geometrically integral separated scheme of finite type over $k$.
For any place $v \in M_k$ the topological space $X(k_v)$ is endowed with a natural structure as an analytic space over $k_v$. For any invertible sheaf $L$ on $X$, we consider the underlying line bundle $\pi : \underline{L} \rightarrow X$.

\begin{defi}[Height functions]
Let $X$ be a projective variety defined over $k$ and $L$ be an invertible sheaf on $X$. Let $\{\|\cdot\|_v\}_{v\in M_k}$ be an adelic metric for $L$, i.e., for each $v \in M_k$, $\|\cdot\|_v$ is a $v$-adic metric on the analytic line bundle $\pi: \underline{L}(k_v) \rightarrow X(k_v)$ and they satisfy a certain integral condition. (See \cite[Section 2.2.3]{volume} for more details.) For each $P \in X(k)$, we pick a non-zero element $\ell$ of $\underline{L}_P(k)$ where $\underline{L}_P$ is a fiber of $\pi : \underline{L} \rightarrow X$ at $P \in X$. We define the multiplicative height function associated to $\mathcal L = (L, \|\cdot\|_v)$ by
\[
H_{\mathcal L}(P) = \prod_{v \in M_k} \|\ell\|_v^{-1}.
\]
Note that this does not depend on the choice of $\ell$ because of the product formula (\ref{eqn:product}). We also define the logarithmic height function by
\[
h_{\mathcal L}(P) = \log H_{\mathcal L}(P).
\]
\end{defi}

\begin{rema}
There is another construction of height functions using the framework of Weil height machines (\cite[Theorem B.3.2]{Silver00}). One can show that two constructions are equivalent in the sense that two height functions associated to the same line bundle are equal up to a bounded function. (See \cite[Theorem B.10.7]{Silver00}) Also note that the construction of height functions in this paper uses normalizations which differ from ones in \cite{Silver00}. This is because in this paper we only consider height functions defined on the set of rational points while \cite{Silver00} considers the height functions defined over the set of algebraic points. Thus in \cite{Silver00}, one needs to normalize each height function by the degree of the definition field of an algebraic point.
\end{rema}

\begin{rema}
In the later discussions, we frequently omit the discussion of metrics and we consider a height function $h_L$ associated to a line bundle $L$. In this situation, we implicitly make a choice of an adelic metric, but we will not make this dependence explicit as this does not matter for our discussion.
\end{rema}

There are two important properties of height functions we frequently use in this paper:

\begin{theo}{\cite[Theorem B.3.2]{Silver00}}
Let $X$ be a projective variety defined over $k$ and $L$ be an invertible sheaf on $X$.
\begin{enumerate}
\item (Positivity) Let $B$ be the stable base locus of $L$. Then we have
\[
h_L(P) \geq O(1) 
\]
for any $P \in (X \setminus B)(k)$.
\item (the Northcott property)
Suppose that $L$ is ample. Then for any $T > 0$ the set
\[
\{P \in X(k) \in H_L(P) \leq T\}
\]
is finite.
\end{enumerate}
\end{theo}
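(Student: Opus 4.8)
\emph{Proof proposal.} The plan is to reduce both assertions to classical facts about heights on $\bP^{N}$, using only the defining properties of the adelic‑metric height $h_{\mathcal L}$. Two preliminary observations will be used throughout. First (\emph{functoriality}): if $g\colon Y\to X$ is a morphism of projective $k$‑varieties and $\mathcal L=(L,\{\|\cdot\|_{v}\})$ is a metrized line bundle on $X$, then equipping $g^{*}L$ with the pullback adelic metric gives $h_{g^{*}\mathcal L}=h_{\mathcal L}\circ g$ on $Y(k)$, directly from the definition. Second (\emph{metric independence up to $O(1)$}): two adelic metrics on the same $L$ differ by $\|\cdot\|_{v}=e^{-\phi_{v}}\|\cdot\|'_{v}$ with $\phi_{v}$ continuous on the compact space $X(k_{v})$ and $\phi_{v}\equiv 0$ for all but finitely many $v$ (this is part of the integral condition, see \cite[Section 2.2.3]{volume}), so $h_{\mathcal L}-h_{\mathcal L'}=\sum_{v}\phi_{v}$ is a bounded function; since both conclusions are insensitive to bounded perturbations, we may choose a convenient metric at each step.

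For part (1), recall that the stable base locus stabilizes: there is $m_{0}>0$ with $B=\mathrm{SB}(L)=\mathrm{Bs}(|m_{0}L|)$, and since $h_{m_{0}L}=m_{0}\,h_{L}$ for the $m_{0}$‑th tensor‑power metric it suffices to bound $h_{m_{0}L}$ from below on $(X\setminus B)(k)$. Fix a basis $s_{1},\dots,s_{r}$ of $H^{0}(X,m_{0}L)$; the opens $X_{j}=\{s_{j}\neq 0\}$ cover $X\setminus B$. For $P\in X_{j}(k)$ we may take $\ell=s_{j}(P)\neq 0$ in the definition of the height, so $h_{m_{0}L}(P)=\sum_{v}\log\|s_{j}(P)\|_{v}^{-1}$. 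Now $\|s_{j}(P)\|_{v}\le\sup_{X(k_{v})}\|s_{j}\|_{v}=:C_{v,j}<\infty$ by continuity and compactness of $X(k_{v})$, and $C_{v,j}=1$ for all but finitely many $v$ by the integral condition on the metric; hence $h_{m_{0}L}(P)\ge-\sum_{v}\log C_{v,j}=O(1)$ on $X_{j}(k)$. Taking the minimum of these finitely many bounds gives $h_{L}(P)\ge O(1)$ for every $P\in(X\setminus B)(k)$.

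For part (2), pick $m>0$ with $mL$ very ample, so that there is a closed immersion $\iota\colon X\hookrightarrow\bP^{N}$ with $\iota^{*}\cO_{\bP^{N}}(1)\cong mL$. By functoriality and metric independence, $m\,h_{L}=h_{\cO(1)}\circ\iota+O(1)$ on $X(k)$, so $\{P\in X(k)\mid H_{L}(P)\le T\}$ is carried by $\iota$ into $\{Q\in\bP^{N}(k)\mid H_{\cO(1)}(Q)\le T'\}$ for a suitable $T'=T'(T)$. As $\iota$ is injective, it remains to invoke the classical finiteness statement for the standard height on $\bP^{N}$: representing $Q$ by a primitive integral vector, a height bound bounds the archimedean sizes of all conjugates of the coordinates while the (fractional) ideal they generate has bounded norm, leaving finitely many choices up to a unit, and Dirichlet's unit theorem controls the remaining ambiguity; hence the set is finite, and therefore so is the sublevel set on $X$.

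This result is foundational, so there is no genuinely hard step; the only real content is the bookkeeping in the two preceding paragraphs, all of which rests on the compactness of the spaces $X(k_{v})$ together with the ``trivial at almost all places'' clause in the definition of an adelic metric — precisely the inputs that make every error term an $O(1)$. In an actual write‑up one would simply cite \cite[Theorem B.3.2]{Silver00}, together with \cite[Theorem B.10.7]{Silver00} for the comparison between the Weil‑height‑machine construction and the adelic‑metric construction used here.
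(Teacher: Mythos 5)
The paper does not actually prove this statement: it is quoted verbatim as \cite[Theorem B.3.2]{Silver00}, with the understanding (made explicit in the remark that follows it) that the Weil-height-machine heights of \cite{Silver00} agree with the adelic-metric heights used here up to $O(1)$ by \cite[Theorem B.10.7]{Silver00}. So your proposal is not comparable to an argument in the paper; it is a self-contained replacement for the citation, carried out directly in the adelic-metric framework of Section 2, and it is correct in outline. Part (1) is the standard argument: stabilize the base locus as $\mathrm{SB}(L)=\mathrm{Bs}(|m_0L|)$, cover $X\setminus B$ by the non-vanishing loci of a $k$-basis of $H^0(X,m_0L)$, and bound $\sum_v\log\|s_j(P)\|_v$ above by $\sum_v\log\sup\|s_j\|_v$; the one point you gloss is why $\sup_{X(k_v)}\|s_j\|_v=1$ for almost all $v$ --- this is not literally "the integral condition" but the fact that the metric is a model metric at almost all places and that the $s_j$ extend to sections of the model after enlarging the finite set $S$ of bad places, which is where the finiteness of the sum comes from. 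Part (2) correctly reduces to Northcott for $\cO(1)$ on $\bP^N$ via a very ample multiple, and your sketch of the classical finiteness (ideal-class normalization of the homogeneous coordinates plus Dirichlet's unit theorem) is the standard one over a general number field. In a write-up consistent with the paper one would still simply cite \cite[Theorems B.3.2 and B.10.7]{Silver00}, but nothing in your argument is wrong.
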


Finally we recall the construction of local height functions:

\begin{defi}
Let $X$ be a projective variety defined over $k$ and $L$ be a Cartier divisor on $X$. For a place $v \in M_k$ we fix a $v$-adic metric $\|\cdot\|_v$ for $L$, i.e., a $v$-adic metric on the analytic line bundle $\pi : \underline{\mathcal O(L)}(k_v) \rightarrow X(k_v)$. Let $D$ be an effective divisor linearly equivalent to $L$. Let $\mathsf s_D$ be a $k$-section associated to $D$. Then the multiplicative local height function associated to $D$ is given by
\[
H_{D, v}(P) = \|\mathsf s_D(P)\|_v^{-1}
\]
for any $P\in (X \setminus D)(k_v)$. We also define the logarithmic local height function by
\[
h_{D, v}(P) := \log H_{D, v}(P).
\]
\end{defi}
Suppose we fix an adelic metrized line bundle $\mathcal L$.
Then the height function is the Euler product of local height functions, i.e., we have
\[
H_{\mathcal L}(P) = \prod_{v \in M_k} H_{D, v}(P)
\]
for any $P \in (X \setminus D)(k)$.

\section{The Fujita invariant in Manin's conjecture}
\label{sect:fujita}

Here we assume that our ground field $k$ is a field of characteristic zero,
but not necessarily algebraically closed.
Recently the geometric study of Fujita invariants has been conducted in a series of papers \cite{HTT15}, \cite{LTT14}, \cite{HJ16}, \cite{LTDuke}, \cite{LT17}, \cite{Sen17}, \cite{LST18}, \cite{LT18}, \cite{LT18-b}. We recall its definition here.
\begin{defi}
Let $X$ be a smooth projective variety defined over $k$. Let $L$ be a big and nef $\mathbb Q$-divisor on $X$. We define the {\it Fujita invariant} (or $a$-invariant) by
\[
a(X, L) = \inf\{ t \in \mathbb R \mid K_X + tL \in \overline{\mathrm{Eff}}^1(X)\},
\]
where $\overline{\mathrm{Eff}}^1(X)$ is the cone of pseudo-effective divisors on $X$. By \cite{BDPP} $a(X, L)>0$ if and only if $X$ is geometrically uniruled. When $L$ is not big, we simply set $a(X, L) = +\infty$. When $X$ is singular, we take a resolution $\beta: X' \rightarrow X$ and we define the Fujita invariant by
\[
a(X, L) := a(X', \beta^*L).
\]
This is well-defined because the Fujita invariant is a birational invariant (\cite[Proposition 2.7]{HTT15}).
\end{defi}

This invariant plays a central role in Manin's conjecture. For example, one can predict the exceptional set of Manin's conjecture by studying this invariant and the following result is a consequence of Birkar's celebrated papers \cite{birkar16} and \cite{birkar16b}:

\begin{theo}[\cite{LTT14}, \cite{HJ16}, \cite{LT17}]
\label{theo:closed}
Assume that our ground field is algebraically closed.
Let $X$ be a smooth projective uniruled variety and let $L$ be a big and nef $\mathbb Q$-divisor on $X$. Let $V$ be the union of subvarieties $Y$ with $a(Y, L)>a(X, L)$. Then $V$ is a proper closed subset of $X$.
\end{theo}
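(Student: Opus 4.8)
The plan is to combine the boundedness of Fano-type fibrations coming from the a-invariant (a consequence of Birkar's proof of the Borisov–Alexeev–Borisov conjecture, \cite{birkar16}, \cite{birkar16b}) with a Noetherian induction on the dimension of $X$. First I would recall the key geometric input: if $Y \subset X$ is a subvariety with $a(Y,L) > a(X,L)$, then after passing to a resolution $\widetilde{Y}$, the pair $(\widetilde{Y}, \frac{1}{a(X,L)} L|_{\widetilde Y})$ has a-invariant strictly bigger than $1$, and by running the minimal model program for $K_{\widetilde Y} + \frac{1}{a(X,L)} L|_{\widetilde Y}$ one obtains a rational fibration whose general fiber $F$ is of Fano type with $\frac{1}{a(X,L)} L|_F$ bounded below; one then invokes Birkar's theorem to conclude that such fibers lie in a bounded family. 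This is the mechanism by which the family of "large a-invariant" subvarieties, a priori only a countable union, is controlled.

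Second, I would set up the actual argument following \cite{LTT14}, \cite{HJ16}, \cite{LT17}. One considers the universal family over the (countably many) components of the Chow variety parametrizing subvarieties $Y$ with $a(Y, L) > a(X,L)$. The goal is to show that the image of this universal family in $X$ is not dense, equivalently that its closure is a proper closed subset. Using lower semicontinuity of the a-invariant in families (established in the cited papers via the Banach–Saks-type arguments and the behavior of the pseudo-effective cone under specialization, again relying on \cite{birkar16}), one reduces to finitely many families of subvarieties, and for each such family one asks whether the evaluation map to $X$ is dominant. If for every such family it is \emph{not} dominant, then $V$ is a finite union of proper closed subsets, hence proper closed, and we are done.

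Third — and this is where I expect the main obstacle — one must rule out the possibility that some family of subvarieties with $a(Y,L) > a(X,L)$ dominates $X$. The key point is a rigidity/monotonicity statement: if a family of subvarieties $Y_t$ with $a(Y_t, L) = a' > a(X,L)$ covers $X$, then by a general-position argument one can pass through a general point of $X$ a subvariety $Y$ on which $K_Y + a' L|_Y$ is pseudo-effective but $K_X + a' L$ restricted there is "more positive" than it should be, and one derives a contradiction with the definition of $a(X,L)$ as an infimum over the whole of $X$ — roughly, covering families cannot increase the a-invariant because the pseudo-effective threshold on $X$ dominates the generic threshold on members of a covering family. Making this precise requires the adjunction-type comparison between $K_X + tL$ and $K_Y + tL|_Y$ for $Y$ moving in a covering family, together with the fact (from \cite{BDPP}) that pseudo-effectivity can be tested against covering families of curves; this is the technical heart and is exactly what the cited papers establish using Birkar's results. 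I would therefore structure the write-up as: (i) reduce to finitely many families by semicontinuity; (ii) for each family, dichotomy dominant/non-dominant; (iii) exclude the dominant case by the covering-family monotonicity of the a-invariant; (iv) conclude that $V$ is a finite union of proper closed subvarieties.
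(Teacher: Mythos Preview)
The paper does not give its own proof of Theorem~\ref{theo:closed}; the statement is quoted from \cite{LTT14}, \cite{HJ16}, \cite{LT17} as a known consequence of Birkar's proof of BAB (\cite{birkar16}, \cite{birkar16b}), and the paper moves on immediately. So there is nothing in the present paper to compare your proposal against.

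That said, your outline is an accurate high-level summary of the strategy in those references: boundedness of the adjoint-rigid pieces via BAB, reduction to finitely many families via (lower-)semicontinuity of the $a$-invariant on the Chow/Hilbert scheme, and the exclusion of dominant families by the monotonicity $a(Y,L)\le a(X,L)$ for a general member $Y$ of a covering family. One cautionary remark on your step (iii): the monotonicity for covering families is not proved by a direct adjunction comparison between $K_X+tL$ and $K_Y+tL|_Y$ (the normal bundle term goes the wrong way for that), but rather via the canonical fibration for $K_{\widetilde X}+a(X,L)L$ together with the observation that a covering family must meet a general fiber of this fibration, on which the $a$-invariant agrees with $a(X,L)$; alternatively one uses the characterization of pseudo-effectivity against movable curves from \cite{BDPP}. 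If you intend to write this up rather than cite it, that is the place to be careful.
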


For computations of this exceptional set $V$ for some examples, see \cite{LTT14} and \cite{LT18}. 



\section{The invariant $\delta(X, H)$}
\label{sec:delta}

Here we assume that our ground field $k$ is an algebraically closed field of characteristic $0$.
Let $X$ be a normal projective variety and $H$ be a big $\mathbb Q$-Cartier divisor on $X$. 
We consider $W = X \times X$ and denote each projection by $\pi_i : W \rightarrow X_i$.
Let $\alpha : W' \rightarrow W$ be the blow up of the diagonal and we denote its exceptional divisor by $E$.
For any $\mathbb Q$-Cartier divisor $L$ on $X$ we denote $\alpha^*\pi_1^*L + \alpha^*\pi_2^*L$ by $L[2]$.

\begin{defi}
Let $X, W', E$ as above. We define the following invariant 
\[
\delta(X, H) = \inf\left\{s\in \mathbb R \middle| \begin{matrix}\text{ for any component $V \subset \mathrm{SB}(sH[2]- E)$ not contained in $E$,} \\
\text{one of $\pi_i\circ \alpha|_V$ is not dominant to $X_i$. }
\end{matrix}\right\},
\] 
where $\mathrm{SB}(sH[2]- E)$ is the stable base locus of a $\mathbb R$-divisor $sH[2]- E$.
\end{defi}
\begin{rema}
\label{rema:elementary}
It follows from the definition that when the rational map $\Phi_{|H|}$ associated to $|H|$ is birational, we have $\delta(X, H) \leq 1$. Indeed, let $Z$ be a closed subset such that on $X \setminus Z$, $\Phi_{|H|}$ is well-defined and an isomorphism onto the image. Then one can conclude that
\[
\mathrm{Bs}(H[2] - E) \subset E\cup \pi_1^{-1}(Z) \cup \pi_2^{-1}(Z),
\]  
where $\mathrm{Bs}(H[2] - E)$ is the base locus of $H[2] - E$.
Thus our assertion follows.
Also it follows from the definition that $\delta(X, H)H[2]-E$ is pseudo-effective.
\end{rema}

\begin{lemm}
\label{lemm:coveringcurves}
Let $s_0$ be a positive real number.
Let $F_P$ be a fiber of the first projection $\pi_1\circ \alpha : W' \rightarrow X_1$ at $P \in X_1$.
Suppose that there exists a family of irreducible curves $C_t \subset F_{P(t)}$ such that (i) $P(t)$ covers a Zariski open subset of $X_1$ as $t$ varies (ii) the image of $C_t$ in $X_2$ is an irreducible curve containing $P(t)$ (iii) $(s_0H[2] -E).C_t =0$. Then we have $\delta(X, H)\geq s_0$.
\end{lemm}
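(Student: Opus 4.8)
The plan is to show that for every $s < s_0$ the divisor $sH[2] - E$ fails the defining property of $\delta(X,H)$, which forces $\delta(X,H) \ge s_0$. Concretely, I would argue that for $s < s_0$ there is a component $V$ of $\mathrm{SB}(sH[2]-E)$ that is not contained in $E$ and maps dominantly to both factors $X_1$ and $X_2$ via $\pi_i \circ \alpha$; then by definition $s$ cannot lie in the set over which the infimum is taken, and since this holds for all $s < s_0$ we get $\delta(X,H) \ge s_0$. The curves $C_t$ will be used to produce such a $V$.

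First I would fix $s < s_0$ and examine the intersection number $(sH[2]-E).C_t$. Writing $(s_0 H[2]-E).C_t = 0$ from hypothesis (iii), linearity gives $(sH[2]-E).C_t = (s - s_0)(H[2]).C_t$. Since $H$ is big, $H[2] = \alpha^*\pi_1^*H + \alpha^*\pi_2^*H$ is the pullback of a big (hence, on curves mapping non-trivially to a factor, positively-intersecting) class; because the image of $C_t$ in $X_2$ is an irreducible curve (so $C_t$ is not contracted by $\pi_2 \circ \alpha$, and similarly it sits in a fiber of $\pi_1\circ\alpha$ but still $\pi_1\circ\alpha$ need not contract it — here I should be a little careful: $C_t \subset F_{P(t)}$ means $\pi_1\circ\alpha$ \emph{does} contract $C_t$, so only the $\pi_2$-term contributes), we get $(H[2]).C_t = (\pi_2\circ\alpha)_*C_t \cdot H > 0$. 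Hence $(sH[2]-E).C_t = (s-s_0)(H[2].C_t) < 0$ for $s < s_0$. A curve on which a divisor has negative degree must be contained in the stable base locus of that divisor; therefore every $C_t$ lies in $\mathrm{SB}(sH[2]-E)$.

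Next I would pass from the family $\{C_t\}$ to an irreducible component $V$ of $\mathrm{SB}(sH[2]-E)$ containing infinitely many (a dense set of) the $C_t$. By hypothesis (i) the points $P(t)$ sweep out a Zariski-dense subset of $X_1$, and $C_t \subset F_{P(t)}$, so the union $\bigcup_t C_t$ dominates $X_1$ under $\pi_1\circ\alpha$; thus $V$ dominates $X_1$. Similarly, by hypothesis (ii) the image of $C_t$ in $X_2$ is an irreducible curve through $P(t)$, and as $P(t)$ moves over a dense subset of $X_2$ (identifying $X_1 = X_2 = X$), these image curves sweep out a dense subset of $X_2$; hence $V$ also dominates $X_2$ under $\pi_2\circ\alpha$. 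Finally $V$ is not contained in $E$: a general $C_t$ is not contained in $E$ because its image in $X_2$ is a curve while $E$ maps to the diagonal, and more to the point $C_t \subset F_{P(t)}$ with image an honest curve in $X_2$ means $C_t \not\subset E$ (points of $E$ over $P(t)$ all have second coordinate $P(t)$), so the component $V$ through such $C_t$ is not inside $E$. This produces the desired component, so $s \notin \{s \mid \cdots\}$ for every $s < s_0$, giving $\delta(X,H)\ge s_0$.

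The main obstacle, and the point I would be most careful about, is the bookkeeping in the intersection computation: making sure that $C_t$ being in a fiber of the first projection does not kill the contribution of $H[2]$, which is rescued precisely by hypothesis (ii) guaranteeing $C_t$ is not contracted by the second projection, so that $\alpha^*\pi_2^*H \cdot C_t > 0$. A secondary point is the standard-but-worth-stating fact that a curve on which an $\mathbb{R}$-divisor is negative lies in its stable base locus (equivalently, in the augmented base locus), together with the observation that a Zariski-dense union of such curves forces some component of $\mathrm{SB}$ to dominate both factors. Once these are pinned down the argument is essentially formal from the definition of $\delta$.
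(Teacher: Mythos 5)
Your argument is essentially the paper's proof: the paper sets $V=\overline{\bigcup_t C_t}$, observes that $(sH[2]-E).C_t<0$ for $s<s_0$ forces $V\subset\mathrm{SB}(sH[2]-E)$, notes that $V\not\subset E$ and that both projections restricted to $V$ are dominant, and concludes $s\le\delta(X,H)$ for all $s<s_0$. Your bookkeeping of why only the $\pi_2$-term of $H[2]$ contributes, and why $V$ dominates both factors and is not inside $E$, matches this.

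One step of your justification is not quite right, though the conclusion survives. You deduce $H[2]\cdot C_t>0$ from the bigness of $H$ together with the fact that $\pi_2\circ\alpha$ does not contract $C_t$. But $H$ is only assumed big (not nef) in the definition of $\delta$, and a big divisor can have zero or negative degree on a curve it does not contract (e.g.\ $H+E$ on a blow-up of $\mathbb P^2$ is big but meets the exceptional curve negatively), so ``big, hence positively-intersecting on non-contracted curves'' is false in general. The correct source of positivity is hypotheses (ii)--(iii) themselves: since the image of $C_t$ in $X_2$ is a curve containing $P(t)$, the curve $C_t$ meets $E$ (it passes over the point $(P(t),P(t))$) but is not contained in $E$ (the part of $E$ lying in $F_{P(t)}$ maps to the single point $P(t)\in X_2$), so $E\cdot C_t>0$; then (iii) gives $H[2]\cdot C_t=s_0^{-1}E\cdot C_t>0$, and your computation $(sH[2]-E).C_t=(s-s_0)\,H[2]\cdot C_t<0$ goes through. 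With that repair your proof is complete and coincides with the paper's.
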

\begin{proof}
Let $V = \overline{\cup C_t}$ in $W'$ which is irreducible. Then $V$ is not contained in $E$ and each projection $\pi_i|_{V}$ is dominant. Let $s < s_0$. Then since $(sH[2] -E).C_t<0$, $\mathrm{SB}(sH[2]- E)$ contains $V$. Thus we must have $s \leq \delta(X, H)$. Since this is true for any $s < s_0$, our assertion follows.
\end{proof}

\begin{exam}
Let $X = \mathbb P^n$ and $H$ be the hyperplane class. Then $\delta(X, H) = 1$.
Indeed, it follows from Remark~\ref{rema:elementary} that $\delta(X, H) \leq 1$. On the other hand, let $F_1$ be a general fiber of the first projection $\pi_1\circ \alpha : W' \rightarrow X_1 = \mathbb P^n$ at $P \in X_1$ and $\ell$ be the strict transform of a line passing through $P$ in $F_1$. Then we have $(H[2]- E).\ell = 0$. Thus our assertion follows from Lemma~\ref{lemm:coveringcurves}.

\end{exam}

\begin{exam}
Let $X \subset \mathbb P^n$ be a normal projective variety and $H$ be the hyperplane class. Suppose that $X$ is covered by lines. Then the same proof of the above example shows that $\delta(X, H) = 1$.
\end{exam}

Next we show that the invariant $\delta(X, H)$ is a birational invariant.

\begin{lemm}
\label{lemm:birational}
Let $X$ be a normal projective variety and $H$ be a big $\mathbb Q$-Cartier divisor on $X$. 
Let $\beta: X' \rightarrow X$ be a birational morphism between normal projective varieties. Then we have
\[
\delta(X', \beta^*H) = \delta(X, H).
\]
\end{lemm}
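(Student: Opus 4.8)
The plan is to compare the two "blown-up product" spaces attached to $X$ and $X'$ and show that the stable base loci relevant to the two $\delta$-invariants correspond to each other in a way that respects dominance of the projections. First I would set up notation: write $W = X \times X$, $W' \to W$ the blow-up of the diagonal $\Delta_X$ with exceptional divisor $E$, and projections $\pi_i \circ \alpha : W' \to X_i$; similarly $\widetilde W = X' \times X'$, $\widetilde W' \to \widetilde W$ the blow-up of $\Delta_{X'}$ with exceptional divisor $E'$, and projections $\widetilde\pi_i \circ \widetilde\alpha : \widetilde W' \to X'_i$. The morphism $\beta \times \beta : \widetilde W \to W$ is birational; since $\beta \times \beta$ maps $\Delta_{X'}$ onto $\Delta_X$, by the universal property of blow-up (after possibly passing to the normalization of the strict transform, or just noting $\widetilde W'$ already dominates $W'$ birationally) there is an induced birational morphism $\gamma : \widetilde W' \to W'$ fitting into a commutative square with $\beta \times \beta$, and $\gamma^{-1}$ is defined away from the relevant exceptional loci. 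The key geometric point is that $\gamma$ is an isomorphism over the open set of $W'$ lying over $(X \setminus \Sigma) \times (X \setminus \Sigma)$ where $\beta$ is an isomorphism, $\Sigma$ being the locus where $\beta$ is not an isomorphism.

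Next I would establish the divisor comparison: $\gamma^* (H[2]) = (\beta^*H)[2]$ up to $\beta$-exceptional-type contributions, and more importantly that $\gamma^* E = E' + (\text{effective } \gamma\text{-exceptional or }\beta\text{-pulled-back stuff supported over }\Sigma)$. Because the diagonals match up, the crucial identity is that $\gamma$ pulls $E$ back to $E'$ plus divisors that are supported over $\Sigma \subset X$, i.e. supported on $\widetilde\pi_1^{-1}(\text{exc}) \cup \widetilde\pi_2^{-1}(\text{exc})$. Combining these, one gets $\gamma^*(sH[2] - E) = s(\beta^*H)[2] - E' - G_s$ where $G_s$ is supported over the $\beta$-exceptional locus. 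Since stable base loci behave well under pullback by birational morphisms — $\mathrm{SB}(\gamma^* D) = \gamma^{-1}(\mathrm{SB}(D))$ together with possibly some $\gamma$-exceptional divisors — I can translate a statement about components of $\mathrm{SB}(sH[2]-E)$ dominating $X_i$ into a statement about components of $\mathrm{SB}(s(\beta^*H)[2] - E')$ dominating $X'_i$, because dominance over $X_i$ is equivalent to dominance over $X'_i$ via the birational $\beta$, and any newly introduced exceptional components are supported over $\Sigma$, hence not dominant over the factors — so they never obstruct the infimum on either side. Handling $G_s$ requires care: I would argue either that $G_s$ is contained in $E'$ together with non-dominant pieces, or absorb it by the standard fact that adding an effective divisor supported on non-dominant components does not change which value of $s$ satisfies the defining condition.

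This lets me show the two defining sets of $s$ coincide, giving $\delta(X', \beta^*H) = \delta(X, H)$. I would prove the two inequalities separately: for $\delta(X', \beta^*H) \le \delta(X, H)$, take $s > \delta(X, H)$, so every component of $\mathrm{SB}(sH[2]-E)$ not in $E$ is non-dominant over some $X_i$; pull back via $\gamma$ and check the same holds on $\widetilde W'$ using that $\gamma$ is birational and $\Sigma$-fibers are non-dominant, hence $s \ge \delta(X', \beta^*H)$. For the reverse, take $s > \delta(X', \beta^*H)$ and push forward via $\gamma$: a dominant component of $\mathrm{SB}(sH[2]-E)$ on $W'$ would have strict transform a dominant component of $\mathrm{SB}(s(\beta^*H)[2]-E' - G_s)$ on $\widetilde W'$, and since $G_s$ is non-dominant this forces a dominant component of $\mathrm{SB}(s(\beta^*H)[2]-E')$, contradiction. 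The main obstacle I anticipate is the precise behavior of the blow-up under $\beta \times \beta$ when $\beta$ is not flat — the strict transform of $\Delta_{X'}$ and the scheme-theoretic fiber structure of $E$ over $\Sigma$ — and correspondingly controlling exactly which exceptional divisors appear in $\gamma^* E$ and $G_s$; I would handle this by working with the normalization of the strict transform of $W'$ in $\widetilde W'$ if needed and using that all the extra divisors live over the lower-dimensional set $\Sigma$, so they are automatically non-dominant over the factors and therefore irrelevant to the infimum defining $\delta$.
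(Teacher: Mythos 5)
Your overall strategy is essentially the paper's: compare the two diagonal blow-ups via the birational map induced by $\beta\times\beta$, check that the exceptional divisors and the classes $H[2]$, $(\beta^*H)[2]$ correspond up to effective corrections supported over the $\beta$-exceptional locus (hence non-dominant over both factors), and transport effective divisors in the relevant $\mathbb{R}$-linear equivalence classes in both directions. The paper phrases this with a birational map $\phi\colon W_{X'}\dashrightarrow W_X$ and the identities $\phi^*E_X=E_{X'}$, $\phi_*E_{X'}=E_X$ rather than with a resolved morphism $\gamma$ and a correction term $G_s$, but that is cosmetic; your observation that all extra exceptional divisors live over $\Sigma$ and are therefore irrelevant to the infimum is exactly the point.

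There is, however, one step whose stated justification does not go through: the inequality $\delta(X,H)\le\delta(X',\beta^*H)$. You argue that a dominant component of $\mathrm{SB}(sH[2]-E)$ yields a dominant component of $\mathrm{SB}\bigl(s(\beta^*H)[2]-E'-G_s\bigr)$ and that, since $G_s$ is non-dominant, this ``forces a dominant component of $\mathrm{SB}\bigl(s(\beta^*H)[2]-E'\bigr)$.'' The containment implicitly invoked here, $\mathrm{SB}(A-G)\subset\mathrm{SB}(A)\cup\mathrm{Supp}(G)$ for $G\ge 0$, is false in general: subtracting an effective divisor can create base locus far away from $\mathrm{Supp}(G)$ (already $A$ semiample and $A-G$ non-pseudoeffective gives a counterexample). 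Only the reverse containment $\mathrm{SB}(A)\subset\mathrm{SB}(A-G)\cup\mathrm{Supp}(G)$ holds, and that is the one you correctly use for the other inequality. The repair is the one the paper uses, and which your phrase ``push forward via $\gamma$'' already hints at: take the strict transform $Y'\subset\widetilde W'$ of the offending dominant component $Y$, use the definition of $\delta(X',\beta^*H)$ to produce an effective $D'\sim_{\mathbb{R}}s(\beta^*H)[2]-E'$ with $Y'\not\subset\mathrm{Supp}(D')$, and push forward: since $\gamma_*\gamma^*(sH[2])=sH[2]$ and $\gamma_*E'=E$ (equivalently, $G_s$ is $\gamma$-exceptional, so $\gamma_*G_s=0$), the divisor $\gamma_*D'$ is effective, $\mathbb{R}$-linearly equivalent to $sH[2]-E$, and its support does not contain $Y$ because $Y$ is dominant while the non-isomorphism locus of $\gamma$ is not. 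This contradicts $Y\subset\mathrm{SB}(sH[2]-E)$. With that substitution your argument is complete and coincides with the paper's proof.
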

\begin{proof}
Let $W_X$ be the blow-up of $X\times X$ along the diagonal and $W_{X'}$ be the blow-up of $X'\times X'$ along the diagonal. We denote their exceptional divisors by $E_{X}$ and $E_{X'}$ respectively. Then we have a birational map
\[
\phi: W_{X'} \dashrightarrow W_X
\]
which is a birational contraction and the indeterminacy of this map is not dominant to both $X_i'$.
Also for a component $V$ of the non-isomorphic loci of this map such that $V$ is not contained in $E_{X'}$, one of projections is not dominant.

Fix $\epsilon > 0$. Suppose that the stable locus of $(\delta(X, H) + \epsilon)\beta^*H[2] - E_{X'}$ contains a subvariety $Y \subset W_{X'}$ such that $Y \not\subset E_{X'}$ and $Y$ maps dominantly to both $X'_i$. By the definition, $(\delta(X, H) + \epsilon)H[2] - E_{X}$ does not contain $\phi(Y)$ in the stable locus so that there exists $0\leq D \sim_{\mathbb R} (\delta(X, H) + \epsilon)H[2] - E_{X}$ such that $\phi(Y) \not\subset \mathrm{Supp}(D)$.
Then we have $\phi^*D \sim_{\mathbb R} (\delta(X, H) + \epsilon)\beta^*H[2] - E_{X'}$ because $\phi^*E_{X} = E_{X'}$. Furthermore we have $Y \not\subset \mathrm{Supp}(\phi^*D)$. This contradicts with our assumption. Thus we conclude
\[
\delta(X', \beta^*H) \leq \delta(X, H).
\]

Suppose that the stable locus of $(\delta(X', H) + \epsilon)H[2] - E_{X}$ contains a subvariety $Y \subset W_{X}$ such that $Y \not\subset E_{X}$ and $Y$ maps dominantly to both $X_i$. We take the strict transform $Y' \subset W_{X'}$ of $Y$. By the definition, $(\delta(X', H) + \epsilon)\beta^*H[2] - E_{X'}$ does not contain $Y'$ in the stable locus so that there exists $0\leq D \sim_{\mathbb R} (\delta(X', H) + \epsilon)\beta^*H[2] - E_{X'}$ such that $Y' \not\subset \mathrm{Supp}(D)$.
Then we have $\phi_*D \sim_{\mathbb R} (\delta(X', H) + \epsilon)H[2] - E_{X}$. Furthermore we have $Y \not\subset \mathrm{Supp}(\phi_*D)$. This contradicts with our assumption. Thus we conclude
\[
\delta(X', \beta^*H) \geq \delta(X, H).
\]
Thus our assertion follows.
\end{proof}

Here is a relation between $\delta(X, H)$ and $a(X, H)$.

\begin{prop}
\label{prop:relation}
Let $X$ be a smooth weak Fano variety, i.e., $-K_X$ is big and nef, and $H$ be a big and nef divisor on $X$.
Then we have
\[
\delta(X, H) \leq a(X, H)\delta(X, -K_X).
\]
\end{prop}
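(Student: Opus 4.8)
The plan is to reduce everything to the anticanonical case $\delta(X,-K_X)$ by writing a small perturbation $(a(X,H)+\epsilon')H$ of $a(X,H)H$ as $-K_X$ plus an effective divisor, and then tracking the effect on stable base loci on $W'$. Write $a=a(X,H)$ and $\delta_0=\delta(X,-K_X)$; since $X$ is uniruled (being weak Fano) we have $a>0$.

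First I would fix $\epsilon'>0$ and record the bigness of $G:=(a+\epsilon')H+K_X$: by definition of the Fujita invariant $aH+K_X$ is pseudo-effective, $\epsilon'H$ is big because $H$ is big and nef, and the sum of a pseudo-effective divisor and a big divisor is big. Hence $G$ is big, so $G\sim_{\mathbb R}G_0$ for some effective $\mathbb R$-divisor $G_0\geq 0$, whose support is a proper closed subset of $X$. Applying $\pi_i^*$ and then $\alpha^*$, and using that $(\cdot)[2]$ is additive and sends effective divisors to effective divisors, this gives on $W'$
\[
(a+\epsilon')\,H[2]\ \sim_{\mathbb R}\ (-K_X)[2]+(G_0)[2],\qquad (G_0)[2]:=\alpha^*\pi_1^*G_0+\alpha^*\pi_2^*G_0\ \geq\ 0 .
\]

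Next, given $\epsilon>0$, I would pick (by definition of the infimum) a real number $s''\in[\delta_0,\delta_0+\epsilon)$ lying in the set defining $\delta(X,-K_X)$; necessarily $s''>0$. Scaling the displayed relation by $s''$ and subtracting $E$ yields
\[
s''(a+\epsilon')\,H[2]-E\ \sim_{\mathbb R}\ \bigl(s''(-K_X)[2]-E\bigr)+s''(G_0)[2],
\]
and since $s''$ lies in the defining set, $s''(-K_X)[2]-E$ is $\mathbb R$-linearly equivalent to an effective divisor (otherwise $W'$ itself would be a component of its stable base locus, which dominates both $X_i$ — impossible). As the second summand is effective, I get
\[
\mathrm{SB}\bigl(s''(a+\epsilon')H[2]-E\bigr)\ \subseteq\ \mathrm{SB}\bigl(s''(-K_X)[2]-E\bigr)\ \cup\ \mathrm{Supp}\bigl((G_0)[2]\bigr).
\]
Now let $V$ be a component of the left-hand side not contained in $E$. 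If $V$ lies in a component $V'$ of $\mathrm{SB}(s''(-K_X)[2]-E)$, then $V'$ is not contained in $E$ either, so by the defining property of $s''$ one of $\pi_i\circ\alpha|_{V'}$ is not dominant, hence neither is $\pi_i\circ\alpha|_V$. Otherwise $V$ lies in $\mathrm{Supp}((G_0)[2])=(\pi_1\circ\alpha)^{-1}(\mathrm{Supp}\,G_0)\cup(\pi_2\circ\alpha)^{-1}(\mathrm{Supp}\,G_0)$, hence by irreducibility in one of the two pieces, say the first, so $\pi_1\circ\alpha(V)\subseteq\mathrm{Supp}\,G_0\subsetneq X$ and $\pi_1\circ\alpha|_V$ is not dominant. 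In every case one projection fails to be dominant, so $s''(a+\epsilon')$ belongs to the set defining $\delta(X,H)$, giving $\delta(X,H)\leq s''(a+\epsilon')<(\delta_0+\epsilon)(a+\epsilon')$. Letting $\epsilon,\epsilon'\to 0$ proves the claim.

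I do not anticipate a serious obstacle. The two points requiring care are the bigness of $G$ — this is exactly where the nefness of $H$ enters, analogous to the necessity of the nef hypothesis in Conjecture~\ref{conj:weakManin} — and the elementary but essential bookkeeping that $\mathrm{SB}(D+F)\subseteq\mathrm{SB}(D)\cup\mathrm{Supp}(F)$ for $F\geq 0$, together with the observation that any subvariety contained in a pullback $(\pi_i\circ\alpha)^{-1}(Z)$ with $Z\subsetneq X$ cannot dominate $X_i$. One could alternatively work with $\delta_0+\epsilon$ in place of $s''$ after first checking that the defining set of $\delta(X,-K_X)$ is upward closed — which holds because $(-K_X)[2]$ is semiample on $W'$, as $-K_X$ is semiample by the base-point-free theorem — but using an element $s''$ of the defining set directly avoids that monotonicity step.
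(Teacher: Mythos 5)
Your proposal is correct and follows essentially the same route as the paper's proof: write $a(X,H)H+K_X$ as an effective divisor $D$, so that $a(X,H)(\delta(X,-K_X)+\epsilon)H[2]-E$ decomposes as $(\delta(X,-K_X)+\epsilon)(-K_X)[2]-E$ plus the effective divisor $(\delta(X,-K_X)+\epsilon)D[2]$, whose support pulls back from proper closed subsets of the factors. Your extra $\epsilon'$-perturbation (making $G$ big rather than merely pseudo-effective) and the explicit choice of $s''$ in the defining set are minor technical refinements of the same argument, not a different approach.
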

\begin{proof}
We write $a(X, H)H + K_X \sim_{\mathbb Q} D \geq 0$. Fix $\epsilon > 0$. Then we have
\[
a(X, H)(\delta(X, -K_X)+\epsilon)H[2]-E \sim_{\mathbb Q} -(\delta(X, -K_X)+\epsilon)K_X[2] + (\delta(X, -K_X)+\epsilon)D[2] -E.
\]
Thus we see that the stable locus of $|a(X, H)(\delta(X, -K_X)+\epsilon)H[2]-E|$ does not contain any dominant component possibly other than subvarieties in $E$. Thus our assertion follows.
\end{proof}

Next we consider the $s$-invariants and its relation to the $\delta$-invariants:
\begin{defi}
Let $X$ be a smooth projective variety and $H$ be an ample divisor on $X$.
Let $W$ be the blow-up of $X\times X$ along the diagonal and we denote its exceptional divisor by $E$. The $s$-invariant of $H$ is defined by 
\[
s(X, H) = \inf \{s \in \mathbb R | \text{ $sH[2] - E$ is nef}\}.
\]
This is a positive real number in general. See \cite[Section 5.4]{LazI} for many properties of this invariant.
\end{defi}

\begin{prop}
\label{prop:s-invariant}
Let $X$ be a smooth projective variety and $H$ be an ample divisor on $X$.
Then we have
\[
\delta(X, H)\leq s(X, H).
\]
\end{prop}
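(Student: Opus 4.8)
The plan is to deduce this from the very definitions of the two invariants: $s(X,H)$ is an infimum over $s$ such that $sH[2]-E$ is nef, while $\delta(X,H)$ is an infimum over $s$ such that $\mathrm{SB}(sH[2]-E)$ has no dominant component outside $E$. Since nef divisors are in particular pseudo-effective and have empty stable base locus in any reasonable sense, one expects the set of $s$ appearing in the definition of $\delta$ to contain the set of $s$ appearing in the definition of $s(X,H)$, forcing the inequality between infima.

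Concretely, first I would fix $s > s(X,H)$, so that $sH[2]-E$ is a nef $\mathbb{R}$-divisor on $W$ (the blow-up of $X\times X$ along the diagonal). The key point is that for a nef divisor $N$, the stable base locus $\mathrm{SB}(N)$ does not cover $W$ — indeed, more is true, but all I need is that $\mathrm{SB}(sH[2]-E)$ contains no component mapping dominantly to both factors $X_i$. To see this I would use that $H[2] = \alpha^*\pi_1^*H + \alpha^*\pi_2^*H$ with $H$ ample on $X$, so $H[2]$ is nef and big on $W$; hence for $s > s(X,H) \geq 0$ the divisor $sH[2]-E$ is nef, and being a sum of a nef divisor and a positive multiple of the big nef divisor $H[2]$, a slightly smaller multiple $s'H[2]-E$ with $s(X,H) < s' < s$ is nef, so $sH[2]-E = (s'H[2]-E) + (s-s')H[2]$ is nef plus big, hence big. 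A big and nef (indeed, just big) divisor has stable base locus a proper closed subset, and more importantly one can choose the effective representative to avoid any prescribed subvariety that meets the complement; so $\mathrm{SB}(sH[2]-E)$ is a proper closed subset and in particular every $s > s(X,H)$ lies in the defining set for $\delta(X,H)$. Taking infima gives $\delta(X,H) \leq s(X,H)$.

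Actually, the cleanest route avoids the bigness discussion: for $s$ strictly greater than $s(X,H)$, write $s = s_0 + \eta$ with $s_0 > s(X,H)$ and $\eta > 0$; then $s_0 H[2]-E$ is nef and $sH[2]-E = (s_0H[2]-E) + \eta H[2]$ is nef plus big (since $H[2]$ is big), hence big, hence its stable base locus is a proper closed subset, and for any subvariety $Z$ meeting its complement there is an effective $\mathbb{R}$-divisor in the class not containing $Z$. In particular no component of $\mathrm{SB}(sH[2]-E)$ dominates both $X_i$ — because the complement of $\mathrm{SB}$ is dense — so $s$ satisfies the condition defining $\delta(X,H)$. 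Hence $\delta(X,H) \leq s$ for all $s > s(X,H)$, and the proposition follows by letting $s \to s(X,H)$.

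The main obstacle, such as it is, is purely a matter of carefully matching the two definitions: confirming that "$sH[2]-E$ nef" really does imply "$\mathrm{SB}(sH[2]-E)$ contains no dominant-to-both component," which requires knowing $H[2]$ is big on $W$ (this is standard since $H$ is ample on $X$ and $\alpha$ is birational, so $\alpha^*\pi_i^*H$ is nef and their sum is big because it is the pullback of an ample class under the semiample-and-birational-onto-image map $W \to X\times X \hookrightarrow \mathbb{P}^N$, whose image has dimension $2\dim X = \dim W$). Everything else is immediate from the monotonicity of infima over nested conditions. I do not expect any genuine difficulty here; the proposition is essentially a formal consequence of the definitions together with the elementary fact that a big divisor has proper stable base locus.
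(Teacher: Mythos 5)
Your reduction to bigness does not work, and the flaw is in the final inference. For $s>s(X,H)$ you correctly observe that $sH[2]-E$ is big, so $\mathrm{SB}(sH[2]-E)$ is a proper closed subset of $W'$ with dense complement. But the condition in the definition of $\delta(X,H)$ is much stronger than ``$\mathrm{SB}$ is proper'': it requires that every component of $\mathrm{SB}$ not contained in $E$ fail to dominate one of the factors $X_i$. A proper closed subset of $W'$ can easily have a component dominating both factors --- for instance any irreducible divisor $V\subset W'$ not of the form $\pi_i^{-1}(\text{divisor})$ dominates both $X_1$ and $X_2$, and Proposition~\ref{prop:cubic} already exhibits divisors of exactly this kind ($\Delta_{F_1}$) occurring in such stable base loci. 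Your stated justification, ``no component of $\mathrm{SB}$ dominates both $X_i$ because the complement of $\mathrm{SB}$ is dense,'' is a non sequitur: the fact that one can choose an effective representative avoiding any $Z$ that \emph{meets the complement} of $\mathrm{SB}$ says nothing about the components of $\mathrm{SB}$ itself, which by definition do not meet that complement. (Your opening claim that a nef divisor has $\mathrm{SB}\neq W'$ is also false in general, e.g.\ for nef classes with no effective representative, though that is not where the argument breaks.)

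The paper's proof uses a strictly stronger positivity statement that you discarded: for every $\epsilon>0$ the divisor $(s(X,H)+\epsilon)H[2]-E$ is in fact \emph{ample}, so its stable base locus is \emph{empty} and the defining condition for $\delta$ holds vacuously. Ampleness here is the standard fact about the $s$-invariant (cf.\ \cite[Section 5.4]{LazI}): $H[2]$ is the pullback under the blow-up $\alpha$ of the ample divisor $\pi_1^*H+\pi_2^*H$ on $X\times X$ and $-E$ is $\alpha$-ample, so $NH[2]-E$ is ample for $N\gg0$; writing $(s(X,H)+\epsilon)H[2]-E$ as a convex combination of $NH[2]-E$ (ample) and $tH[2]-E$ with $s(X,H)<t<s(X,H)+\epsilon$ (nef) exhibits it as ample plus nef, hence ample. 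If you replace your bigness step with this ampleness (or even just semiampleness) argument, the rest of your infimum comparison goes through.
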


\begin{proof}
For every $\epsilon >0$, $(s(X, H) + \epsilon)H[2] -E$ is ample so its stable base locus is empty. Thus our assertion follows.
\end{proof}

\subsection{Del Pezzo surfaces}
\label{subsec:delpezzo}
Next we discuss del Pezzo surfaces.
Let $S$ be a smooth del Pezzo surface. 
We consider $W = S \times S$ and we denote each projection by $\pi_i : W \rightarrow S_i$. Let $\alpha: W' \rightarrow W$ be the blow up of the diagonal and we denote its exceptional divisor by $E$. 
First we record a lower bound for the $\delta$-invariant:
\begin{lemm}
\label{lemm:lowerboundfordelta}
Let $S$ be a smooth del Pezzo surface.
Then we have
\[
\delta(S, -K_S)\geq \frac{1}{\epsilon(-K_S, P)}
\]
for any general point $P\in X$ where $\epsilon(-K_S, P)$ is the Seshadri constant of $-K_S$ at $P$.
\end{lemm}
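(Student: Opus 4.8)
The idea is to exhibit a family of curves in $W'$ on which $s_0 H[2] - E$ has non-positive intersection number for $s_0 = 1/\epsilon(-K_S,P)$, and then invoke Lemma~\ref{lemm:coveringcurves}. First I would recall the definition of the Seshadri constant: for a general point $P \in S$, $\epsilon(-K_S,P)$ is the supremum of $t \geq 0$ such that $\mu^*(-K_S) - tE_P$ is nef on $\mathrm{Bl}_P S$, where $\mu: \mathrm{Bl}_P S \to S$ is the blow-up at $P$ with exceptional divisor $E_P$. Equivalently, by Seshadri's criterion it measures $\inf_C \frac{(-K_S \cdot C)}{\mathrm{mult}_P C}$ over curves $C$ through $P$. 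For a general point on a surface one knows $\epsilon(-K_S, P) > 0$, and in fact there is a curve (or limit of curves) computing it.

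Next I would set up the geometry in $W' \to W = S \times S$. Fix a general $P$ and consider the fiber $F_P$ of $\pi_1 \circ \alpha$ over $P$; note $F_P \cong \mathrm{Bl}_P S$, since blowing up the diagonal in $S \times S$ and then restricting to $\{P\} \times S$ is the blow-up of $S$ at $P$, and $E|_{F_P} = E_P$ is its exceptional divisor. Also $H[2]|_{F_P} = \pi_2^*(-K_S)|_{F_P} = \mu^*(-K_S)$ since $\pi_1^*(-K_S)$ is trivial on the fiber. So on $F_P$ the restriction of $s H[2] - E$ is exactly $\mu^*(-K_S) \cdot s - E_P$, and for $s = 1/\epsilon(-K_S,P)$ this is the boundary of the nef cone — in particular there is an irreducible curve $C_P \subset F_P$ (passing through the exceptional divisor, i.e. with image through $P$ in $S_2$) with $\big(\tfrac{1}{\epsilon(-K_S,P)}\mu^*(-K_S) - E_P\big)\cdot C_P \leq 0$. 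Letting $P$ vary over a Zariski open subset of $S$ — using that the Seshadri constant is constant on a very general locus and semicontinuity gives the needed uniformity — produces a family $C_t$ satisfying hypotheses (i)–(iii) of Lemma~\ref{lemm:coveringcurves}, except that we may only get $\leq 0$ rather than $=0$.

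To handle this, I would either restate Lemma~\ref{lemm:coveringcurves} with intersection $\leq 0$ (the proof there goes through verbatim, since $(sH[2]-E)\cdot C_t < 0$ for $s$ slightly below $s_0$ is all that is used), or argue that along the nef boundary the curve class computing the Seshadri constant gives intersection exactly zero with the boundary divisor $\tfrac{1}{\epsilon}\mu^*(-K_S) - E_P$ (which is true when the Seshadri constant is "sub-maximal" and computed by an actual curve). Either way Lemma~\ref{lemm:coveringcurves} then yields $\delta(S,-K_S) \geq 1/\epsilon(-K_S,P)$. The main obstacle I anticipate is the uniformity in $P$: ensuring that a single irreducible family of curves $\{C_t\}$ exists whose members compute (or nearly compute) the Seshadri constant at a general point, rather than having the relevant curve jump around with $P$. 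This is where one uses that for del Pezzo surfaces the anticanonical Seshadri constant at a general point is well understood (it is computed by lines, conics, or anticanonical curves depending on the degree), so the family can be taken to be an actual algebraic family over an open subset of $S$, and $(s_0 H[2] - E)\cdot C_t$ is then constant in $t$ and equal to its value at one general $P$.
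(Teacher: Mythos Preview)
Your proposal is correct and follows essentially the same approach as the paper: identify the fiber $F_P$ with $\mathrm{Bl}_P S$, take the strict transform of a curve computing the Seshadri constant at a general $P$, and apply Lemma~\ref{lemm:coveringcurves}. The paper resolves your two anticipated obstacles (exact equality rather than $\leq 0$, and uniformity in $P$) by directly citing \cite{Bro06}, which shows that $\epsilon(-K_S,P)$ is constant on a dense open set and is achieved by members of a single explicit algebraic family of curves (so that $(-K_S[2] - \epsilon(-K_S,P)E)\cdot C_t = 0$ on the nose).
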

\begin{proof}
The Seshadri constant for the anticanonical divisor on a smooth del Pezzo surface is computed in \cite{Bro06}. According to this paper, $\epsilon(-K_S, P)$ is constant for a general point $P \in S$ and for such a $P$ we have
\[
\epsilon(-K_S, P) = \min_{P \in C \subset S} \frac{-K_S.C}{\mathrm{mult}_P(C)}.
\]
Moreover, curves achieving the minimum are completely described for del Pezzo surfaces of degree $\geq 2$ in \cite{Bro06} and they are members of one family from the Hilbert scheme. For a del Pezzo surface of degree $1$, this minimum is achieved by members of the anticanonical system.

Let $P \in S_1$ be a general point and let $C_t$ be the strict transform of a curve in $\{P\} \times S_2$ achieving the minimum $\epsilon(-K_S, P)$. Then we have
\[
(-K_S[2] - \epsilon(-K_S, P)E).C_t = -K_X.C_t - \epsilon(-K_S, P)\mathrm{mult}_P(C_t) = 0.
\]
Thus our assertion follows from Lemma~\ref{lemm:coveringcurves}.
\end{proof}

Now we compute $\delta(S, -K_S)$ for a del Pezzo surface $S$. 
\begin{prop}
\label{prop:degree4}
Let $S$ be a del Pezzo surface of degree $d$ where $4 \leq d \leq 8$. Then we have $\delta(S, -K_S) = \frac{1}{2}$.
\end{prop}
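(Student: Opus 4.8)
The plan is to prove the two inequalities $\delta(S,-K_S)\ge\tfrac12$ and $\delta(S,-K_S)\le\tfrac12$ separately. For the lower bound I would invoke Lemma~\ref{lemm:lowerboundfordelta}, which gives $\delta(S,-K_S)\ge 1/\epsilon(-K_S,P)$ for a general point $P$. Every del Pezzo surface of degree $d$ with $4\le d\le 8$ carries a covering family of conics --- the fibres of a conic bundle structure when $4\le d\le 7$, and a ruling when $d=8$ --- and for a conic $C$ through a general point $P$ one has $-K_S\cdot C=2$ and $\mathrm{mult}_P C=1$, whence $\epsilon(-K_S,P)\le 2$ and therefore $\delta(S,-K_S)\ge\tfrac12$. (Equivalently, one applies Lemma~\ref{lemm:coveringcurves} directly to the strict transforms in $W'$ of the curves $\{P\}\times C$, for which $\bigl(\tfrac12(-K_S)[2]-E\bigr)\cdot C_t=0$.)

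For the upper bound, since $(-K_S)[2]$ is semiample it is enough to check the condition defining $\delta$ at $s=\tfrac12$, i.e.\ that no component of $\mathrm{SB}\bigl(\tfrac12(-K_S)[2]-E\bigr)=\mathrm{SB}\bigl((-K_S)[2]-2E\bigr)$ lying off $E$ dominates $S_1$ (or $S_2$). The mechanism: for any numerical decomposition $-K_S\sim A+B$ into effective classes with $h^0(S,A)\ge 2$ and $h^0(S,B)\ge 2$, the systems $|A[2]-E|$ and $|B[2]-E|$ are non-empty (the antisymmetric tensors in $H^0(A)^{\otimes2}$, resp.\ $H^0(B)^{\otimes2}$, vanish on the diagonal), and taking products of their sections gives
\[
\mathrm{Bs}\,|
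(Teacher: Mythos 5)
Your lower bound is correct and coincides with the paper's argument: Lemma~\ref{lemm:lowerboundfordelta} together with the covering family of $(-K_S)$-conics (equivalently, a direct application of Lemma~\ref{lemm:coveringcurves} to the strict transforms of conics through a general point) gives $\delta(S,-K_S)\geq 1/2$ for $4\leq d\leq 8$.

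The upper bound, however, is truncated mid-formula, and the portion that is written stops just short of the decisive step. What you have established is: if $-K_S\sim A+B$ with $h^0(A),h^0(B)\geq 2$, then $|A[2]-E|$ and $|B[2]-E|$ are nonempty and
\[
\mathrm{SB}\bigl((-K_S)[2]-2E\bigr)\ \subset\ \mathrm{Bs}\,|A[2]-E|\ \cup\ \mathrm{Bs}\,|B[2]-E|.
\]
This is not enough. When $A$ is a conic class $F$, the system $|F[2]-E|$ has a \emph{unique} member, namely $p_F[2]^*\Delta_{\mathbb P^1}-E$ where $p_F:S\to\mathbb P^1$ is the associated conic fibration; this is an irreducible divisor, not contained in $E$, dominating both factors. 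So a single decomposition leaves a dominant component inside your bound and yields no conclusion about $\delta$. The paper's proof uses \emph{two} decompositions $-K_S\sim F_1+F_2\sim D_1+D_2$ into conic classes with $F_i\cdot D_j=1$, giving
\[
\mathrm{SB}\bigl((-K_S)[2]-2E\bigr)\ \subset\ (\Delta_{F_1}\cup\Delta_{F_2})\cap(\Delta_{D_1}\cup\Delta_{D_2}),
\]
and each $\Delta_{F_i}\cap\Delta_{D_j}$ has its only dominant component inside $E$: a pair $(P,Q)$ lying in a common fiber of both $p_{F_i}$ and $p_{D_j}$ forces $P=Q$ generically, since two such fibers meet in $F_i\cdot D_j=1$ point. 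You would also need to exhibit these decompositions (for $d=4$ the paper takes strict transforms of suitable plane conics and lines through the blown-up points), and this existence is precisely where $d\geq 4$ enters --- on a cubic surface no such pair of conic decompositions of $-K_S$ exists, and indeed $\delta=2/3$ there. As submitted, the argument for $\delta(S,-K_S)\leq 1/2$ is missing its key step.
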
 

\begin{proof}
We only discuss the case of degree $4$ del Pezzo surfaces. Other cases are easier.

Suppose that $S$ is a del Pezzo surface of degree $4$.
Let $F_1$ be a $-K_S$-conic on $S$ and $F_2$ be another $-K_S$-conic on $S$ such that $-K_S \sim F_1 + F_2$. Indeed, one may find such a pair of $-K_S$-conics in the following way: let $\phi : S \rightarrow \mathbb P^2$ be a blow down to $\mathbb P^2$ and we may assume that $\phi$ is the blow up at $P_1, \cdots, P_5 \in \mathbb P^2$. Then one can find a general plane conic $C_1$ and a general line $C_2$ such that $C_1$ contains $P_1, \cdots, P_4$ and $C_2$ contains $P_5$. Then their strict transforms satisfy the desired property. 

Now the linear system $|F_i|$ defines a conic fibration $p_i : S \rightarrow \mathbb P^1$. It induces a morphism $p_i[2] : W' \rightarrow \mathbb P^1 \times \mathbb P^1$. Let $\Delta_{\mathbb P^1}$ be the diagonal of $\mathbb P^1 \times \mathbb P^1$. Then $F_i[2] -E$ is linearly equivalent to a unique effective divisor $p_i[2]^*\Delta_{\mathbb P^1}-E$. We denote it by $\Delta_{F_i}$. Let $D_1$ be a third conic such that $D_1.F _1 = D_1 .F_2 = 1$. Indeed, one may take $D_1$ as the strict transform of a conic passing through $P_1, P_2, P_3, P_5$. Let $D_2$ be the class of conics such that $-K_S \sim D_1 + D_2$. Then we have $D_2.F_1 = D_2.F_2 = 1$. We also consider $\Delta_{D_i}$. Then we have
\[
\mathrm{SB}(-K_S[2]- 2E) \subset (\Delta_{F_1}\cup \Delta_{F_2}) \cap (\Delta_{D_1}\cup \Delta_{D_2}) 
\]
but the only possible dominant component of $\Delta_{F_i} \cap \Delta_{D_i}$ is contained in $E$.
Thus we conclude that $\delta(X, H) \leq 1/2$. The opposite inequality follows from Lemma~\ref{lemm:lowerboundfordelta} and \cite{Bro06}.


\end{proof}

\begin{prop}
\label{prop:cubic}
Let $S$ be a del Pezzo surface of degree $3$. Then we have $\delta(S, -K_S) = \frac{2}{3}$.
\end{prop}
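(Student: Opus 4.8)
The plan is to prove $\delta(S,-K_S)=\frac{2}{3}$ for a smooth cubic surface $S$ by establishing the two inequalities separately, following the template already used for degree $4$ in Proposition~\ref{prop:degree4}.

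\textbf{Lower bound.} First I would invoke Lemma~\ref{lemm:lowerboundfordelta}, which gives $\delta(S,-K_S)\geq 1/\epsilon(-K_S,P)$ for general $P$. By Broustet's computation \cite{Bro06} of the Seshadri constant on del Pezzo surfaces, for a cubic surface one has $\epsilon(-K_S,P)=3/2$ at a general point, the minimum being achieved by a family of curves $C$ with $(-K_S.C)/\mathrm{mult}_P(C)=3/2$ — concretely, the strict transforms of twisted cubics through $P$, or equivalently members of a suitable conic family counted with the appropriate multiplicity; in any case Broustet identifies them explicitly. This immediately yields $\delta(S,-K_S)\geq 2/3$. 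Alternatively, and perhaps more transparently, I would directly exhibit a covering family of curves $C_t$ in the fibers of $\pi_1\circ\alpha$ with $(\tfrac{2}{3}(-K_S)[2]-E).C_t=0$ and apply Lemma~\ref{lemm:coveringcurves} directly: take $C_t$ to be the strict transform in $\{P(t)\}\times S$ of a $-K_S$-conic (a conic in one of the six conic fibrations) passing through $P(t)$; such a conic has $-K_S.C=2$ and passes through $P(t)$ with multiplicity $1$, giving intersection $2-\tfrac{2}{3}\cdot\tfrac{3}{2}\cdot 1$? — that is not zero, so instead one must use the minimizing twisted cubic family with $-K_S.C=3$, $\mathrm{mult}_P=2$, giving $3-\tfrac{2}{3}\cdot\tfrac{3}{2}\cdot 2 = 0$, as desired. (I will need to double-check which curve class realizes the Seshadri minimum; this is exactly the point where I would lean on \cite{Bro06}.)

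\textbf{Upper bound.} This is the substantive part. I want to show $\mathrm{SB}\bigl(-K_S[2]-\tfrac{3}{2}E\bigr)$ has no component dominating both factors and not contained in $E$; equivalently, for every $s>\tfrac{2}{3}$, the stable base locus of $s(-K_S)[2]-E$ has this property. The strategy mirrors the degree-$4$ case: I would cover $-K_S$ by effective divisors built from conic fibrations and then intersect the resulting base-locus estimates. Concretely, on a cubic surface there are six conic fibrations $p_i\colon S\to\mathbb{P}^1$ with fiber classes $F_i$ (each $-K_S.F_i=2$), and for each $i$ the divisor $F_i[2]-E$ is represented by the effective (indeed unique effective, as in the degree-$4$ argument) divisor $\Delta_{F_i}:=p_i[2]^*\Delta_{\mathbb{P}^1}-E$, whose support dominates at most one factor of $W'=X\times X$... no — rather, its support fibered structure means a point $(x,y)\notin E$ lies on $\Delta_{F_i}$ iff $p_i(x)=p_i(y)$. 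I would then find finitely many relations expressing $-K_S$ (or a multiple such as $\tfrac{3}{2}(-K_S)$ or $3(-K_S)$) as sums $F_i+F_j$ or $F_i+F_j+F_\ell$ with $F_i.F_j=$ appropriate values, so that the base locus of the relevant multiple of $-K_S[2]-E$ lands in an intersection $\bigcap(\Delta_{F_{i}}\cup\cdots)$; then one checks that the only way a point outside $E$ can lie in all of these simultaneously is if it lies on the diagonal (hence in $E$ after blow-up) or in some fiber-non-dominant locus. The combinatorics of the $27$ lines / six conic fibrations on the cubic is what makes this work: two general conic fibrations $p_i,p_j$ have the property that $p_i\times p_j\colon S\to\mathbb{P}^1\times\mathbb{P}^1$ is birational onto its image (a $(2,2)$-curve... actually degree considerations: $F_i.F_j=1$ so it is birational), so $\Delta_{F_i}\cap\Delta_{F_j}$ meets the complement of $E$ only over the diagonal. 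I expect I will need three or four such fibrations (not just two), because with denominator $3/2$ rather than $2$ the single relation $-K_S=F_i+F_j$ over-counts $E$; I would instead write $\tfrac{3}{2}(-K_S)=\tfrac{1}{2}(F_i+F_j)+\tfrac{1}{2}(F_k+F_\ell)+\tfrac{1}{2}(F_m+F_n)$ using three pairwise-general pairs among the six fibrations, and verify the coefficient of $E$ works out and the residual base locus is non-dominant.

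\textbf{Main obstacle.} The hard part will be the upper bound bookkeeping: choosing the right combination of conic fibration classes so that the coefficient of $E$ in the effective representative of the relevant multiple of $-K_S[2]$ is exactly right (matching $\tfrac{3}{2}$, or equivalently $2\cdot\tfrac{3}{2}=3$ after scaling), while simultaneously ensuring the intersection of the supports of the corresponding $\Delta$-divisors meets $W'\setminus E$ only in loci that fail to dominate one of the factors. Unlike degree $4$, where two conic pencils plus their "partners" sufficed cleanly, here the mismatch between $-K_S.F_i=2$ and the target Seshadri value $3/2$ forces a more delicate fractional combination, and one must rule out spurious dominant components of the resulting intersection — possibly using that three general conic fibrations on a cubic surface separate points generically, i.e. $S\to(\mathbb{P}^1)^3$ is generically injective. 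I would verify this separation statement (it follows from the fact that the six fibrations have no common "special" curve through a general point) and then the argument closes exactly as in Proposition~\ref{prop:degree4}.
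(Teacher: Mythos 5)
Your lower bound is fine and is exactly the paper's: Lemma~\ref{lemm:lowerboundfordelta} plus \cite{Bro06}, with $\epsilon(-K_S,P)=3/2$ at a general point realized by the tangent hyperplane section (an anticanonical curve with $-K_S.C=3$ and $\mathrm{mult}_P(C)=2$). Your parenthetical check should read $\frac{2}{3}(-K_S).C-\mathrm{mult}_P(C)=\frac{2}{3}\cdot 3-2=0$, not $3-\frac{2}{3}\cdot\frac{3}{2}\cdot 2$ (which equals $1$), but that is a slip, not a gap.

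The upper bound has a genuine gap: both decompositions you actually write down are numerically impossible on a cubic surface. If $-K_S=F_i+F_j$ with $F_i,F_j$ conic classes, then $3=(-K_S)^2=2F_i.F_j$ forces $F_i.F_j=3/2$; and your proposed identity $\tfrac{3}{2}(-K_S)=\tfrac{1}{2}\sum(F_i+F_j)$ over three pairs requires $3(-K_S)$ to be a sum of six conic classes, giving $9=(-K_S).3(-K_S)=6\cdot 2=12$. (Also, a cubic surface carries $27$ conic fibration classes, one for each line, not six.) A numerically consistent conics-only identity does exist, namely $-2K_S\sim F_1+F_2+F_3$ with pairwise intersections $2$, so that $-2K_S[2]-3E=\sum_{i}(F_i[2]-E)=\sum_i\Delta_{F_i}$; but then each $\Delta_{F_i}$ is itself a divisor dominating both factors, and to conclude you must intersect over several such triples and verify that for every choice of one conic from each triple the corresponding $\Delta$'s meet, outside $E$, in a non-dominant set --- which requires all cross-intersection numbers to equal $1$. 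That combinatorial verification among the $27$ conic classes is precisely the step you defer, and it is the entire content of the upper bound. The paper avoids it: writing $-K_S\sim F_1+E_\ell$ with $E_\ell$ a $(-1)$-curve, it sets $D=-2K_S-F_1\sim -K_S+E_\ell$, which is the pullback of $-K_{S_4}$ from the degree-$4$ del Pezzo surface obtained by contracting $E_\ell$; then
\[
-2K_S[2]-3E=(D[2]-2E)+(F_1[2]-E),
\]
the first summand is handled by Proposition~\ref{prop:degree4} together with birational invariance (Lemma~\ref{lemm:birational}), and only the single divisor $\Delta_{F_1}$ survives, which is then removed by running the argument again with a second conic class. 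If you want to keep your conics-only route you must either exhibit the required triples explicitly or switch to the paper's reduction to degree $4$.
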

\begin{proof}
Let $F_1$ be a $-K_S$-conic. One can find a $(-1)$-curve $E$ such that $-K_S \sim F_1 + E$. Indeed, let $\phi : S \rightarrow \mathbb P^2$ be the blow up at $P_1, \cdots, P_6 \in \mathbb P^2$. We let $F_1$ to be the strict transform of a general conic passing through $P_1, \cdots, P_4$ and $E$ be the strict transform of a line passing through $P_5, P_6$. Then they satisfy the desired property. Now let $D = - 2K_S - F_1 \sim -K_S + E$ then $D$ is the pullback of the anticanonical class from a degree $4$ del Pezzo surface. The upshot is that we have
\[
-2K_S[2] - 3E = D[2] - 2E + F_1[2]-E.
\]
Thus it follows from the proof of Proposition~\ref{prop:degree4} that the stable locus of $D[2]-2E$ minus $E$ is not dominant. Thus the stable locus of $-2K_S[2] - 3E$ is contained in $\Delta_{F_1}$. By considering another conic and applying the same discussion, we conclude that $\delta(S,-K_S) \leq 2/3$. The opposite inequality follows from Lemma~\ref{lemm:lowerboundfordelta} and  \cite{Bro06}.

\end{proof}

\begin{prop}
\label{prop:degree2}
Let $S$ be a del Pezzo surface of degree $2$. Then we have $\delta(S, -K_S) = 1$.
\end{prop}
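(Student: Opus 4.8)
The strategy is to prove the two inequalities $\delta(S, -K_S) \leq 1$ and $\delta(S, -K_S) \geq 1$ separately, following the template established in Propositions~\ref{prop:degree4} and~\ref{prop:cubic} but now working with the anticanonical embedding of a degree $2$ del Pezzo surface. Recall that for $\deg S = 2$ the anticanonical map $\Phi_{|-K_S|} : S \to \mathbb{P}^2$ is a double cover branched along a smooth quartic curve, so $-K_S$ is ample but the anticanonical map is no longer birational. Consequently the cheap upper bound of Remark~\ref{rema:elementary} gives nothing directly, and one genuinely expects $\delta$ to be larger; the content of the proposition is that it equals exactly $1$.

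For the lower bound $\delta(S, -K_S) \geq 1$, I would invoke Lemma~\ref{lemm:lowerboundfordelta} together with the Seshadri constant computation of \cite{Bro06}: for a degree $2$ del Pezzo surface and a general point $P$, the minimum defining $\epsilon(-K_S, P)$ is achieved by members of the anticanonical system $|-K_S|$, which have a node (i.e.\ multiplicity $2$) at a general point since the double cover is ramified; thus $\epsilon(-K_S, P) = \frac{-K_S \cdot (-K_S)}{2} = \frac{2}{2} = 1$, giving $\delta(S, -K_S) \geq 1/\epsilon(-K_S, P) = 1$. Concretely, taking $C_t$ to be the strict transform in $\{P(t)\} \times S_2$ of a general anticanonical curve singular at $P(t)$, one has $(-K_S[2] - E) \cdot C_t = -K_S \cdot C_t - \mathrm{mult}_{P(t)}(C_t) = 2 - 2 = 0$, and Lemma~\ref{lemm:coveringcurves} applies with $s_0 = 1$.

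For the upper bound $\delta(S, -K_S) \leq 1$, the goal is to show that $\mathrm{SB}(-K_S[2] - E)$ has no component dominating both factors other than those contained in $E$. The natural approach is to decompose $-K_S$ as a sum of effective classes whose associated "diagonal divisors" intersect in a controlled way, exactly as in the degree $3$ and $4$ cases. Here I would write $-K_S \sim F + G$ where $F, G$ are suitable effective divisors — for instance, pulling back via $\phi : S \to \mathbb{P}^2$ (the blow-up at $P_1, \dots, P_7$), one can take $F$ a conic class through several of the points and $G$ the complementary class, or more usefully iterate the degree-$3$ result: choose a $(-1)$-curve $E_0$ and a $-K_S$-conic type class $F$ with $-K_S \sim F + E_0$, so that $-K_S[2] - E = (F[2] - E) + (E_0[2])$; since $F[2] - E$ is effective and $E_0[2]$ is effective, $\mathrm{SB}(-K_S[2]-E)$ is contained in $\Delta_F \cup \mathrm{Supp}(E_0[2])$, and $E_0[2] = \alpha^*\pi_1^*E_0 + \alpha^*\pi_2^*E_0$ is a sum of non-dominant divisors. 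Running this over enough choices of the decomposition (varying $F$ through the relevant family so the $\Delta_F$'s have no common dominant component) forces every dominant component of the stable base locus into $E$.

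**Main obstacle.** The delicate point is the upper bound: I must exhibit \emph{enough} effective decompositions $-K_S \sim F + G$ so that the intersection of the corresponding $\Delta_F$'s (or more precisely the supports of the effective representatives of $-K_S[2] - E$ obtained from them) contains no component that dominates both copies of $S$. In the degree $2$ case the anticanonical linear system gives less flexibility than in higher degree — one has to be careful that the chosen curves $F_t$, as $P$ varies, genuinely sweep out enough of $W'$ that their common locus collapses onto $E$ — and one may need to combine the conic-bundle-type decomposition with the anticanonical pencils through two general points. Verifying that no dominant component survives (rather than just bounding each $\Delta_F$ individually) is the real work, and it is where I would expect to spend the bulk of the argument, likely by reducing to the degree $3$ and degree $4$ computations already carried out via an auxiliary contraction, together with the observation that a curve in a general fiber $F_P$ of $\pi_1 \circ \alpha$ on which $-K_S[2] - E$ is non-negative must, by the Seshadri computation, meet $E$ and hence cannot give a new dominant component of the stable base locus.
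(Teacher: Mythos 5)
Your proposal has a genuine gap in the lower bound, which is the half of the proposition where the paper has to do something new. The Seshadri route you propose cannot give $\delta(S,-K_S)\geq 1$ on a degree $2$ del Pezzo surface. Your claimed curves do not exist: $\dim|-K_S|=2$, while a point of multiplicity $2$ at a general $P$ imposes $3$ conditions, so no anticanonical member is singular at a general point (the ramification of the double cover is irrelevant here --- a general $P$ does not lie on the ramification curve). More decisively, no irreducible curve through a general $P$ satisfies $-K_S\cdot C=\mathrm{mult}_P C=:m$ at all: adjunction and the genus bound for an $m$-fold point give $C^2\geq m^2-2$, while the Hodge index inequality $(K_S\cdot C)^2\geq K_S^2\,C^2$ gives $C^2\leq m^2/2$, forcing $m\leq 2$; the cases $m=1$ ($(-1)$-curves) and $m=2$ (singular anticanonical members) both miss a general point. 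Hence $\epsilon(-K_S,P)>1$ for general $P$ and Lemma~\ref{lemm:coveringcurves} with curves in fibers of $\pi_1\circ\alpha$ yields at best $\delta\geq 1/\epsilon(-K_S,P)<1$. This is precisely why the paper abandons Lemma~\ref{lemm:lowerboundfordelta} for degree $2$ and instead uses the graph $S^\iota=\{(P,\iota(P))\}$ of the Geiser involution: $S^\iota$ dominates both factors, is not contained in $E$, meets the diagonal along the ramification divisor $R\in|-2K_S|$, and since $\iota^*K_S=K_S$ one computes $(-K_S[2]-(1+\epsilon)E)\cdot C<0$ for \emph{every} curve $C\subset S^\iota$, so $S^\iota$ sits in $\mathrm{SB}(-sK_S[2]-E)$ for all $s<1$. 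Your proposal contains no substitute for this idea.

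The upper bound is closer, since you correctly anticipate reducing to the lower-degree computations via auxiliary contractions, but the decomposition you write down is numerically impossible: if $F$ is a $-K_S$-conic and $E_0$ a $(-1)$-curve then $-K_S\cdot(F+E_0)=3\neq 2=(-K_S)^2$, so $-K_S\not\sim F+E_0$ on a degree $2$ surface. The paper's decomposition is $-K_S\sim E_1+E_2$ with \emph{both} $E_i$ being $(-1)$-curves; contracting each $E_i$ gives morphisms $f_i:S\to S_i$ to cubic surfaces with $-3K_S\sim -f_1^*K_{S_1}-f_2^*K_{S_2}$, and then $-3K_S[2]-3E$ splits as the sum of two copies of (the pullback of) $-K_{S_i}[2]-\tfrac{3}{2}E$, each of whose stable loci is controlled by Proposition~\ref{prop:cubic}. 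You should redo this half with that decomposition rather than the conic-plus-line one.
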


\begin{proof}
We may write $-K_S \sim E_1 + E_2$ where $E_i$ is a $(-1)$-curve.
Indeed, let $\phi : S \rightarrow \mathbb P^2$ be the blow up at $P_1, \cdots, P_7$. Then we may define $E_1$ as the strict transform of a conic passing through $P_1, \cdots, P_5$ and $E_2$ be the strict transform of a line passing through $P_6, P_7$.
Let $f_i: S \rightarrow S_i$ be the blow down of $E_i$ to a cubic surface.
Then $-3K_S$ can be expressed as 
\[
-3K_S \sim -f_1^*K_{S_1}- f_2^*K_{S_2}.
\]
Thus arguing as Proposition~\ref{prop:cubic} we prove that the stable locus of $-K_S[2]-E$ does not contain any dominant component other than $E$. This shows that $\delta(S, -K_S) \leq 1$.

On the other hand, let $\phi : S \rightarrow \mathbb P^2$ be the anticanonical double cover.
We denote the involution associated to $\phi$ by $\iota$
and we consider the image $S^\iota$ of the following map
\[
S \rightarrow S\times S, P \mapsto (P, \iota(P)).
\]
Then one can show that for any curve $C$ in $S^\iota$ and any $\epsilon > 0$ we have
\[
(-K_S[2]-(1 + \epsilon)E).C <0
\]
Thus $C$ is contained in the stable locus of $-K_S[2]-(1 + \epsilon)E$, proving the claim.

\end{proof}

\begin{prop}
\label{prop:degree1}
Let $S$ be a del Pezzo surface of degree $1$. Then we have $3/2 \leq \delta(S, -K_S) \leq 2$.
\end{prop}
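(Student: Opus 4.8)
The plan is to establish the two inequalities separately, each by adapting the corresponding half of the proofs already given for del Pezzo surfaces of degree $2$ and $3$ (Propositions~\ref{prop:cubic} and~\ref{prop:degree2}).

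For the lower bound, note first that Lemma~\ref{lemm:lowerboundfordelta} is by itself too weak here: for a del Pezzo surface of degree $1$ the unique member of $|-K_S|$ through a general point $P$ is smooth there, so $\epsilon(-K_S,P)=1$ (cf.\ \cite{Bro06}) and that lemma only gives $\delta(S,-K_S)\ge 1$. Instead I would run the involution argument from the lower bound part of Proposition~\ref{prop:degree2}, with the Bertini involution in place of the Geiser involution. Let $\phi\colon S\to Q$ be the degree-$2$ morphism onto the quadric cone $Q\subset\mathbb P^3$ defined by $|-2K_S|$, and let $\iota$ be the deck involution. The key preliminary is the class of the divisorial part $R$ of $\mathrm{Fix}(\iota)$: since $\phi^*\mathcal O_Q(1)=-2K_S$ and $\omega_Q=\mathcal O_Q(-2)$ (as $Q$ is a Gorenstein quadric surface in $\mathbb P^3$), the Hurwitz formula $K_S=\phi^*K_Q+R$ yields $R\sim-3K_S$ (a smooth curve of genus $4$). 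Now let $S^\iota\subset S\times S$ be the image of $P\mapsto(P,\iota P)$ and let $\widetilde{S^\iota}\subset W'$ be its strict transform; it is a smooth surface, not contained in $E$, and each $\pi_i\circ\alpha|_{\widetilde{S^\iota}}$ is birational onto $S_i$. Write $g\colon\widetilde{S^\iota}\to S$ for one of these morphisms. Using $\iota^*(-K_S)=-K_S$ one finds $(-K_S[2])|_{\widetilde{S^\iota}}=2g^*(-K_S)$, while $\widetilde{S^\iota}\cap E$ maps onto $R$ (the isolated fixed point, where $d\iota=-\mathrm{id}$, gets contracted), so $g_*\bigl(E|_{\widetilde{S^\iota}}\bigr)=[R]=-3K_S$. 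Hence, for an ample $A$ on $S$,
\[
\bigl(sH[2]-E\bigr)|_{\widetilde{S^\iota}}\cdot g^*A=2s(-K_S\cdot A)-3(-K_S\cdot A)=(2s-3)(-K_S\cdot A),
\]
which is negative when $s<3/2$; since $g^*A$ is nef, $(sH[2]-E)|_{\widetilde{S^\iota}}$ is not pseudo-effective, so $\widetilde{S^\iota}\subseteq\mathrm{SB}(sH[2]-E)$ for every $s<3/2$. As $\widetilde{S^\iota}$ dominates both factors, this forces $\delta(S,-K_S)\ge 3/2$.

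For the upper bound I would use the inductive mechanism of Propositions~\ref{prop:cubic} and~\ref{prop:degree2}: write a multiple of $-K_S$ as a sum of pull-backs of anticanonical classes from del Pezzo surfaces of degree $2$, for which $\delta=1$ is known \emph{and is attained} by Proposition~\ref{prop:degree2}. Fix a model $S=\mathrm{Bl}_{P_1,\dots,P_8}\mathbb P^2$ with exceptional curves $E_1,\dots,E_8$ and put $Q_1:=6L-2(E_2+\cdots+E_8)-3E_1$. One checks that $Q_1$ is a $(-1)$-class, hence an irreducible $(-1)$-curve since $-K_S$ is ample, and that $E_1+Q_1\sim-2K_S$. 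Let $f_1\colon S\to T_1$ and $f_2\colon S\to T_2$ be the contractions of $E_1$ and of $Q_1$; both $T_i$ are del Pezzo of degree $2$, and $f_i^*(-K_{T_i})=-K_S+(\text{contracted curve})$, so $f_1^*(-K_{T_1})+f_2^*(-K_{T_2})=-2K_S+(E_1+Q_1)=-4K_S$ and therefore
\[
-4K_S[2]-2E=\bigl(f_1^*(-K_{T_1})[2]-E\bigr)+\bigl(f_2^*(-K_{T_2})[2]-E\bigr).
\]
By Proposition~\ref{prop:degree2} the stable base locus of $-K_{T_i}[2]-E_{T_i}$ on the blow-up $W_{T_i}$ of $T_i\times T_i$ along its diagonal has no component dominating both factors other than ones inside the exceptional divisor; transporting this along the birational map $W'\dashrightarrow W_{T_i}$ exactly as in the proof of Lemma~\ref{lemm:birational} gives the same statement for $\mathrm{SB}(f_i^*(-K_{T_i})[2]-E)$. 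Hence for any $Y\subset W'$ dominating both factors and not contained in $E$ one can find, for $i=1,2$, effective $D_i\sim_{\mathbb R}f_i^*(-K_{T_i})[2]-E$ with $Y\not\subseteq\mathrm{Supp}(D_i)$; then $D_1+D_2$ is an effective representative of $-4K_S[2]-2E$ avoiding $Y$. So $\mathrm{SB}(-4K_S[2]-2E)$, and therefore $\mathrm{SB}(2(-K_S)[2]-E)$ (since $-4K_S[2]-2E=2(2(-K_S)[2]-E)$), contains no such $Y$, i.e.\ $\delta(S,-K_S)\le 2$.

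The step I expect to require the most care is the book-keeping at the threshold value $s=1$ for the degree-$2$ surfaces: Lemma~\ref{lemm:birational} is phrased with an $\epsilon$-buffer, so one must observe that the relevant stable-base-locus statement in Proposition~\ref{prop:degree2} actually holds \emph{at} $s=1$ (its proof does produce the needed effective divisor) and that it survives transport through the birational contractions $W'\dashrightarrow W_{T_i}$, whose non-isomorphism loci dominate at most one factor, so that the pieces $f_i^*(-K_{T_i})[2]-E$ are genuinely effective with small stable base locus rather than vacuously having full stable base locus. On the lower-bound side the only non-routine inputs are the computation $R\sim-3K_S$ (which needs mild attention because $Q$ is singular) and the fact that $\widetilde{S^\iota}$ is a blow-up of $S$ rather than $S$ itself; neither affects the pseudo-effectivity computation above.
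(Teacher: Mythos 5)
Your proof is correct and follows essentially the same route as the paper: the upper bound via the decomposition $-4K_S \sim -f_1^*K_{S_1}-f_2^*K_{S_2}$ coming from $E_1 + \iota^*E_1 \sim -2K_S$ and Proposition~\ref{prop:degree2}, and the lower bound via the Bertini involution on the bianticanonical double cover $S \to Q$, whose fixed curve lies in $|-3K_S|$. You have in fact supplied several details (the Hurwitz computation $R \sim -3K_S$, the explicit class of $\iota^*E_1$, and the threshold bookkeeping at $s=1$ for the degree-$2$ surfaces) that the paper's proof leaves implicit.
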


\begin{proof}
Let $\phi : S \rightarrow Q \subset \mathbb P^3$ be the double cover associated to $|-2K_S|$.
Let $E_1$ be a $(-1)$-curve on $S$. Then $\phi|_{E_1} : E_1 \rightarrow \phi(E_1)$ is 1:1 and its pullback consists of two $(-1)$-curves including $E_1$. Thus we may write as $-2K_S \sim E_1 + E_2$.

Let $f_i : S \rightarrow S_i$ be the blow down of $E_i$ to a degree $2$ del Pezzo surface.
Then $-4K_S$ can be expressed as 
\[
-4K_S \sim -f_1^*K_{S_1}  -f_2^*K_{S_2}  
\]
Thus by Proposition~\ref{prop:degree2}, one may conclude that $\delta(S, -K_S) \leq 2$.

Another inequality follows from the discussion of Proposition~\ref{prop:degree2} using the double cover $\phi : S \rightarrow Q \subset \mathbb P^3$. 



\end{proof}

\begin{rema}
In the proof of Proposition~\ref{prop:degree4}, \ref{prop:cubic}, \ref{prop:degree2} we show that for any component $V \subset \mathrm{SB}(-\delta(S, -K_S)K_S[2]- E)$ not contained in $E$, the projection $\pi_i\circ \alpha|_V$ is not dominant. In particular we conclude that
\[
\delta(S, -K_S) = \inf\left\{s\in \mathbb R \middle| \begin{matrix}\text{ for any component $V \subset \mathrm{SB}(-sK_S[2] - E)$ not contained in $E$,} \\
\text{one of $\pi_i\circ \alpha|_V$ is not dominant to $S_i$. }
\end{matrix}\right\},
\] 
is the minimum.
\end{rema}

\section{Repulsion principle for projective varieties}
\label{sec:repulsion}

Assuming Vojta's conjecture and the Non-Vanishing conjecture in the minimal model program, McKinnon shows a Repulsion principle for varieties of non-negative Kodaira dimension in \cite{Mck11}. In this paper we develop a weaker Repulsion principle for projective varieties in general. We introduce some notations. We refer readers to \cite{Sil87} for the definitions and their basic properties.

Let $k$ be a number field. Suppose that we have a projective variety $X$ defined over $k$ and a big $\mathbb Q$-divisor $L$ on $X$. Let $D$ be a closed subscheme on $X$. 
Let $h_{D, v}$ be a local height function for $D$ with respect to $v$. Note that in this paper, we use unnormalized heights, i.e., we do not normalize heights by the degree of $k$.
Let $\Delta$ be the diagonal of $X \times X$. We define the $v$-adic distant function by
\[
h_{\Delta, v}(P, Q) = - \log \mathrm{dist}_v(P, Q).
\]
See \cite{Sil87} for basic properties of this function. 

Let $X$ be a normal projective variety defined over a number field $k$ and $L$ be a big $\mathbb Q$-Cartier divisor on $X$. We set
\[
\delta(X, L) = \delta(\overline{X}, \overline{L})
\]
where $\overline{X}, \overline{L}$ are the base change of $X$, $L$ to an algebraic closure.
Here is our main theorem:

\begin{theo}[the Repulsion principle]
\label{theo:repulsion}
Let $X$ be a normal projective variety defined over a number field $k$. Let $v$ be a place of $k$. Let $A$ be a big Cartier divisor on $X$. Then for any $\epsilon > 0$ there exists a constant $C= C_\epsilon > 0$ and a non-empty Zariski open subset $U = U(\epsilon) \subset X$ such that we have
\[
\mathrm{dist}_v(P, Q) > C (H_A(P)H_A(Q))^{-(\delta(X, A) + \epsilon)},
\]
for any $P, Q \in U(k)$ with $P \neq Q$.
\end{theo}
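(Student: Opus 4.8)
The plan is to translate the statement about the stable base locus of $sA[2]-E$ on $W'$ into a statement about local heights, and then use the arithmetic positivity of height functions (Theorem \cite[B.3.2]{Silver00}(1)) to conclude. Fix $\epsilon > 0$ and set $s = \delta(X, A) + \epsilon$. By definition of $\delta(X,A)$, every component $V$ of $\mathrm{SB}(sA[2] - E)$ not contained in $E$ has the property that one of the two maps $\pi_i \circ \alpha|_V$ fails to dominate $X_i$. Let $Z_i \subset X_i$ be the union of the (closed) images $\overline{(\pi_i\circ\alpha)(V)}$ over those components $V$ for which $\pi_i\circ\alpha|_V$ is non-dominant; each $Z_i$ is a proper closed subset. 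Set $U = X \setminus (Z_1 \cup Z_2)$, a non-empty Zariski open subset of $X$, defined over $k$ after taking a Galois-invariant model (the construction is Galois-equivariant, so $U$ descends to $k$).

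Next I would work on $W'$ with the divisor $D_s := sA[2] - E$. Choose an adelic metric on $A$ (hence on $A[2]$) and on $\mathcal{O}(E)$, and let $h_{D_s, v}$ be the associated local height at our fixed place $v$. The $v$-adic distance function $\mathrm{dist}_v(P,Q)$ is, up to bounded multiplicative factors, a local Weil function for the diagonal $\Delta \subset X\times X$, i.e. for $E$ on $W'$: concretely $h_{E, v}(\alpha^{-1}(P,Q)) = -\log \mathrm{dist}_v(P,Q) + O(1)$ for $(P,Q)$ off $\Delta$ (this is standard, cf. \cite{Sil87}). On the other hand, $h_{A[2], v}$ decomposes as $h_{A,v}(P) + h_{A,v}(Q) + O(1)$ via $\pi_1, \pi_2$. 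Hence
\[
h_{D_s, v}(\alpha^{-1}(P,Q)) = s\bigl(h_{A,v}(P) + h_{A,v}(Q)\bigr) + \log \mathrm{dist}_v(P,Q) + O(1).
\]
Now $s A[2] - E$ is pseudo-effective (Remark~\ref{rema:elementary}), but more importantly its stable base locus misses the image of $U\times U$: for $(P, Q) \in U(k) \times U(k)$ with $P \neq Q$, the point $\alpha^{-1}(P,Q) \in W'$ lies outside $E$ and outside every component $V$ of $\mathrm{SB}(sA[2]-E)$, because each such $V$ maps into $Z_1$ or $Z_2$ under one of the projections and $P, Q \notin Z_1 \cup Z_2$. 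Applying the positivity property of height functions (Theorem \cite[B.3.2]{Silver00}(1)) to $D_s$ on $W'\setminus \mathrm{SB}(sA[2]-E)$ gives $h_{D_s, v'}(\alpha^{-1}(P,Q)) \geq O(1)$ summed over all places — but I want it at the single place $v$. To get a single-place bound I would instead use: the global height $h_{sA[2]-E}$ is bounded below on the complement of the stable base locus, and for the other places $v' \neq v$ one has the trivial lower bound $h_{D_s, v'} \geq -C'\max(1, h_{A,v'}(P), h_{A,v'}(Q))$ only after absorbing — so the cleanest route is to pass through the \emph{global} inequality $s(h_A(P)+h_A(Q)) + h_{\Delta}(P,Q) \geq O(1)$ where $h_\Delta = \sum_v h_{\Delta,v}$, combined with the elementary fact that $h_{\Delta, v'}(P,Q) \leq O(1)$ at every place $v' \neq v$ when $P \ne Q$ are fixed rational points...

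Actually the honest single-place statement follows from the standard trick in \cite{Sil87, Mck11}: decompose $h_\Delta(P,Q) = \sum_{v'} h_{\Delta, v'}(P, Q)$, note each term is $\leq C_{v'}$ for all but finitely many $v'$ (and $\geq -C_{v'}$ always, up to the contribution of $A$), and isolate the place $v$. Putting $T_P = H_A(P)$, the global lower bound $s\log(T_P T_Q) + \sum_{v'} h_{\Delta, v'}(P,Q) \geq -C$ together with $h_{\Delta, v'}(P,Q) \le C_{v'} \cdot (1 + \text{small})$ at $v' \ne v$ yields $h_{\Delta, v}(P,Q) \leq s\log(T_P T_Q) + C''$, i.e. $-\log \mathrm{dist}_v(P,Q) \leq (\delta(X,A)+\epsilon)\log(H_A(P)H_A(Q)) + C''$, which rearranges to the claimed bound with $C = e^{-C''}$. \textbf{The main obstacle} I anticipate is exactly this passage from the global positivity of $sA[2]-E$ to a bound at the single chosen place $v$: one must control the local heights $h_{\Delta, v'}$ at the auxiliary places uniformly in $P, Q$, which requires choosing the adelic metrics compatibly and invoking that $h_{\Delta,v'}(P,Q) \leq O(1)$ for $v'$ outside a fixed finite set independent of $P,Q$ — this is where the argument of \cite{Mck11} is genuinely used, and where care with the integral (adelic) condition on the metrics is needed. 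A secondary technical point is descending $U$ and the metrics to $k$ and checking that the $O(1)$'s are uniform over $U(k)$.
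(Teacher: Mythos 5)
Your proposal is correct and follows essentially the same route as the paper: define $U$ by deleting the images of the non-dominant components of the stable base locus, apply the global positivity of the height attached to $(\delta(X,A)+\epsilon)A[2]-E$ off that locus, and isolate the place $v$ via the fact that the local heights of the effective divisor $E$ at the remaining places are bounded below, so that $h_{E,v}(P,Q)\le h_{E}(P,Q)+O(1)$. The ``main obstacle'' you flag is resolved in the paper by exactly this one-line observation, and the descent of $U$ to $k$ is handled, as you suggest, by taking the union of Galois orbits of the components of the stable base locus.
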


\begin{proof}
We let $W = X \times X$ with projections $\pi_i: W \rightarrow X_i$ and we let $L = \pi_1^*A + \pi_2^*A$. We denote the blow up of the diagonal by $\alpha : W'\rightarrow W$ and its exceptional divisor by $E$.
Fix $\epsilon >  0$. Let $\overline{B} \subset \overline{W}'$ be the stable locus $\mathrm{SB}((\delta(X, A) + \epsilon)\alpha^*L - E)$ where $\overline{W}'$ is the base change to an algebraic closure. One can express $\overline{B}$ as the intersection of supports of finitely many effective $\mathbb R$-divisors which are $\mathbb R$-linearly equivalent to $(\delta(X, A) + \epsilon)\alpha^*L - E$. After taking some finite extension $k'$ of $k$, we may assume that these divisors are defined over $k'$ so does $\overline{B}$. We denote the union of the Galois orbits of $\overline{B}$ by $B'$.
Then it is a property of height functions that for any $(P, Q) \in W'(k)\setminus B'(k)$, we have
\[
0\leq h_{((\delta(X, A) + \epsilon)\alpha^*L - E)}(P,Q) + O(1) 
\]
From this, we may conclude that 
\[
h_{E, v}(P, Q) \leq h_{E}(P, Q) + O(1) \leq h_{((\delta(X, A) + \epsilon)\alpha^*L )}(P,Q) + O(1).
\]
Let $V \subset B'$ be a component not contained in $E$. Then one of projections $\pi_i\circ\alpha|_V$ is not dominant, and we denote its image by $F_V$. Now we define $U$ by $X \setminus \cup_{V}F_V$.
Our assertion follows for this $U$.
\end{proof}

\begin{rema}
\label{rema:minimum}
Note that $\delta(X, A)$ is defined as 
\[
\delta(X, A) = \inf\left\{s\in \mathbb R \middle| \begin{matrix}\text{ for any component $V \subset \mathrm{SB}(s\alpha^*L - E)$ not contained in $E$,} \\
\text{one of $\pi_i\circ \alpha|_V$ is not dominant to $X_i$. }
\end{matrix}\right\}.
\] 
If this is the minimum, then in the above proof, one does not need to introduce $\epsilon > 0$.
\end{rema}

\begin{rema}
\label{rema:s-inv}
When $A$ is ample, we may replace $\delta(X, A)$ by $s(X, A)$ in the above theorem. In this situation, one can take our exceptional set to be empty because of the emptiness of the base locus in the proof of Proposition~\ref{prop:s-invariant}.
\end{rema}

\section{Counting problems: general cases}
\label{sect:general}

In this section, we discuss some applications of Theorem~\ref{theo:repulsion} to the counting problems of rational points on algebraic varieties.

\subsection{Local Tamagawa measures}

Here we record some auxiliary results for local Tamagawa measures.
Let $X$ be a smooth projective variety defined over a number field $k$.
Let $v$ be a place of $k$. We fix a $v$-adic metrization on $\mathcal O(K_X)$ and it induces the Tamagawa measure $\tau_{X, v}$ on $X(k_v)$. We refer readers to \cite[Section 2.1.8]{volume} for its definition.

\begin{lemm}
\label{lemm: smallball}
Let $n = \dim X$.
There exists $C>0$ such that for sufficiently small $T$ and $P\in X(k_v)$, we have
\[
CT^n < \tau_{X, v}(\{Q \in X(k_v)\mid \mathrm{dist}_v(P, Q)<T\} ).
\]
\end{lemm}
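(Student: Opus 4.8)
The plan is to reduce this to a local statement on $k_v^n$ via $v$-adic analytic coordinates. First I would cover $X(k_v)$ by finitely many charts: since $X$ is smooth and projective of dimension $n$, each point has a $v$-adic analytic neighborhood analytically isomorphic (via an étale-type coordinate system) to a polydisc in $k_v^n$, on which the Tamagawa measure $\tau_{X,v}$ — being built from a smooth $v$-adic metric on $\mathcal O(K_X)$ and the local Haar measures $\mu_v$ — is given by $f(x)\,d\mu_v(x_1)\cdots d\mu_v(x_n)$ for a continuous, strictly positive density $f$. By compactness of $X(k_v)$, finitely many such charts suffice, and on the closure of each we get uniform upper and lower bounds $0 < c_1 \le f \le c_2 < \infty$. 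A comparable comparison is needed for the distance function: the $v$-adic distance $\mathrm{dist}_v$ (normalized as in \cite{Sil87}) is, up to multiplicative constants on each chart, comparable to the coordinate distance $\max_i |x_i - y_i|_v$; this is again a standard consequence of the definition of $h_{\Delta,v}$ together with compactness, shrinking the charts if necessary so that the ball $\{\mathrm{dist}_v(P,Q)<T\}$ for small $T$ stays inside the chart containing $P$.

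Granting these two comparisons, the statement becomes: a coordinate ball of radius $T$ in $k_v^n$ has $\mu_v^{\otimes n}$-measure $\gg T^n$. In the archimedean case $k_v=\mathbb R$ or $\mathbb C$ this is immediate, with the measure of such a ball being a fixed positive constant times $T^n$ (resp. $T^{2n}$, but with our normalization $|\cdot|_v$ on $\mathbb C$ the exponent works out to match the normalized absolute value). In the non-archimedean case, write $q_v$ for the residue cardinality; the ball $\{|x_i-y_i|_v \le q_v^{-m}\}$ for each $i$ has Haar measure a fixed constant times $q_v^{-mn}$, and choosing $m$ minimal with $q_v^{-m} \le T$ gives measure $\ge (\text{const})\, q_v^{-n}\, T^n$. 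Combining with the density lower bound $c_1$ from the chart, we obtain the desired $C T^n < \tau_{X,v}(\{\mathrm{dist}_v(P,Q)<T\})$ with $C$ independent of $P$ (uniformity over $P$ coming from the finiteness of the cover and the uniform positivity of $f$).

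The main obstacle I anticipate is the uniformity in $P$: one must ensure that the chart-by-chart constants can be chosen independently of the center $P$, and that for all $T$ below a fixed threshold the whole ball around $P$ lies inside a single chart. This is handled by a Lebesgue-number type argument on the compact space $X(k_v)$ — choose the threshold $T_0$ so that every $T_0$-ball is contained in some chart of the finite cover — after which the constants are taken to be the worst over the finitely many charts. A minor subtlety is matching normalizations: the measure $\tau_{X,v}$ and the distance $\mathrm{dist}_v$ must be compared using the same normalization of $|\cdot|_v$ fixed in Section~\ref{sec:height}, but since we only claim an inequality up to an unspecified constant $C$, any fixed normalization is harmless.
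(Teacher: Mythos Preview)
Your proposal is correct and follows essentially the same approach as the paper: a finite cover of $X(k_v)$ by analytic charts, a comparison of $\mathrm{dist}_v$ with the coordinate distance $\max_i |x_i(P)-x_i(Q)|_v$, uniform positivity of the Tamagawa density on each chart, and a Lebesgue-number argument (the paper's $d_m = \min_P \max_l d_l(P)$) to make the threshold for $T$ independent of $P$. The only presentational difference is that the paper makes the distance comparison explicit by working on $Y = X\times X$, cutting out the diagonal locally by $n$ divisors $D_{i,1},\ldots,D_{i,n}$ and using $\mathrm{dist}_v(P,Q)\le C\max_j H_{D_{i,j}}^{-1}(P,Q)$, with those same divisors then furnishing the local coordinates; your appeal to the standard comparability from \cite{Sil87} amounts to the same thing.
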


\begin{proof}
Let $Y = X \times X$. We take a finite open cover $\{U_i\}$ of $Y$ such that on $U_i$, $\Delta$ is the scheme-theoretic intersection of $D_{i, 1}, \cdots, D_{i, n}$ where $D_i = \sum D_{i, j}$ is a strict normal crossings divisor on $U_i$. On $U_i(k_v)$, there exists $C > 0$ such that
\[
\mathrm{dist}_v(P, Q) < C \max_j \{H_{D_{i, j}}^{-1}(P, Q)\}
\]
for all $(P, Q) \in U_i(k_v)$.
For each $P \in X_i(k_v)$, there exists a $v$-adic open neighborhood $V_P \subset X(k_v)$ such that $\overline{V}_P \times \overline{V}_P \subset U_i(k_v)$ for some $i$ and $D_{i, j}$ induces local coordinates $x_{i, j}$ on $V_P$. Since $X(k_v)$ is compact, finitely many $V_P$ covers $X(k_v)$. We denote them by $V_l$. For each $l$ let $$\omega_l = \mathrm d x_{l, 1} \wedge \cdots \wedge  \mathrm d x_{l, l}.$$
Then on $V_l$ we have a uniform upper bound $C' > 0$ such that
\[
\|\omega_l\|_v < C'.
\]
Also let $d_l(P) = \min \{\mathrm{dist}_v(P, Q) \mid Q \in  V_l^c\}$
and we define $d(P) = \max_l\{d_l(P) \}$. Then $d(P) >0$ for any $P \in X(k_v)$ so there is the minimum $d_m = \min\{d(P)\} > 0$. 
Now by the definition of the Tamagawa measure, for $0< T < d_m$, we have
\[
\tau_{X, v}(\{Q \in X(k_v)\mid \mathrm{dist}_v(P, Q)<T\} ) > \int_{\{\max\{|x_{l,i}-x_{l,i}(P)|_v\} < C^{-1}T\}}\|\omega_l\|_v^{-1} |\omega_l|
\]
Thus our assertion follows.

\end{proof}

The local Tamagawa number is defined by
\[
\tau_v(X) = \tau_{X, v}(X(k_v)).
\]

\subsection{General estimates}

Let $X$ be a projective variety defined over a number field $k$ and $L$ be a big $\mathbb Q$-divisor on $X$. We fix an adelic metrization on $\mathcal O(L)$ and consider the induced height:
\[
H_L : X(F) \rightarrow \mathbb R_{>0}.
\]
For each Zariski open subset $U \subset X$ we define the counting function:
\[
N(U, L, T) = \#\{ P \in U(k)\mid H_L(P) \leq T\}.
\]
Here is a general result using Repulsion principle:
\begin{theo}
\label{theo:general}
Let $X$ be a normal projective variety of dimension $n$ defined over a number field $k$ and $L$ be a big $\mathbb Q$-Cartier divisor on $X$. We fix an adelic metrization on $\mathcal O(L)$. Then for any $\epsilon > 0$ there exists a non-empty Zariski open subset $U = U(\epsilon) \subset X$ such that  we have
\[
N(U, L, T) = O(T^{2n\delta(X, L) + \epsilon}).
\]
\end{theo}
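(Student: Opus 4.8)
The strategy is to combine the Repulsion principle (Theorem~\ref{theo:repulsion}) with a volume-packing argument on $X(k_v)$, following the counting method of \cite{Mck11}. First I would fix a place $v$ of $k$, apply Theorem~\ref{theo:repulsion} to $A = L$ (after choosing a suitable ample divisor or enlarging $L$ if necessary so that the hypotheses are met; since $L$ is big this is harmless up to shrinking $U$), and obtain $\epsilon' = \epsilon/(4n)$, a constant $C_1 > 0$, and an open set $U = U(\epsilon') \subset X$ such that
\[
\mathrm{dist}_v(P, Q) > C_1 \bigl(H_L(P)H_L(Q)\bigr)^{-(\delta(X, L) + \epsilon')}
\]
for all distinct $P, Q \in U(k)$. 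After possibly passing to a smooth open subset (replacing $X$ by a resolution, using that $\delta$ is a birational invariant by Lemma~\ref{lemm:birational}) I may assume the local Tamagawa measure $\tau_{X, v}$ and Lemma~\ref{lemm: smallball} are available on $U(k_v)$.

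Next comes the packing step. Set $S(T) = \{ P \in U(k) \mid H_L(P) \le T\}$, which we want to bound. For $P, Q \in S(T)$ distinct, the Repulsion principle forces
\[
\mathrm{dist}_v(P, Q) > C_1 \bigl(T \cdot T\bigr)^{-(\delta(X, L) + \epsilon')} = C_1 T^{-2(\delta(X, L) + \epsilon')} =: r(T).
\]
Thus the points of $S(T)$ are $r(T)$-separated in the $v$-adic metric. Around each such $P$ place the ball $B(P, r(T)/2)$; these balls are pairwise disjoint. By Lemma~\ref{lemm: smallball}, for $T$ large enough (so that $r(T)/2$ is small), each ball has $\tau_{X, v}$-measure at least $C_2 (r(T)/2)^n$. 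Since the disjoint union of these balls sits inside $U(k_v) \subset X(k_v)$, whose total measure $\tau_v(X)$ is finite, we get
\[
\# S(T) \cdot C_2 (r(T)/2)^n \le \tau_v(X),
\]
hence $\# S(T) = O\bigl( r(T)^{-n} \bigr) = O\bigl( T^{2n(\delta(X, L) + \epsilon')} \bigr) = O\bigl( T^{2n\delta(X, L) + \epsilon/2} \bigr)$, which is even stronger than the claimed bound. Renaming $\epsilon$ finishes the argument.

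The step I expect to be the main obstacle is making the measure lower bound on balls genuinely uniform in the center $P$ as $T \to \infty$, i.e. checking that the constant $C_2$ and the threshold radius in Lemma~\ref{lemm: smallball} do not degenerate — this is exactly what Lemma~\ref{lemm: smallball} is designed to handle via the compactness of $X(k_v)$ and the finite cover by coordinate charts, so the real work is already done there; what remains is the bookkeeping of matching the exceptional set of the Repulsion principle with the locus removed for the resolution, and confirming that $S(T) \subset U(k)$ consists of points where all the local height comparisons hold. A secondary point of care is that $L$ is only big, not ample, so there is no Northcott finiteness a priori; but this is not needed — the packing bound directly produces finiteness of $S(T)$ as a byproduct, and the bigness of $L$ is used only through the existence of the height and through Theorem~\ref{theo:repulsion}.
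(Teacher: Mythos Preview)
Your proposal is correct and follows essentially the same approach as the paper: reduce to the smooth case via the birational invariance of $\delta$, apply the Repulsion principle (Theorem~\ref{theo:repulsion}) at a fixed place $v$ to obtain $r(T)$-separation of the height-bounded rational points, then use Lemma~\ref{lemm: smallball} and the finiteness of $\tau_v(X)$ in a standard packing argument. The only superfluous step is your parenthetical about ``enlarging $L$'' --- Theorem~\ref{theo:repulsion} already applies directly to any big $\mathbb Q$-Cartier divisor, so no modification of $L$ is needed.
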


\begin{proof}
We may assume that $X$ is smooth after applying a resolution.
This does not affect the invariant $\delta(X, L)$ because of Lemma~\ref{lemm:birational}.
Let $v$ be a place of $k$.
By Theorem~\ref{theo:repulsion}, for any $\epsilon > 0$ there exists a non-empty Zariski open subset $U(\epsilon)\subset X$ such that  there exists $C = C_\epsilon >0$ such that 
\[
\mathrm{dist}_v(P, Q) > C(H_L(P)H_L(Q))^{-(\delta(X, L) + \epsilon)},
\]
for any $P, Q \in U(k)$ with $P \neq Q$.
We define
\[
A_T = \{P \in U(k) \mid H_L(P) \leq T\}.
\]
For $P \in A_T$, we define the $v$-adic ball by
\[
B_T(P) = \{R \in U(k_v) \mid \mathrm{dist}_v(P, R)< \frac{1}{2}CT^{-2(\delta(X, L) + \epsilon)}\}
\]
Then $\cup_{P \in A_T} B_T(P)$ is disjoint because of the triangle inequality. Hence we have
\[
\tau_v(X) > \sum_{P \in A_T} \tau_{X, v}(B_T(P) ) \gg N(U, L, T)T^{-2n(\delta(X, L) + \epsilon)}
\] 
by Lemma~\ref{lemm: smallball}. Thus our assertion follows.
\end{proof}
\begin{rema}
In the case that $\delta(X, L)$ is the minimum, then one does not need to introduce $\epsilon$ in the above theorem because of Remark~\ref{rema:minimum}.
\end{rema}
\begin{rema}
\label{rema:s}
In the above theorem, assuming $L$ is ample and $X$ is smooth we may replace $\delta(X, L)$ by $s(X, L)$ because of Remark~\ref{rema:s-inv}. In this case, one can take $U =X$.
\end{rema}

In view of Manin's conjecture, we expect the following is true:
\begin{conj}
Let $X$ be a geometrically rationally connected smooth projective variety of dimension $n$ and $L$ be a big and nef $\mathbb Q$-divisor on $X$. Then we have
\[
a(X, L) \leq 2n \delta(X, L).
\]
\end{conj}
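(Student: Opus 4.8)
The plan is to argue by contradiction, using the covering--curve criterion of Lemma~\ref{lemm:coveringcurves}. Write $\delta=\delta(X,L)$ and $a=a(X,L)$, and suppose $a>2n\delta$, i.e.\ that $K_X+2n\delta L$ is not pseudo-effective on $X$.

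First I would invoke the duality of \cite{BDPP} between $\overline{\mathrm{Eff}}^1(X)$ and the cone of movable curves: the failure of $K_X+2n\delta L$ to be pseudo-effective produces a family $\{D_u\}$ of irreducible curves covering $X$ with $(K_X+2n\delta L)\cdot D_u<0$. Because $L$ is nef this forces $-K_X\cdot D_u>0$, and because $L$ is big we have $L\cdot D_u>0$; rearranging, the slope obeys $\tfrac{L\cdot D_u}{-K_X\cdot D_u}<\tfrac{1}{2n\delta}$. The crucial geometric input I would then want is a bend-and-break argument applied to $\{D_u\}$ yielding a family $\{C_t\}$ of rational curves still covering $X$ and satisfying both (a) $-K_X\cdot C_t\le n+1$ and (b) $\tfrac{L\cdot C_t}{-K_X\cdot C_t}\le\tfrac{L\cdot D_u}{-K_X\cdot D_u}<\tfrac{1}{2n\delta}$. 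This is the step I expect to be the main obstacle, and the reason the statement is only a conjecture: classical bend-and-break produces covering rational curves of bounded anticanonical degree, but to keep the $L$-degree --- equivalently the slope --- from growing one must, at each breaking step, extract a piece on which the mediant inequality $\min\!\bigl(\tfrac{L\cdot A}{-K_X\cdot A},\tfrac{L\cdot B}{-K_X\cdot B}\bigr)\le\tfrac{L\cdot(A+B)}{-K_X\cdot(A+B)}$ preserves a slope bound while the broken pieces stay $K_X$-negative; making this simultaneous control rigorous (perhaps in the spirit of the rational-curve arguments used to analyze the $a$-invariant, cf.\ \cite{LTT14}) is precisely the hard part.

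Granting such a family, the rest is routine. Combining (a) and (b) and using $n+1\le 2n$ for $n\ge 1$, I get
\[
L\cdot C_t<\tfrac{1}{2n\delta}\,(-K_X\cdot C_t)\le\tfrac{n+1}{2n\delta}\le\tfrac1\delta,
\]
and, since all members of the family are numerically equivalent, $L\cdot C_t$ is a positive constant $<1/\delta$. Next I would fix a general point $P\in X$ and a member $C_{t(P)}$ through $P$, so that $\mathrm{mult}_P C_{t(P)}=1$, and form the strict transform $\widetilde C$ of $C_{t(P)}$ inside the fiber $F_P\cong\mathrm{Bl}_P X$ of $\pi_1\circ\alpha:W'\to X_1$. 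As $P$ varies, $P$ covers a Zariski-dense subset of $X_1$; the image $(\pi_2\circ\alpha)(\widetilde C)=C_{t(P)}$ is an irreducible curve through $P$; and exactly as in Lemma~\ref{lemm:lowerboundfordelta} one has $L[2]\cdot\widetilde C=L\cdot C_{t(P)}$ and $E\cdot\widetilde C=\mathrm{mult}_P C_{t(P)}=1$. Hence $\bigl(s_0L[2]-E\bigr)\cdot\widetilde C=0$ with $s_0:=\tfrac{1}{L\cdot C_{t(P)}}>\delta$, and Lemma~\ref{lemm:coveringcurves} applied to the family $\{\widetilde C\}$ yields $\delta(X,L)\ge s_0>\delta(X,L)$, a contradiction. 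Therefore $a(X,L)\le 2n\delta(X,L)$.

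I would also record two features of this argument. The slack in $n+1\le 2n$ shows the constant $2n$ should be far from sharp --- consonant with the paper's question of whether $2\alpha<2\delta(X,-K_X)\dim X$ --- so a more economical covering family ought to improve it. And specializing to $L=-K_X$ (where $a=1$) the argument would give $\delta(X,-K_X)\ge\tfrac1{2n}$ for every rationally connected $X$, consistent with all the examples computed in the paper. The genuinely open point remains the slope-controlled bend-and-break above: without a bound on $L\cdot C_t$ the displayed chain of inequalities breaks down, so the conjecture really reduces to producing low-anticanonical-degree covering rational curves that inherit the $(K_X+2n\delta L)$-negativity of the \cite{BDPP} family.
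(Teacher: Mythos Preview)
The statement you are attempting to prove is labeled a \emph{Conjecture} in the paper, and the paper offers no proof. Its only justification is heuristic: Theorem~\ref{theo:general} gives $N(U,L,T)=O_\epsilon(T^{2n\delta(X,L)+\epsilon})$, while weak Manin's conjecture predicts growth of order $T^{a(X,L)}$, so compatibility of the two would force $a(X,L)\le 2n\delta(X,L)$. There is thus no proof in the paper to compare your argument against.

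Your proposal is a coherent strategy, and you have correctly isolated the obstruction yourself. The step ``bend-and-break the BDPP family $\{D_u\}$ to a covering family of rational curves $\{C_t\}$ with simultaneously $-K_X\cdot C_t\le n+1$ and $\tfrac{L\cdot C_t}{-K_X\cdot C_t}\le\tfrac{L\cdot D_u}{-K_X\cdot D_u}$'' is not available in the literature and is a genuine gap. Classical bend-and-break fixes a point and bounds $-K_X\cdot C$, but the components of a degeneration are only constrained in aggregate: if $D_u$ breaks as $A+B$ with both pieces $K_X$-negative, the mediant inequality does hand you a component of slope at most that of $D_u$, but that component need not be the one through the fixed point, and further breaking can push the low-slope component away from the point at each stage; moreover some components may fail to be $K_X$-negative, so the mediant does not even apply. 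Keeping the point \emph{and} the slope bound through the iteration is precisely the open problem, and is why the paper records the inequality as a conjecture. Everything downstream of that step---the intersection numbers $L[2]\cdot\widetilde C=L\cdot C_{t(P)}$ and $E\cdot\widetilde C=\mathrm{mult}_P C_{t(P)}=1$ inside the fiber $F_P$, and the appeal to Lemma~\ref{lemm:coveringcurves}---is correct and exactly parallels the mechanism of Lemma~\ref{lemm:lowerboundfordelta}.
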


\section{K3 surfaces and Enriques surfaces}
\label{sec:K3}
In this section we discuss applications of Theorem~\ref{theo:general} to surfaces of Kodaira dimension $0$.
Let $S$ be a K3 surface or an Enriques surface with a polarization $H$ of degree $2d$. In this section, we obtain an upper bound for $s(X, H)$ using \cite{BM14} and \cite{Nuer}. Let $W$ be the blow-up of $S \times S$ along the diagonal and we denote the exceptional divisor by $E$. We also consider the Hilbert Scheme of two points on $S$, i.e., $\mathrm{Hilb}^{[2]}(S)$. The variety $\mathrm{Hilb}^{[2]}(S)$ comes with the divisor $H(2)$ induced by $H$ and a divisor class $B$ such that $2B$ is the class of the exceptional divisor of the Hilbert-Chow morphism. The variety $W$ admits a degree $2$ finite morphism $f : W \rightarrow \mathrm{Hilb}^{[2]}(S)$ and we have
\[
f^*H(2) = H[2], \quad f^*B = E
\]
We then have
\[
\text{$sH[2] - E$ is nef} \iff \text{$sH(2) - B$ is nef},
\]
because of $f^*(sH(2) - B) = sH[2] - E$.
Thus one needs to study the nef cone of $\mathrm{Hilb}^{[2]}(S)$
and this is studied in \cite{HT01}, \cite{HT09}, and \cite{BM14} for K3 surfaces.
We use results from \cite{BM14} for the nef cone of $\mathrm{Hilb}^{[2]}(S)$.
Here is the theorem:
\begin{theo}{\cite{BM14}}
Let $S$ be a K3 surface with a polarization $H$ of degree $2d$ such that $\mathrm{Pic}(S) = \mathbb ZH$. Then we have
\[
s(S, H) \leq \sqrt{\frac{4}{d} + \frac{5}{d^2}}.
\]
\end{theo}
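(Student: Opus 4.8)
The plan is to reduce the bound to the determination of the extremal ray of $\mathrm{Nef}(X)$ other than $\mathbb R_{\ge 0}H(2)$, where $X:=\mathrm{Hilb}^{[2]}(S)$, and then apply the Bayer--Macr\`i classification of walls. By the discussion preceding the statement we already have
\[
s(S,H)=\inf\{\,s\in\mathbb R\mid sH(2)-B\text{ is nef on }X\,\},
\]
since $f^*\bigl(sH(2)-B\bigr)=sH[2]-E$ and $f$ is finite surjective. Now $X$ is an irreducible holomorphic symplectic fourfold with $\rho(X)=2$, $N^1(X)_{\mathbb R}=\mathbb R H(2)\oplus\mathbb R B$, carrying the Beauville--Bogomolov--Fujiki form $q$ for which $q(H(2))=H^2=2d$, $q(B)=-2$ and $q(H(2),B)=0$. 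The class $H(2)$ is nef: it is semiample, pulled back from the polarization on $\mathrm{Sym}^2 S$ along the Hilbert--Chow morphism. Hence $\mathbb R_{\ge 0}H(2)$ is one of the two extremal rays of the two-dimensional cone $\mathrm{Nef}(X)$; writing the other one as $\mathbb R_{\ge 0}\bigl(s_0H(2)-B\bigr)$ we get $s_0=s(S,H)$, and it suffices to prove $s_0^2\le\tfrac4d+\tfrac5{d^2}$.

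Next I would feed in \cite{BM14}. Present $X=M_\sigma(v)$ for a generic stability condition $\sigma$ in the Gieseker chamber and $v=(1,0,-1)\in\widetilde H(S,\mathbb Z)$, so $\langle v,v\rangle=2$. For $\langle v,v\rangle=2$ the Mukai map $\theta\colon v^{\perp}\xrightarrow{\ \sim\ }H^2(X,\mathbb Z)$ is an isometry and an isomorphism, and on algebraic classes it identifies $v^{\perp}_{\mathrm{alg}}=\mathbb Z a\oplus\mathbb Z b$, where $a=(1,0,1)$, $b=(0,H,0)$ (so $\langle a,a\rangle=-2$, $\langle b,b\rangle=2d$, $\langle a,b\rangle=0$), with $\theta(b)=H(2)$ and $\theta(a)=\pm B$. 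By the Bayer--Macr\`i description of $\mathrm{Nef}(M_\sigma(v))$, the extremal ray of $\mathrm{Nef}(X)$ opposite to $\mathbb R_{\ge 0}H(2)$ is cut out by a wall attached to a primitive rank-two sublattice $\mathcal H\subseteq\widetilde H(S,\mathbb Z)_{\mathrm{alg}}$ of signature $(1,1)$ with $v\in\mathcal H$; the corresponding contracted curve class is proportional to $\theta(\overline w)$, where $w\in\mathcal H$ may be chosen to be either a spherical class ($\langle w,w\rangle=-2$) with $0\le\langle w,v\rangle\le 1$ or an isotropic class ($\langle w,w\rangle=0$) with $1\le\langle w,v\rangle\le 2$, and $\overline w:=w-\tfrac12\langle w,v\rangle\,v$ is its $v^{\perp}$-projection.

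The remaining step is a short lattice computation. Write $\overline w=\alpha a+c\,b$ with $\alpha\in\mathbb Q$ and $c\in\mathbb Z$; here $c$ is the $H$-coefficient of $w$ itself, because $v$ has no $H$-component and subtracting a multiple of $v$ does not affect it. If $c=0$ then $\overline w\in\mathbb Q a$, hence $\theta(\overline w)\in\mathbb Q B$ and the corresponding wall is the ray $\mathbb R_{\ge 0}H(2)$ itself, which we have excluded; therefore $|c|\ge 1$. Since $\theta(\overline w)=cH(2)\pm\alpha B$, imposing $q\bigl(s_0H(2)-B,\,\theta(\overline w)\bigr)=0$ yields $s_0=|\alpha|/(d|c|)$. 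On the other hand $q\bigl(\theta(\overline w)\bigr)=\langle\overline w,\overline w\rangle=\langle w,w\rangle-\tfrac12\langle w,v\rangle^2$, which over the admissible pairs $(\langle w,w\rangle,\langle w,v\rangle)\in\{(-2,0),(-2,1),(0,1),(0,2)\}$ takes the values $-2,-\tfrac52,-\tfrac12,-2$; in particular $q(\theta(\overline w))\ge-\tfrac52$. Computing directly, $q(\theta(\overline w))=q(cH(2)\pm\alpha B)=2dc^2-2\alpha^2$, so $\alpha^2\le dc^2+\tfrac54$, and therefore
\[
s_0^2=\frac{\alpha^2}{d^2c^2}\ \le\ \frac1d+\frac5{4d^2c^2}\ \le\ \frac1d+\frac5{4d^2}\ \le\ \frac4d+\frac5{d^2},
\]
which is the claim.

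The hard part is the appeal to \cite{BM14} in the second paragraph: one must extract the exact statement of their nef-cone theorem and the classification of the relevant walls specialized to $X=\mathrm{Hilb}^{[2]}$ of a K3 surface of Picard rank one --- equivalently, that the extremal curve opposite the Hilbert--Chow contraction has Beauville--Bogomolov square at least $-\tfrac52$ (the $n=2$ case of the Hassett--Tschinkel bound, which \cite{BM14} establishes) --- together with the underlying lattice bookkeeping: that $\theta$ is an isometry $v^{\perp}\cong H^2(X,\mathbb Z)$ for $n=2$, the values $q(H(2))=2d$ and $q(B)=-2$, the identifications $\theta(b)=H(2)$, $\theta(a)=\pm B$, and the fact that $H(2)$ spans the semiample extremal ray. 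Everything after that is elementary; in fact the argument gives the slightly sharper bound $s(S,H)\le\sqrt{\tfrac1d+\tfrac5{4d^2}}$.
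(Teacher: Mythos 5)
Your argument is correct and reaches the stated bound (indeed a sharper one), but it takes a genuinely different route through \cite{BM14} than the paper does. The paper quotes the ready-made output of Bayer--Macr\`i's Section 13 and splits into cases: whether the Pell equation $X^2-4dY^2=5$ has a suitable solution, whether the nef and movable cones of $\mathrm{Hilb}^{[2]}(S)$ coincide, and whether $d$ is a square, reading off $s(S,H)$ in each case as an explicit ratio $x_1/(dy_1)$ or $1/\sqrt d$ and bounding it. You instead go one level up in \cite{BM14}: you invoke the lattice-theoretic wall classification for $M_\sigma(v)$ with $\langle v,v\rangle=2$ and extract the single numerical consequence $q(\theta(\overline w))\ge -\tfrac52$ for the class cutting out the second extremal ray (the $n=2$ Hassett--Tschinkel bound), after which a short Beauville--Bogomolov computation gives a uniform, case-free estimate. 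What your route buys is that uniformity plus the sharper constant $\sqrt{1/d+5/(4d^2)}$; your computation in fact exposes a normalization slip in the paper's first case (the equation $x^2-4dy^2=5$ forces $y$ odd, so the hypothesis ``$y_1$ even'' is vacuous as written; the extremal nef class is $x_1H(2)-2dy_1B$, so $s=x_1/(2dy_1)$ rather than $x_1/(dy_1)$, which is exactly why the paper's bound is twice yours). What the paper's route buys is that it only cites finished statements (\cite[Prop.~13.1, Lemma~13.3]{BM14}) instead of re-running the wall bookkeeping, which is where all the risk in your version sits.

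Two small points to patch. First, your second paragraph presumes that the extremal ray of $\mathrm{Nef}(X)$ opposite to $\mathbb R_{\ge 0}H(2)$ is cut out by a wall; if no spherical or isotropic class of the required type exists, that ray lies on the boundary of the positive cone, so $q(s_0H(2)-B)=2ds_0^2-2=0$ and $s_0=1/\sqrt d$, which again satisfies the bound --- this is precisely the ``nef cone equals movable cone'' branch that the paper handles explicitly. Second, the Bayer--Macr\`i classification also allows flopping walls coming from decompositions $v=w+w'$ into positive classes; for $\langle v,v\rangle=2$ these reduce to your four listed pairs (or give $q(\theta(\overline w))\ge -2$), but that reduction should be stated rather than left implicit. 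With those two sentences added the proof is complete.
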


\begin{proof}
We recall a result on the nef boundary of $sH(2) - B$ based on properties of certain Pell's equation.
First we consider
\[
X^2 - 4dY^2 = 5.
\]
Suppose that there is a non-trivial solution  $(x_1, y_1)$ with $x_1 > 0$ minimal and $y_1 > 0$ even. Then it follows from \cite[Lemma 13.3]{BM14} that
\[
s(S, H) = \frac{x_1}{dy_1}\leq \sqrt{\frac{4}{d} + \frac{5}{d^2}}.
\] 
Next suppose that there is no non-trivial solution to the above Pell's equation. Then for $\mathrm{Hilb}^{[2]}(S)$, the nef cone and the movable cone coincides by \cite[Lemma 13.3]{BM14}. Suppose that $d$ is a square. Then it follows from \cite[Proposition 13.1]{BM14} that
\[
s(S, H) = \frac{1}{\sqrt{d}}.
\]
Next suppose that $d$ is not a square. We consider the following Pell's equation:
\[
X^2 - dY^2 = 1.
\]
This has a solution. Let $x_1, y_1 > 0$ be the solution with $x_1$ minimal. Then by \cite[Proposition 13.1]{BM14}, we have
\[
s(S, H) = \frac{x_1}{dy_1}\leq \sqrt{\frac{1}{d} + \frac{1}{d^2}}.
\]
Thus our assertion follows.
\end{proof}

Now Theorem~\ref{theo:K3} follows from the above theorem and Remark~\ref{rema:s}.
We also obtain bounds for Enriques surfaces:

\begin{theo}[\cite{Nuer}]
\label{theo:enriques}
Let $Y$ be an unnodal Enriques surface, i.e., $Y$ contains no curve of negative self-intersection.
Let $H$ be a $k$-very ample divisor. Then we have
\[
s(Y, H)\leq \frac{2}{k+2}
\]
\end{theo}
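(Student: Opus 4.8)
The statement to prove is $s(Y,H)\leq \frac{2}{k+2}$ for an unnodal Enriques surface $Y$ with a $k$-very ample divisor $H$. As in the K3 case, the strategy is to translate a bound on $s(Y,H)$ into a statement about the nef cone of the Hilbert scheme $\mathrm{Hilb}^{[2]}(Y)$. Recall that $W$, the blow-up of $Y\times Y$ along the diagonal, admits a degree $2$ morphism $f:W\to\mathrm{Hilb}^{[2]}(Y)$ with $f^*H(2)=H[2]$ and $f^*B=E$, so that $sH[2]-E$ is nef if and only if $sH(2)-B$ is nef. Hence $s(Y,H)$ is exactly the infimum of $s$ such that $sH(2)-B$ lies in the nef cone of $\mathrm{Hilb}^{[2]}(Y)$, and it suffices to exhibit a wall or extremal nef class of $\mathrm{Hilb}^{[2]}(Y)$ that forces $\tfrac{k+2}{2}H(2)-B$ to be nef.

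The key input is Nuer's description (\cite{Nuer}) of the nef cone of $\mathrm{Hilb}^{[2]}(Y)$ for unnodal $Y$, obtained via Bridgeland stability and the Bayer--Macr\`i machinery. The point is that $k$-very ampleness of $H$ is precisely the numerical/Bridgeland-stability condition that guarantees that the ideal sheaves parametrized by $\mathrm{Hilb}^{[2]}(Y)$ remain stable across the relevant region of the stability manifold, up to the wall determined by $k$; this is the analogue of the role played by the Pell-equation solutions in the K3 computation of \cite{BM14}. Concretely, Nuer produces a nef divisor class on $\mathrm{Hilb}^{[2]}(Y)$ of the shape $\tfrac{k+2}{2}H(2)-B$ (or shows the corresponding ray is a boundary ray of $\mathrm{Nef}$), which is exactly the inequality we want. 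So the proof is: invoke the identification $s(Y,H)=\inf\{s\mid sH(2)-B\ \text{nef}\}$, then cite the relevant result of \cite{Nuer} applied to the $k$-very ample class $H$ to conclude $\tfrac{k+2}{2}H(2)-B$ is nef, hence $s(Y,H)\leq\tfrac{2}{k+2}$.

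The main obstacle is purely one of bookkeeping: matching the normalization of the divisor classes $H(2)$ and $B$ used here with the conventions in \cite{Nuer} (in particular the factor of $2$ relating $B$ to the Hilbert--Chow exceptional divisor, and how $k$-very ampleness enters the numerical criterion), and checking that the unnodal hypothesis is exactly what lets one rule out any further walls between the given class and the nef boundary. There is no hard new geometry to do here beyond correctly importing Nuer's theorem; the analytic content — converting nefness of $sH(2)-B$ into the Repulsion principle and then into the counting bound — is already handled by Theorem~\ref{theo:general} together with Remark~\ref{rema:s}.
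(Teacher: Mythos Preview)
Your high-level strategy is correct: reduce to the nef cone of $\mathrm{Hilb}^{[2]}(Y)$ via the degree-$2$ cover $f:W\to\mathrm{Hilb}^{[2]}(Y)$, and then invoke Nuer's description of that nef cone. However, there is a missing intermediate step, and your account of what Nuer actually proves is not accurate.

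Nuer's result (\cite[Theorem 12.3]{Nuer}) does not mention $k$-very ampleness at all. For an unnodal Enriques surface it yields the exact formula
\[
s(Y,H)=\frac{2}{\phi(H)},
\]
where $\phi(H)$ is the Cossec--Dolgachev function of $H$. The paper's proof then uses a second, independent ingredient: by \cite[Theorem 2.4]{Szem}, if $H$ is $k$-very ample on such a surface then $\phi(H)\geq k+2$. Combining the two gives $s(Y,H)=2/\phi(H)\leq 2/(k+2)$.

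So the logical chain is: \emph{Nuer} $\Rightarrow$ $s(Y,H)=2/\phi(H)$, then \emph{Szemberg} $\Rightarrow$ $\phi(H)\geq k+2$. Your plan collapses these into the single claim that ``Nuer produces a nef class of shape $\tfrac{k+2}{2}H(2)-B$ directly from $k$-very ampleness''; that is not what is in \cite{Nuer}, and without the Szemberg step the argument is incomplete. Your remark that the remaining obstacle is ``purely bookkeeping'' undersells this: the inequality $\phi(H)\geq k+2$ for $k$-very ample $H$ is a genuine statement about linear systems on Enriques surfaces and needs its own reference or proof.
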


\begin{proof}
It follows from \cite[Theorem 12.3]{Nuer} that 
\[
s(Y, H) = \frac{2}{\phi(H)},
\]
where $\phi(H)$ is the Cossec-Dolgachev function. Then it follows from \cite[Theorem 2.4]{Szem} that $\phi(H) \geq k+2$. Thus our assertion follows.
\end{proof}

For a necessary and sufficient condition for $k$-very ampleness, see \cite[Proposition 2.3]{Szem}

\section{Manin type upper bounds for Fano conic bundles}
\label{sect:countingonconicbundle}

In this section, we study the counting problems of rational points on conic bundles.

\begin{defi}
Let $f: X \rightarrow S$ be a flat projective morphism between smooth projective varieties. The fibration $f$ is a conic bundle if there exist a rank $3$ vector bundle $\mathcal E$ on $S$ and an embedding $X \rightarrow \mathbb P_S(\mathcal E)$ over $S$ such that every fiber $X_s$ is isomorphic to a conic in $\mathbb P^2_s$ where $X_s$ and $\mathbb P^2_s$ are fibers of $X$ and $\mathbb P(\mathcal E)$ at $s \in S$.

Suppose that $X$ is $3$-dimensional. Then if every fiber $X_s$ is isomorphic to a conic in $\mathbb P^2$, then $f_*\omega_X^{-1}$ is a rank $3$ vector bundle on $S$ and a natural map $X \rightarrow \mathbb P_S(f_*\omega_X^{-1})$ is an embedding by \cite[Proposition 6.2]{MM83}. In this way $f :X \rightarrow S$ is a conic bundle in the above sense.
\end{defi}

\begin{lemm}
\label{lemm:generalconicbundle}
Let $f: X \rightarrow S$ be a conic bundle and $H$ be a big $\mathbb Q$-divisor on $X$.
Suppose that for a fiber $X_s$ of $f$, we have $H. X_s = 2$, i.e., $X_s$ is a $H$-conic. Then we have
\[
\delta(X, H) \geq \frac{1}{2}.
\]
\end{lemm}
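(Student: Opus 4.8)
The plan is to construct an explicit covering family of curves on $W'$ that is orthogonal to $\tfrac{1}{2}H[2]-E$ and apply Lemma~\ref{lemm:coveringcurves} with $s_0 = \tfrac12$. The natural candidate is the family of fibers of the conic bundle. Concretely, for a general point $P \in X_1$ lying on a general fiber $X_{s(P)}$ of $f$, I would consider inside the fiber $F_P = (\pi_1\circ\alpha)^{-1}(P) \subset W'$ the strict transform $C_P$ of $\{P\}\times X_{s(P)} \subset \{P\}\times X_2$. Since $X_{s(P)}$ is a smooth conic (for general $s$), it is an irreducible curve, it contains the point $P$ (because $P \in X_{s(P)}$), and as $P$ varies over a dense open subset of $X$ the points $P$ clearly cover a Zariski open subset of $X_1$; so hypotheses (i) and (ii) of Lemma~\ref{lemm:coveringcurves} are met, with the image of $C_P$ in $X_2$ being the conic $X_{s(P)}$ through $P$.

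The remaining point is the intersection computation (iii): I need $(\tfrac12 H[2] - E).C_P = 0$. Writing $C_P$ for the strict transform of the conic $\{P\}\times X_{s(P)}$ under the blow-up $\alpha$ of the diagonal, I would compute the two intersection numbers separately. For the first, $H[2].C_P = (\pi_1^*H + \pi_2^*H).C_P$; since $\pi_1\circ\alpha$ contracts $C_P$ (it lies in a single fiber $F_P$), the $\pi_1^*H$ term contributes $0$, while $\pi_2\circ\alpha$ maps $C_P$ isomorphically onto the conic $X_{s(P)}$, so the $\pi_2^*H$ term contributes $H.X_{s(P)} = 2$ by hypothesis. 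Hence $H[2].C_P = 2$. For the second, $E.C_P = \mathrm{mult}_P(\{P\}\times X_{s(P)})$ computed along the diagonal, i.e.\ the local intersection multiplicity of the conic $\{P\}\times X_{s(P)}$ with the diagonal $\Delta$ at the point $(P,P)$; since the conic $X_{s(P)}$ is smooth at $P$ (for general $P$) and meets the relevant slice transversally, this multiplicity is $1$. Therefore $(\tfrac12 H[2]-E).C_P = \tfrac12\cdot 2 - 1 = 0$, and Lemma~\ref{lemm:coveringcurves} gives $\delta(X,H) \ge \tfrac12$.

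The step I expect to require the most care is the transversality claim $E.C_P = 1$, i.e.\ verifying that the strict transform $C_P$ meets $E$ in a single reduced point, equivalently that $\{P\}\times X_{s(P)}$ is smooth and meets $\Delta$ transversally at $(P,P)$ for a general choice of $P$. One must check that the conic through a general point is smooth (true for a conic bundle, where the discriminant is a proper closed subset of $S$, so a general fiber is smooth) and that its tangent direction at $P$ is not contained in the diagonal direction — but $\{P\}\times X_{s(P)}$ has tangent space $0 \oplus T_P X_{s(P)}$ at $(P,P)$ while $\Delta$ has tangent space the diagonal $\{(v,v)\}$, and these intersect only in $0$, so the intersection at $(P,P)$ is transverse and $E.C_P = 1$. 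Once this is in hand the numerical identity is immediate and the result follows from Lemma~\ref{lemm:coveringcurves}.
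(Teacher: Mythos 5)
Your proposal is correct and follows essentially the same route as the paper: the paper also takes $C_P$ to be (the strict transform of) the conic $\{P\}\times X_{f(P)}$ in the fiber of $\pi_1\circ\alpha$ over a general $P$, notes $(H[2]-2E).C_P=0$ via exactly the computation $H[2].C_P=2$ and $E.C_P=1$, and concludes by Lemma~\ref{lemm:coveringcurves}. Your write-up just makes explicit the transversality with the diagonal and the fact that a general fiber is a smooth irreducible conic, which the paper leaves implicit.
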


\begin{proof}
Let $W = X \times X$ and $\alpha : W' \rightarrow W$ be the blow up of the diagonal. We denote its exceptional divisor by $E$. Let $C_P$ be a conic in the fiber at $P\in X_1$ passing through $P$. Then we have
\[
(H[2] - 2E).C_P = 0.
\]
As $P$ varies over $X_1$ $C_P$ forms a subvariety $D$ in $W'$ which is dominant to both $X_1$ and $X_2$. Thus our assertion follows from Lemma~\ref{lemm:coveringcurves}.

\end{proof}

Proposition~\ref{prop:cubic} shows that in general, $\delta(X, H)$ may not be $1/2$. 

\subsection{Local Tamagawa measures of conics in families}

Here we study the behavior of local Tamagawa measures of conics in a family.
Let $f: X \rightarrow S$ be a conic bundle defined over a number field $k$. Let $S^\circ$ be the complement of the discriminant locus $\Delta_f$ of $f$. 

Let $v$ be a place of $k$. We fix a $v$-adic metrization on $\mathcal O(K_X)$ and $\mathcal O(K_S)$. 
This induces a $v$-adic metrization on $\mathcal O(K_{X/S})$.
For each local $1$-form $ \mathrm dt \in \Omega^1_{X/S}$, one can define the local Tamagawa measure $\tau_{X_s, v}$ on a  conic $X_s$ for any $s \in S^\circ(k_v)$ by 
\[
\tau_{X_s, v}(U) = \int_{U} \frac{|\mathrm dt|}{\|\mathrm dt\|_v}
\]
which is independent of a choice of $\mathrm dt$.
\begin{lemm}
\label{lemm:smallballinfamily}
Suppose that $f : X \rightarrow S$ admits a rational section.
Then there exists $C>0$ such that for sufficiently small $T$, any $s \in S^\circ(k_v)$, $P\in X_s(k_v)$, we have
\[
C\mathrm{dist}_v(\Delta_f, f(P))T < \tau_{X_s, v}(\{Q \in X_s(k_v)\mid \mathrm{dist}_v(P, Q)<T\} ).
\]

\end{lemm}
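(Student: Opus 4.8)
The plan is to reduce the statement to a fiberwise version of Lemma~\ref{lemm: smallball}, but with a bound that degenerates near the discriminant locus in a controlled way, namely linearly in $\mathrm{dist}_v(\Delta_f, f(P))$. First I would observe that, because $f$ is flat and proper and $X_s$ is a conic, the locus $X^\circ = f^{-1}(S^\circ)$ is smooth over $S^\circ$ of relative dimension $1$. Working $v$-adically, I would cover $X^\circ(k_v)$ by finitely many charts on which a local section $\mathrm dt$ of $\Omega^1_{X/S}$ trivializes the relative cotangent sheaf; on such a chart the coordinate $t$ realizes each fiber $X_s(k_v)$ locally isometrically (up to bounded constants) with a disk in $k_v$, and the measure $\tau_{X_s,v}$ is $|\mathrm dt|/\|\mathrm dt\|_v$ by definition. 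By compactness of $X(k_v)$ one gets uniform upper bounds on $\|\mathrm dt\|_v$ away from the discriminant, and the only thing that can go wrong is that the chart ``shrinks'' as $s$ approaches $\Delta_f$: a fixed $v$-adic neighborhood in $X$ may fail to contain a whole ball of radius $T$ inside a single fiber once the fiber degenerates. This is exactly what the factor $\mathrm{dist}_v(\Delta_f, f(P))$ is designed to absorb.

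Next, to make this quantitative I would use the rational section $\sigma : S \dashrightarrow X$. Shrinking $S^\circ$ if necessary (this is harmless, as it only removes a lower-dimensional set and we may enlarge $\Delta_f$), assume $\sigma$ is a morphism on $S^\circ$. Then $\sigma$ realizes each smooth conic $X_s$ as $\mathbb P^1$ with a marked rational point, and one can choose a relative coordinate adapted to $\sigma$. The key estimate is a lower bound, uniform over $s \in S^\circ(k_v)$ and $P \in X_s(k_v)$, of the form: the $v$-adic ball of radius $T$ around $P$ in $X_s(k_v)$ has $\tau_{X_s,v}$-measure at least $C \cdot \mathrm{dist}_v(\Delta_f, f(P)) \cdot T$ for $T$ small. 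I would get this by writing, on a suitable chart, the fiber inclusion $X_s \hookrightarrow X$ and comparing the intrinsic distance on $X_s(k_v)$ with the ambient distance on $X(k_v)$: there is a constant $C_1$ with $\mathrm{dist}_v^{X_s}(P,Q) \leq C_1 \mathrm{dist}_v^X(P,Q)$ always, but the reverse inequality $\mathrm{dist}_v^X(P,Q) \leq C_2 \mathrm{dist}_v^{X_s}(P,Q)$ only holds with $C_2$ blowing up like $\mathrm{dist}_v(\Delta_f, f(P))^{-1}$, because the fiber direction becomes ``tangent'' to nearby fibers as $X_s$ degenerates. Equivalently, the radius of the largest fiberwise ball around $P$ that still embeds in a fixed ambient chart shrinks proportionally to $\mathrm{dist}_v(\Delta_f, f(P))$. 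Feeding that into the change-of-variables computation, exactly as in the proof of Lemma~\ref{lemm: smallball}, gives the claimed bound: one integrates $\|\mathrm dt\|_v^{-1}\,|\mathrm dt|$ over a coordinate polydisk of radius comparable to $\mathrm{dist}_v(\Delta_f, f(P)) \cdot T$, and the integrand is bounded below by a uniform positive constant.

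The main obstacle, and the step I would spend the most care on, is the uniformity of the comparison constant $C_2$ and its precise dependence on $\mathrm{dist}_v(\Delta_f, f(P))$ — i.e., showing the blow-up is exactly linear (no worse power), uniformly in both $s$ and $P$. Here I would use the explicit structure of the degeneration: in local equations a conic bundle near a point of $\Delta_f$ looks like $xy = g(s)$ (or $x^2 = g(s)$), where $g$ is a local equation of $\Delta_f$ and $\mathrm{dist}_v(\Delta_f, f(P)) \asymp |g(f(P))|_v$; from this normal form one reads off directly that a fiber coordinate of size $\rho$ corresponds to an ambient displacement of size $\asymp |g(f(P))|_v^{-1}\rho$ (because the two rulings of the smooth conic are separated by a ``gap'' of size $|g|_v$). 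The non-reduced case $x^2 = g(s)$ is handled similarly, with the section $\sigma$ used to pick out a branch. Assembling the finite chart cover, taking the minimum of the finitely many constants, and noting that points $P$ with $f(P) \notin S^\circ$ are excluded by hypothesis, yields the lemma.
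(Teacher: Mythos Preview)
Your approach is essentially the paper's: use the section to put the conic in adapted coordinates, pass to a normal form (the paper uses $f(s)z^2+2xy=0$ in $\mathbb P^2$, which is your $xy=g(s)$ projectivized), compare the ambient distance to a fiber parameter and pick up exactly one factor of $|f(s)|_v^{-1}\asymp\mathrm{dist}_v(\Delta_f,s)^{-1}$, then finish as in Lemma~\ref{lemm: smallball}. The explicit rational parametrization the paper writes down, $(fu^2:-2v^2:-2uv)$ via lines through the section point, is precisely your normal-form computation made concrete.

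One technical point the paper handles more carefully than your sketch: you propose to shrink $S^\circ$ so that the rational section $\sigma$ becomes a morphism, and to cover $X^\circ(k_v)$ by finitely many charts. But $X^\circ(k_v)$ is not compact, so a finite cover with uniform constants is not automatic, and the whole content of the lemma is uniformity as $s\to\Delta_f$. The paper deals with this by twisting: it replaces $S_0$ by $S_m=S_0+mf^*A$ for $A$ ample and $m\gg0$, so that $f_*\mathcal O(S_m)$ is globally generated, and hence around \emph{every} $p\in S$ (including $p\in\Delta_f$) there is a regular section in $|S_m|$. This lets one cover the compact $X(k_v)$ by finitely many $v$-adic opens $B_l$, each sitting inside an affine chart $U_i\times A_j$ of the trivialized $\mathbb P(\mathcal E)$, and only then does the finite minimum of constants give the uniform $C$. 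With that adjustment your argument is the paper's; the mention of the $x^2=g(s)$ case is unnecessary here since the discriminant is assumed smooth (so the singular fibers are nodal, not double lines).
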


\begin{proof}
We fix a rational section $S_0$ and an ample divisor $A$ on $S$.
Let $S_m = S_0 + mf^*A$. Now $f_*(\mathcal O(S_0)) \otimes \mathcal O(mA)$ is globally generated for $m \gg 0$. Using this for each point $p\in S$, one may find a rational section $S_p \sim S_m$ such that $S_p$ is a local section in a neighborhood of the point $p \in S$.
By the definition of conic bundles one can embed $f: X \rightarrow S$ into a projective bundle $\mathbb P(\mathcal E)$. Take a finite open affine covering $\{U_i\}$ of $S$ so that over $U_i$, $\mathbb P(\mathcal E)|_{U_i}$ is trivialized, i.e., isomorphic to $U_i \times \mathbb P^2$.
Taking a finer finite open covering, we may assume that $f$ admits a local section $S_i$ over $U_i$. By taking a finer finite open covering and applying a change of coordinates, one can assume that the local section $S_i$ corresponds to $(1:0:0)$ in $\mathbb P^2$. Moreover we may assume that the tangent line of $X_s$ at $(1:0:0)$ is given by $x_1 = 0$. Let $A_j \, (j = 0,1,2)$ be the standard affine charts of $\mathbb P^2$ and we define $V_{i, j} = f^{-1}(U_i) \cap (U_i \times A_j)$ which is affine. 

Now we take a finite $v$-adic open covering $B_l$ of $X(k_v)$ such that $\overline{B}_l$ is contained in some $V_{i, j}(k_v)$. Then on $B_l$ there exists a positive constant $C_1$ such that for any $(s, P), (s, Q)\in B_l \subset U_i(k_v) \times A_j(k_v)$ such that
\begin{align}
\label{bounded}
\mathrm{dist}_v(P, Q) \leq C_1\max\{|x_j(P) - x_j(Q)|_v, |y_j(P) - y_j(Q)|_v\},
\end{align}
where $x_j, y_j$ is the coordinates of $A_j$.

Now we are going to parametrize conics in the family.
By our construction, $f^{-1}(U_i) \subset U_i \times \mathbb P^2$ is defined by the following equation:
\[
d(s)y^2 + f(s)z^2 + 2xy + 2e(s)yz = 0,
\]
where $d, f, e$ are functions on $U_i$. Note that the discriminant locus $\Delta_f$ is defined by $f = 0$ and it is a smooth divisor by our assumption. After further simplifications, we may assume that the equation is given by
\[
f(s)z^2 + 2xy = 0.
\]

Lines $uy - vz = 0$ passing through $(1:0:0)$ are parametrized by $(u:v) \in \mathbb P^1$. Then the rational parameterization of conics is given by
\[
( fu^2 :-2v^2 : -2uv).
\]
In particular, any smooth conic $X_s$ over $s\in U_i(k_v)$ is covered by $V_{i, 0}$ and $V_{i, 1}$. Also note that while this rational parametrization is not valid along singular fibers, a rational map mapping $(s, P) \in f^{-1}(U_i)$ to $(u(P):v(P)) \in \mathbb P^1$ is a well-defined morphism.

Suppose that $B_l$ is contained in $V_{i, 0}$. The inequality~(\ref{bounded}) shows that there exists $C_2 > 0$ such that any $(s, P), (s, Q)\in B_l \subset U_i(k_v) \times A_j(k_v)$
\[
\mathrm{dist}_v(P, Q) < C_2 \mathrm{dist}_v(\Delta_f, f(P))^{-1}|t(P)-t(Q)|_v
\]
where $t = v/u$ and $ \mathrm{dist}_v(\Delta_f, f(P))$ is the distant function of $\Delta_f$.

Suppose that $B_l$ is contained in $V_{i, 1}$. The inequality~(\ref{bounded}) shows that there exists $C_3 > 0$ such that any $(s, P), (s, Q)\in B_l \subset U_i(k_v) \times A_j(k_v)$
\[
\mathrm{dist}_v(P, Q) < C_3 |t(P)-t(Q)|_v
\]
where $t = u/v$.

Suppose that $B_l$ is contained in $V_{i, 2}$. The inequality~(\ref{bounded}) shows that there exists $C_4 > 0$ such that any $(s, P), (s, Q)\in B_l \subset U_i(k_v) \times A_j(k_v)$
\[
\mathrm{dist}_v(P, Q) < C_4|t(P)-t(Q)|_v
\]
where $t = v/u$. Now by arguing as in Lemma~\ref{lemm: smallball} our assertion follows.

\end{proof}

\begin{lemm}
\label{lemm:tamagawainfamily}
Let $f: X \rightarrow S$ be a conic bundle defined over a number field $k$ with a rational section.
Let $v$ be an archimedean place of $k$. We fix a $v$-adic metrization on $\mathcal O(K_X)$ and $\mathcal O(K_S)$. 
Then for any sufficiently small $\epsilon > 0$ there exists a constant $C_\epsilon > 0$ such that for any $s \in S^\circ(k_v)$ we have
\[
\tau_v(X_s) <C_\epsilon\mathrm{dist}_v(\Delta_f, s)^{1-\epsilon}
\]

\end{lemm}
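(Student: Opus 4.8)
The plan is to bound the local Tamagawa number $\tau_v(X_s)$ of a smooth conic $X_s$ by integrating the gauge form coming from a choice of relative $1$-form, and to track how this form degenerates as $s$ approaches the discriminant locus $\Delta_f$. The key observation is that for an archimedean place the fiber $X_s(k_v)$ is compact, so $\tau_v(X_s)$ is a convergent integral, and we only need a uniform upper bound as $s$ varies in $S^\circ(k_v)$. We will use the explicit local model for the conic bundle established in the proof of Lemma~\ref{lemm:smallballinfamily}: over an affine chart $U_i \subset S$ the total space $f^{-1}(U_i) \subset U_i \times \bP^2$ is cut out (after simplifications) by
\[
f(s) z^2 + 2xy = 0,
\]
where $f = 0$ defines the smooth divisor $\Delta_f$, and the conics are rationally parametrized by $(u:v) \mapsto (f u^2 : -2 v^2 : -2 u v)$.

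First I would fix a finite $v$-adic open cover $\{B_l\}$ of $X(k_v)$ refining the charts $V_{i,j}$, exactly as in Lemma~\ref{lemm:smallballinfamily}, and on each piece use the rational coordinate $t$ (equal to $v/u$ or $u/v$ depending on which chart) to parametrize the fiber. The relative form $\mathrm{d}t$ pulls back the gauge measure $\tau_{X_s, v}$, and the point is to compare the $v$-adic norm $\|\mathrm{d}t\|_v$ induced by the chosen metrizations of $\cO(K_X)$ and $\cO(K_S)$ with the actual coordinate measure $|\mathrm{d}t|$. Differentiating the rational parametrization $(x:y:z) = (f u^2 : -2v^2 : -2uv)$ and passing to the affine coordinate $t$, one finds that $\mathrm{d}t$, written in terms of a local trivialization of $\Omega^1_{X/S}$, acquires a factor that is a power of $f(s)$ — concretely, on the chart where $t = v/u$ the parametrization of $X_s$ degenerates at the rate governed by $f(s)$ (this is precisely the source of the $\mathrm{dist}_v(\Delta_f, f(P))^{-1}$ factor in the bound of Lemma~\ref{lemm:smallballinfamily}). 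Consequently, the density $\|\mathrm{d}t\|_v^{-1}$ is bounded above, uniformly in $s$, by a constant times a negative power of $|f(s)|_v = \mathrm{dist}_v(\Delta_f, s)$ times something; but on the complementary chart where $t = u/v$ the density picks up a compensating positive power of $|f(s)|_v$, and the honest statement is that after integrating over the part of $X_s(k_v)$ where the respective chart is valid, the total contribution is $O(\mathrm{dist}_v(\Delta_f, s)^{1-\epsilon})$.

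**The main obstacle** is the bookkeeping near the two ``poles'' $(1:0:0)$ and $(0:1:0)$ of the rational parametrization: the coordinate $t$ blows up there, so naively $\int |\mathrm{d}t|$ over the real (or $p$-adic) points of $X_s$ diverges, and one must carefully split $X_s(k_v)$ into the region where $|t|_v$ is bounded and the region where $1/|t|_v$ is bounded, estimate each with the appropriate chart, and show the two estimates glue. The exponent $1-\epsilon$ (rather than $1$) enters because near $\Delta_f$ the conic $X_s$ is developing a node, so a shrinking portion of $X_s(k_v)$ carries most of the mass and a clean power bound is only available after sacrificing an $\epsilon$; the precise rate $f(s)^{1-\epsilon}$ should come out of estimating $\int \frac{|\mathrm{d}t|}{(|f(s)|_v + |t|_v^2)^{1/2}}$-type integrals (coming from $z = -2uv$, $x = f u^2$ in the defining equation) over a compact $v$-adic domain, which is elementary but is where all the work lies. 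Once this integral estimate is in place, summing over the finite cover $\{B_l\}$ and taking $C_\epsilon$ to be the largest resulting constant completes the proof.

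\begin{proof}[Proof sketch]
We use the local model from the proof of Lemma~\ref{lemm:smallballinfamily}: cover $S$ by finitely many affine charts $U_i$ over which $X$ is cut out in $U_i \times \bP^2$ by $f(s)z^2 + 2xy = 0$ with $\Delta_f = \{f = 0\}$ smooth, and cover $X(k_v)$ by finitely many $v$-adic opens $B_l$, each contained in some $V_{i,j}$. On each $B_l$ we use the rational coordinate $t$ parametrizing the conic (with $(x:y:z) = (fu^2 : -2v^2 : -2uv)$, $t = v/u$ or $u/v$), and we compare the gauge density $\|\mathrm{d}t\|_v^{-1}$ induced by the fixed metrizations of $\cO(K_X)$ and $\cO(K_S)$ with the coordinate measure $|\mathrm{d}t|$. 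Differentiating the parametrization shows this density is bounded, uniformly in $s \in S^\circ(k_v)$, by a fixed constant times a power of $|f(s)|_v = \mathrm{dist}_v(\Delta_f, s)$. Splitting $X_s(k_v)$ into the region where $|t|_v$ is bounded (estimated in the chart $t = v/u$) and the region where $1/|t|_v$ is bounded (estimated in the chart $t = u/v$), an elementary estimate of the resulting one-dimensional $v$-adic integrals of the form $\int \frac{|\mathrm{d}t|}{(|f(s)|_v + |t|_v^2)^{1/2}}$ over compact domains yields, for each sufficiently small $\epsilon > 0$, a bound $O(\mathrm{dist}_v(\Delta_f, s)^{1-\epsilon})$ on the integral over that $B_l$. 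Summing over the finite cover and taking $C_\epsilon$ to be the maximum of the resulting constants gives the claim.
\end{proof}
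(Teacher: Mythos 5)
Your plan follows the same route the paper intends (the paper's proof is literally a pointer to the local model of Lemma~\ref{lemm:smallballinfamily} plus ``explicit computation''), but the estimate you assert at the end is false, and this is a genuine gap rather than a missing detail. The integral you reduce to, $\int_{|t|_v\le 1}\frac{|\mathrm{d}t|}{(|f(s)|_v+|t|_v^2)^{1/2}}$, is of size $\log(1/|f(s)|_v)$ (over $\mathbb{R}$ it equals $2\log\frac{1+\sqrt{1+|f(s)|_v}}{\sqrt{|f(s)|_v}}$), which tends to $+\infty$ as $s\to\Delta_f$; it is $O_\epsilon(\mathrm{dist}_v(\Delta_f,s)^{-\epsilon})$ but certainly not $O(\mathrm{dist}_v(\Delta_f,s)^{1-\epsilon})$. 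So the ``elementary estimate'' you invoke does not yield the claimed decay, and no choice of small $\epsilon>0$ repairs the exponent.

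The divergence is genuinely present in the geometry, not an artifact of your chart choice. Near the node of a degenerate fiber the total space is locally $\{xy=g\}$ with $g$ a local equation of $\Delta_f$; the local generator of $\cO(K_{X/S})$ is $(\mathrm{d}x\wedge\mathrm{d}y)\otimes(f^*\mathrm{d}g)^{-1}$, and by adjunction its restriction to the fiber $\{xy=c\}$ is the form $\mathrm{d}x/x$. Since the fixed metrization gives this generator norm $\asymp 1$, one has $\|\mathrm{d}x\|_v\asymp|x|_v$, hence $\tau_{X_s,v}\asymp|\mathrm{d}x|/|x|_v$ near the node, and integrating over $|c|/\delta\le|x|_v\le\delta$ contributes $\asymp\log(1/|c|_v)$ whenever the two branches through the node are defined over $k_v$ (always the case for $v$ complex). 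Thus $\tau_v(X_s)$ grows like $\log(1/\mathrm{dist}_v(\Delta_f,s))$ for such $s$, and the inequality in the statement cannot hold as written; the correct output of your computation is $\tau_v(X_s)\ll_\epsilon\mathrm{dist}_v(\Delta_f,s)^{-\epsilon}$. You should record it in that weaker form and check that it still suffices downstream: the same norm computation $\|\mathrm{d}x\|_v\asymp|x|_v\le 1$ shows that the lower bound of Lemma~\ref{lemm:smallballinfamily} in fact holds without the factor $\mathrm{dist}_v(\Delta_f,f(P))$, and these two corrections cancel in the displayed chain of inequalities in the proof of Theorem~\ref{theo:main}.
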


\begin{proof}
This follows from the descriptions in the proof of Lemma~\ref{lemm:smallballinfamily} and an explicit computations of local Tamagawa numbers using the naive metrization.
\end{proof}

\subsection{Fano conic bundles: the anticanonical height}

In this section, we discuss upper bounds of Manin type for the anticanonical height of Fano conic bundles. 
Here is a theorem:

\begin{theo}
\label{theo:main}

Let $f: X \rightarrow S$ be a conic bundle defined over a number field $k$ with a rational section.
We assume that $X$ and $S$ are Fano. 
Let $W = X \times X$ and $W'$ be the blow up of $W$ along the diagonal with the exceptional divisor $E$. We denote each projection $W' \rightarrow X_i$ by $\pi_i$.
Let $\alpha, \beta$ be positive real numbers such that $2\alpha- 2\beta =1 $. We further make the following assumptions:
\begin{enumerate}
\item Weak Manin's conjecture for $(S, -K_S)$ holds,
\item for any component $V$ of the stable locus of 
\[
|-\alpha K_{X/S}[2] - \beta f^*K_S[2] -E|
\]
such that $V$ is not contained in $E$, one of projections $\pi_i|_V$ is not dominant.
\end{enumerate}
Then there exists a non-empty Zariski open subset $U \subset X$ such that for any $\epsilon > 0$ there exists $C = C_\epsilon >0$ such that
\[
N(U, -K_X, T) < CT^{2\alpha +  \epsilon}.
\]
\end{theo}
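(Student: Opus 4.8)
The plan is to mimic the proof of Theorem~\ref{theo:general} (the general estimate via the Repulsion principle), but work fiberwise over $S$ so as to exploit the conic bundle structure and the assumed weak Manin's conjecture on the base. The key new input is the pair of assumptions: assumption (2) will feed into a \emph{relative} Repulsion principle, while assumption (1) lets us sum the fiberwise contributions. Throughout we may assume $X$ and $S$ smooth; note that under the numerical relation $2\alpha-2\beta=1$ we have $-\alpha K_{X/S}-\beta f^*K_S = -\alpha K_X + (\alpha-\beta)f^*K_S = -\alpha K_X - \tfrac12 f^*K_S$, so the divisor $A := -\alpha K_{X/S}[2]-\beta f^*K_S[2] - E$ whose stable base locus appears in (2) is exactly $\alpha(-K_X)[2] + (\alpha-\beta)f^*(-K_S)[2] - E$, a positive combination of pullbacks minus $E$; this is the divisor whose positivity governs a distance estimate of the form $\mathrm{dist}_v(P,Q)^{-1} \leq C\, (H_{-K_X}(P)H_{-K_X}(Q))^{\alpha}(H_{-K_S}(f(P))H_{-K_S}(f(Q)))^{\alpha-\beta}$ off a suitable exceptional set $U$, by the same height-inequality argument as in the proof of Theorem~\ref{theo:repulsion} applied to $A$ (with $\epsilon$ absorbed, since we may allow an extra $\epsilon$ at the end).

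\textbf{Key steps.} First I would set up the geometry: $-K_{X/S}$ restricts to a conic on each fiber, so $-K_{X/S}.X_s = 2$ and $-K_X.X_s = 2$ as well (since $f^*K_S.X_s = 0$). Second, apply the Repulsion-principle argument to the divisor $A$ above: using assumption (2), there is a nonempty open $U\subset X$ and $C>0$ so that for $P\neq Q$ in $U(k)$ lying in the same fiber $X_s$,
\[
\mathrm{dist}_v(P,Q) > C\, \bigl(H_{-K_X}(P)H_{-K_X}(Q)\bigr)^{-\alpha}\,\bigl(H_{-K_S}(s)\bigr)^{-2(\alpha-\beta)} \cdot (\text{harmless factors}),
\]
where I absorb the $f^*(-K_S)$ contribution using $f(P)=f(Q)=s$; one should shrink $U$ to avoid the discriminant locus and the images of the non-dominant components from (2). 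Third, do the fiberwise count: fix an archimedean place $v$; for a fiber $X_s$ with $s\in S^\circ(k_v)$, pack $v$-adic balls around the points $P\in U\cap X_s$ with $H_{-K_X}(P)\leq T$ inside $X_s(k_v)$, using Lemma~\ref{lemm:smallballinfamily} for the lower bound on ball volumes ($\gg \mathrm{dist}_v(\Delta_f, s)\cdot r$ for radius $r$) and Lemma~\ref{lemm:tamagawainfamily} for the upper bound $\tau_v(X_s) \ll \mathrm{dist}_v(\Delta_f,s)^{1-\epsilon}$ on the total volume. The two occurrences of $\mathrm{dist}_v(\Delta_f,s)$ combine to give (up to $\epsilon$ in the exponent) a bound on the number of such points in $X_s$ of the shape $\ll T^{2\alpha}\,H_{-K_S}(s)^{2(\alpha-\beta)+\epsilon'} = T^{2\alpha}H_{-K_S}(s)^{1+\epsilon'}$, since $2(\alpha-\beta)=1$. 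Fourth, sum over $s$: the points $P\in U(k)$ with $H_{-K_X}(P)\leq T$ map to $S$-rational points $s=f(P)$ with $H_{-K_S}(s) \ll T^{O(1)}$ (heights are comparable under $f^*$ up to a power), and more precisely one decomposes the count over dyadic ranges $H_{-K_S}(s)\in [2^j, 2^{j+1})$; in range $j$ there are $O(2^{j(a(S,-K_S)+\epsilon)}) = O(2^{j(1+\epsilon)})$ such $s$ by assumption (1) (recall $a(S,-K_S)=1$ for Fano $S$), each contributing $\ll T^{2\alpha} 2^{j(1+\epsilon')}$, but one must be careful: naively this diverges, so the correct bookkeeping is to also use the constraint that $H_{-K_X}(P)\leq T$ forces the fiber to be "not too positive", i.e. $H_{-K_S}(s)\ll T^{1/\alpha \cdot (\text{something})}$, making the sum over $j$ finite with the dominant term $T^{2\alpha+\epsilon}$.

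\textbf{Main obstacle.} The delicate point — and the one I would spend the most care on — is the fiberwise-to-global summation: balancing the $T^{2\alpha}$ from the in-fiber repulsion count against the $H_{-K_S}(s)^{1+\epsilon}$ growth in the number of fibers, and showing the sum over all relevant $s$ telescopes to $O_\epsilon(T^{2\alpha+\epsilon})$ rather than something larger. This requires precisely tracking how $H_{-K_X}(P)\leq T$ bounds $H_{-K_S}(f(P))$ from above (via $-K_X - \frac{1}{?}f^*(-K_S)$ being effective or not, i.e. comparing the anticanonical heights through $f$), so that the geometric-series sum $\sum_j 2^{j(1+\epsilon)}\cdot(\text{per-fiber bound})$ is dominated by its largest term and that term is $T^{2\alpha+\epsilon}$. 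A secondary technical nuisance is handling the bad fibers (over the discriminant $\Delta_f$ and over $S\setminus S^\circ$): these must either be absorbed into the exceptional set $U$ or counted separately via a trivial bound (e.g. each singular conic over a point of $S$ contributes a bounded-degree curve's worth of points, controlled by dimension growth on $\mathbb{P}^1$), and one checks this contributes $O(T^{2\alpha+\epsilon})$ as well; choosing $U$ correctly so that the dominant components forced to be non-dominant by assumption (2) are removed is exactly what makes the repulsion estimate uniform over the remaining fibers.
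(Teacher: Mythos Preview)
Your overall architecture is right --- relative Repulsion via assumption (2), fiberwise ball-packing with Lemmas~\ref{lemm:smallballinfamily} and~\ref{lemm:tamagawainfamily}, then sum over $s$ using assumption (1) --- and this is exactly what the paper does. But there is a sign error that breaks the argument at the very step you yourself flagged as the ``main obstacle''.

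You write $-\alpha K_{X/S}-\beta f^*K_S = -\alpha K_X + (\alpha-\beta)f^*K_S$, which is correct, and then immediately say the divisor $A$ equals $\alpha(-K_X)[2] + (\alpha-\beta)f^*(-K_S)[2] - E$, which is not: the $f^*(-K_S)$ term carries a \emph{minus} sign, $A = \alpha(-K_X)[2] - \tfrac12 f^*(-K_S)[2] - E$. Running the height inequality for $A$ correctly therefore gives, for $P\neq Q$ with $f(P)=f(Q)=s$,
\[
\mathrm{dist}_v(P,Q) \;>\; C\,(H_{-K_X}(P)H_{-K_X}(Q))^{-\alpha}\,H_{-K_S}(s)^{+1},
\]
not $H_{-K_S}(s)^{-1}$ as you have. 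Consequently the ball radius is $\sim T^{-2\alpha}H_{-K_S}(s)$ and the per-fiber count is $\ll T^{2\alpha}H_{-K_S}(s)^{-1}$ (up to the $\Delta_f$ factors, which only cost a $T^{m\epsilon}$). The sum over $s$ is then $\sum_s H_{-K_S}(s)^{-(1+\epsilon)}$, which converges by weak Manin for $(S,-K_S)$ and a Tauberian argument. This is how the paper closes the loop.

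With your sign, the per-fiber bound is $\ll T^{2\alpha}H_{-K_S}(s)^{+1}$ and the sum $\sum_s H_{-K_S}(s)$ genuinely diverges; your proposed rescue via ``$H_{-K_X}(P)\leq T$ bounds $H_{-K_S}(s)$'' only truncates the sum at $H_{-K_S}(s)\ll T^{1/c}$ for some $c>0$, and the dominant dyadic term is then of order $T^{2\alpha + (2+\epsilon)/c}$, strictly worse than $T^{2\alpha+\epsilon}$. So the bookkeeping cannot be saved without correcting the sign. Once you fix that one sign, your outline matches the paper's proof essentially verbatim (the paper phrases the Repulsion inequality in terms of $H_{-K_{X/S}}$ and $H_{-K_S}$ separately, which makes the sign transparent).
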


\begin{proof}
First of all note that the assumption (2) implies that $-2\alpha K_X + f^*K_S$ is big.
Indeed, it implies that $-\alpha K_{X/S}- \beta f^*K_S$ is big; otherwise the linear system of this divisor defines a non-trivial fibration up to a birational modification and the assumption (2) cannot be true.
Then note that we have $-K_{X/S} = -K_X + f^*K_S$ and $2\alpha - 2\beta = 1$ so that $-\alpha K_{X/S}- \beta f^*K_S = -\alpha K_X + \frac{1}{2}f^*K_S$ so our claim follows.

Let $v$ be a place of $k$ and fix $v$-adic metrizations on $\mathcal O(K_X)$ and $\mathcal O(K_S)$. Fix $\epsilon > 0$. Arguing as Theorem~\ref{theo:repulsion}, the assumption (2) implies that there exists $U  \subset X$ and $C$ such that for any $P, Q \in U(k)$ with $P \neq Q$  and $f(P) = f(Q) = s$ we have
\begin{align}
\label{eqn}
\mathrm{dist}_v(P,Q)  > C (H_{-K_{S}}(s))^{-2\beta}(H_{-K_{X/S}}(P)H_{-K_{X/S}}(Q))^{-\alpha}.
\end{align}
We define
\[
A_T = \{P \in U(k) \mid H_L(P) \leq T\}.
\]
For $P \in A_T\cap X_s$, we define the $v$-adic ball by
\[
B_T(P) = \{R \in X_s\cap U(k_v) \mid \mathrm{dist}_v(P, R)< \frac{1}{2}CT^{-2\alpha}H_{-K_{S}}(s)\}
\]
Then $\cup B_T(P)$ is disjoint because of (\ref{eqn}) and the triangle inequality. Note that after shrinking $U$, $T^{-2\alpha}H_{-K_{S}}(s)$ uniformly goes to $0$ as $T \rightarrow \infty$ for any $s \in S$ with $A_T\cap X_s\neq \emptyset$ because of the Northcott property of the height function associated to $-2\alpha K_X + f^*K_S$ which is big. Thus we have
\begin{align*}
\mathrm{dist}_v(\Delta_f, s)^{1-\epsilon }&\gg_\epsilon \tau_v(X_s) > \sum_{P \in A_T\cap X_s(k)} \tau_{X_s, v}(B_T(P) )\\ &\gg_\epsilon N(U\cap X_s, L, T)\mathrm{dist}_v(\Delta_f, s)T^{-2\alpha}H_{-K_{S}}(s)
\end{align*}
by Lemma~\ref{lemm:smallballinfamily} and Lemma~\ref{lemm:tamagawainfamily}. Let $m$ be a positive integer such that $mL - f^*\Delta_f$ is ample. We conclude that 
\[
N(U\cap X_s, L, T) \ll_\epsilon  T^{2\alpha}H_{-K_{S}}(s)^{-1}H_{\Delta_f, v}(s)^\epsilon \ll  T^{2\alpha + m\epsilon} H_{-K_{S}}(s)^{-1}.
\]
Since $-kK_X \geq -f^*K_S$ for some $k$, our assertion follows from the fact that $S$ satisfies Weak Manin's conjecture and Tauberian theorem.
Indeed, Weak Manin's conjecture for $S$ and Tauberian argument implies that
\[
\sum_{s \in S^\circ(k)} H_{-K_{S}}(s)^{-(1 + \epsilon)} < +\infty
\]
where $S^\circ \subset S$ is a some open subset of $S$. Let $U^\circ  = U \cap f^{-1}(S^\circ)$. Then one can conclude that
\[
N(U^\circ, -K_X, T) = \sum_{s \in S^\circ(k)}N(U\cap X_s, L, T) \leq T^{2\alpha + m\epsilon + \epsilon k} \sum_{s \in S^\circ(k)}H_{-K_{S}}(s)^{-1-\epsilon} \ll T^{2\alpha + m\epsilon + \epsilon k}.
\]
Thus our assertion follows.


\end{proof}

\subsection{Fano conic bundles: the non-anticanonical heights}

In this section, we discuss weak Manin's conjecture for non-anticanonical height functions in some cases:

\begin{theo}
\label{theo:main2}

Let $f: X \rightarrow S$ be a conic bundle defined over a number field $k$ with a rational section.
We assume that $X$ and $S$ are Fano. Let $L = -K_X -tf^*K_S$. 
We make the following assumptions:
\begin{enumerate}
\item Weak Manin's conjecture for $(S, -K_S)$ holds,
\item $t \geq \frac{1}{2\delta(X, -K_X)}$.
\end{enumerate}
Then for any $\epsilon > 0$ there exists a non-empty Zariski open subset $U = U(\epsilon) \subset X$ and  $C = C_\epsilon >0$ such that
\[
N(U, L, T) < CT^{2\delta(X, -K_X) +  \epsilon}.
\]
In particular when $\delta(X, -K_X) = 1/2$, Conjecture~\ref{conj:weakManin} holds for $(X, L)$ except independence of $U$ on $\epsilon$. 
\end{theo}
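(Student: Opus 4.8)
The plan is to follow the structure of the proof of Theorem~\ref{theo:main}, but replacing the anticanonical height by the height $H_L$ for $L = -K_X - tf^*K_S$ and using the $\delta$-invariant of $(X, -K_X)$ in place of the pair of exponents $(\alpha,\beta)$. First I would set $v$ to be an archimedean place, fix $v$-adic metrizations on $\mathcal O(K_X)$ and $\mathcal O(K_S)$, and observe that by Lemma~\ref{lemm:generalconicbundle} we have $\delta(X, -K_X) \geq 1/2$, so that assumption (2) makes sense and $2t\delta(X,-K_X) \geq 1$. The key input is the Repulsion principle (Theorem~\ref{theo:repulsion}) applied to the big divisor $-K_X$ on $X$: for any $\epsilon>0$ there is a non-empty open $U = U(\epsilon) \subset X$ and $C_\epsilon>0$ with
\[
\mathrm{dist}_v(P, Q) > C_\epsilon (H_{-K_X}(P)H_{-K_X}(Q))^{-(\delta(X,-K_X)+\epsilon)}
\]
for all distinct $P, Q \in U(k)$. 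Restricting to points in the same fibre $X_s$, and using that $H_{-K_X}$ restricted to $X_s$ is comparable to a fixed height on the conic $X_s$ up to a factor controlled by $H_{-K_S}(s)$ (since $-K_X|_{X_s}$ has degree $2$ and the metrization varies nicely over $S$), I would derive a fibrewise repulsion estimate of the same shape as (\ref{eqn}) in the proof of Theorem~\ref{theo:main}.

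Next I would run the ball-packing argument fibrewise. For $P \in A_T \cap X_s$ where $A_T = \{P \in U(k) \mid H_L(P) \leq T\}$, define $v$-adic balls $B_T(P) \subset X_s \cap U(k_v)$ of radius $\sim C T^{-2\delta(X,-K_X)} H_{-K_S}(s)^{?}$ — the precise power of $H_{-K_S}(s)$ is dictated by comparing $H_L$ and $H_{-K_X}$ on the fibre, where $L = -K_X - tf^*K_S$ differs from $-K_X$ by $-tf^*K_S$, constant on fibres. The disjointness of these balls follows from the fibrewise repulsion estimate and the triangle inequality. Then Lemma~\ref{lemm:smallballinfamily} gives a lower bound for $\tau_{X_s,v}(B_T(P))$ of the form $C\,\mathrm{dist}_v(\Delta_f, s)\cdot(\text{radius})$, and Lemma~\ref{lemm:tamagawainfamily} gives $\tau_v(X_s) \ll_\epsilon \mathrm{dist}_v(\Delta_f, s)^{1-\epsilon}$. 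Combining, and absorbing the factors $\mathrm{dist}_v(\Delta_f,s)^{-\epsilon} = H_{\Delta_f,v}(s)^{\epsilon}$ into a power of $H_{-K_S}(s)$ (using that $mL - f^*\Delta_f$ is ample for $m \gg 0$, which in turn needs $L$ big — this is where assumption (2) via $t \geq 1/(2\delta(X,-K_X))$ and bigness of $L$ enters), I would obtain a fibrewise bound
\[
N(U \cap X_s, L, T) \ll_\epsilon T^{2\delta(X,-K_X) + \epsilon} H_{-K_S}(s)^{-1-\epsilon'}
\]
for a suitable $\epsilon'>0$ depending on $\epsilon$.

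Finally I would sum over $s \in S^\circ(k)$, where $S^\circ \subset S$ is an open set on which weak Manin's conjecture for $(S, -K_S)$ together with a Tauberian argument gives $\sum_{s \in S^\circ(k)} H_{-K_S}(s)^{-1-\epsilon'} < +\infty$; taking $U^\circ = U \cap f^{-1}(S^\circ)$ then yields $N(U^\circ, L, T) \ll_\epsilon T^{2\delta(X,-K_X)+\epsilon}$, which is the claim (the last sentence about $\delta(X,-K_X)=1/2$ recovering Conjecture~\ref{conj:weakManin} is then immediate since $a(X,L)=2\delta(X,-K_X)=1$ would need to be checked, or rather the exponent $2\delta(X,-K_X)$ equals the predicted linear-growth exponent). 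The main obstacle I expect is making precise the comparison, uniformly in $s$, between the global height $H_L$ (equivalently $H_{-K_X}$) restricted to $X_s$ and an intrinsic height on the conic $X_s$, with the error controlled by an explicit power of $H_{-K_S}(s)$: this is the step where the factor structure in the ball radii is determined, and it requires care with the behaviour near the discriminant locus $\Delta_f$, exactly as in Lemmas~\ref{lemm:smallballinfamily} and~\ref{lemm:tamagawainfamily}. Everything else is a routine adaptation of the proof of Theorem~\ref{theo:main}, with the exponent $\alpha$ there replaced by $\delta(X,-K_X)$ and the role of $\beta$ absorbed into the factor $t f^*K_S$ defining $L$.
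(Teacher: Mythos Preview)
Your outline is essentially the paper's proof, but you have inserted an unnecessary obstacle that the paper avoids entirely. You do \emph{not} need to compare the restriction of $H_{-K_X}$ to $X_s$ with any intrinsic height on the conic, nor do you need to derive a fibrewise repulsion estimate of the shape~(\ref{eqn}). The paper simply uses the global repulsion bound from Theorem~\ref{theo:repulsion} for $-K_X$ as is, together with the elementary height identity (up to $O(1)$)
\[
H_L(P) = H_{-K_X}(P)\,H_{-K_S}(f(P))^t,
\]
which holds because $L = -K_X - t f^*K_S$ and $-f^*K_S$ is pulled back from $S$. If $P\in A_T\cap X_s$ then $H_{-K_X}(P)\le T\,H_{-K_S}(s)^{-t}$, so for two such points the global repulsion immediately yields
\[
\mathrm{dist}_v(P,Q) > C\,T^{-2(\delta(X,-K_X)+\epsilon)}\,H_{-K_S}(s)^{2t(\delta(X,-K_X)+\epsilon)}.
\]
This determines the ball radius with no further work; the packing against Lemmas~\ref{lemm:smallballinfamily} and~\ref{lemm:tamagawainfamily} then gives
\[
N(U\cap X_s, L, T) \ll_\epsilon T^{2(\delta(X,-K_X)+\epsilon)+m\epsilon}\,H_{-K_S}(s)^{-2t(\delta(X,-K_X)+\epsilon)}.
\]

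Two smaller corrections. First, assumption~(2) is not used to make $mL - f^*\Delta_f$ ample: $L$ is already ample because $-K_X$ is ample and $-tf^*K_S$ is nef, so this holds for any $t>0$. Assumption~(2) enters only at the very end, to guarantee that the exponent $2t(\delta(X,-K_X)+\epsilon)$ on $H_{-K_S}(s)^{-1}$ exceeds $1$, so that the Tauberian sum $\sum_s H_{-K_S}(s)^{-2t(\delta(X,-K_X)+\epsilon)}$ converges by weak Manin for $(S,-K_S)$. Second, for the final sentence with $\delta(X,-K_X)=1/2$: you should note $a(X,L)=1$, which follows since $K_X+L = -tf^*K_S$ is effective but not big, while restricting to a fibre shows $a(X,L)\ge a(X_s,L)=1$.
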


For such a height function, \cite{FL17} establishes Manin's conjecture when the base is the projective space using conic bundle structures. Our theorem is flexible in the sense that $S$ can be other Fano manifold other than the projective space. For example one may find a smooth Fano threefold with a conic bundle structure over the Hirzebruch surface $\mathbb F_1$ in Section~\ref{sec:Fanoconic}.

\begin{proof}
Let $v$ be a place and fix $v$-adic metrizations on $\mathcal O(K_X)$ and $\mathcal O(K_S)$. Fix $\epsilon > 0$. Theorem~\ref{theo:repulsion} implies that there exists $U  \subset X$ and $C$ such that for any $P, Q \in U(k)$ with $P \neq Q$ we have
\begin{align}
\label{eqn2}
\mathrm{dist}_v(P,Q)  > C(H_{-K_{X}}(P)H_{-K_{X}}(Q))^{-(\delta(X, -K_X) + \epsilon)}.
\end{align}
We define
\[
A_T = \{P \in U(k) \mid H_L(P) \leq T\}.
\]
For $P \in A_T\cap X_s$, we define the $v$-adic ball by
\[
B_T(P) = \{R \in X_s\cap U(k_v) \mid  \mathrm{dist}_v(P, R)< \frac{1}{2}CT^{-2(\delta(X, -K_X) + \epsilon)}(H_{-K_{S}}(s))^{2t(\delta(X, -K_X) + \epsilon)}\}
\]
Then $\cup B_T(P)$ is disjoint because of (\ref{eqn2}) and the triangle inequality. Note that  
$$T^{-2(\delta(X, -K_X) + \epsilon)}(H_{-K_{S}}(s))^{2t(\delta(X, -K_X) + \epsilon)}$$ uniformly goes to $0$ as $T \rightarrow \infty$ for any $s \in S$ with $A_T\cap X_s\neq \emptyset$ after shrinking $U$. Thus we have
\begin{align*}
\mathrm{dist}_v(\Delta_f, s)^{1-\epsilon }&\gg_\epsilon \tau_v(X_s) > \sum_{P \in A_T\cap X_s(k)} \tau_{X_s, v}(B_T(P) )\\ &\gg_\epsilon N(U\cap X_s, L, T)\mathrm{dist}_v(\Delta_f, s)T^{-2(\delta(X, -K_X) + \epsilon)}(H_{-K_{S}}(s))^{2t(\delta(X, -K_X) + \epsilon)}
\end{align*}
by Lemma~\ref{lemm:smallballinfamily} and Lemma~\ref{lemm:tamagawainfamily}. Let $m$ be a positive integer such that $mL - f^*\Delta_f$ is ample.  We conclude that 
\[
N(U\cap X_s, L, T)  \ll_\epsilon  T^{2(\delta(X, -K_X) + \epsilon) + m\epsilon} H_{-K_{S}}(s)^{-2t(\delta(X, -K_X) + \epsilon)}.
\]
Since $L \geq -tf^*K_S$, our assertion follows by arguing as Theorem~\ref{theo:main}.


\end{proof}

\begin{rema}
If $\delta(X, -K_X)$ is the minimum, then one can take $U$ to be independent of $\epsilon$.
\end{rema}

\section{$3$-dimensional Fano conic bundles}
\label{sec:Fanoconic}

In this section we list smooth $3$-dimensional Fano conic bundles and compute $\delta(X, -K_X)$ and the smallest $2\alpha$ satisfying the conditions of Theorem~\ref{theo:main}. Fano $3$-folds with Picard rank $\geq 2$ are classified by Mori-Mukai in \cite{MM81}, \cite{MM83}, and \cite{MM03}. We follow their classification.
We assume that our ground field is an algebraically closed field of characteristic $0$.
In our computations of $\delta(X, -K_X)$ and the minimum $2\alpha$ satisfying the conditions of Theorem~\ref{theo:main}, it is important to know a description of the nef cone of divisors of $X$. Such a description has been obtained in \cite{Mat95}. We freely use the results in this lecture note.

\subsection{Fano threefolds with Picard rank $2$}

According to \cite{MM81}, there are $36$ deformation types of smooth Fano $3$-folds with Picard rank $2$. Among them there are $16$ deformation types of smooth Fano $3$-folds which come with conic bundle structures. Since Fano $3$-folds have Picard rank $2$, these conic bundle structures are extremal contractions. Thus in these cases, a conic bundle structure comes with a rational section if and only if there is no singular fiber. Thus there are $7$ deformation types of smooth Fano $3$-folds which come with a conic bundle structure with a rational section. Here is the list of these Fano $3$-folds from Table 2 of \cite{MM81}:

\begin{center}
  \begin{tabular}{ c | c | l | c | c}
    \hline
   no & $(-K_X)^3$ & $X$ & $6\delta(X, -K_X)$ & $2\alpha$ \\ \hline
    $24$ & $30$ &  a divisor on $\mathbb P^2 \times \mathbb P^2$ of bidegree $(1, 2)$ &$ \leq 6$ &$ \leq 5$ \\ \hline
     $27$ & $38$ & the blow-up of $\mathbb P^3$ with center a twisted cubic &  $3$ &  $2$ \\ \hline
      $31$ & $46$ & the blow-up of $Q \subset \mathbb P^4$ with center a line on it  &$3$ & $5/3$\\ \hline
       $32$ & $48$ & a divisor on $\mathbb P^2 \times \mathbb P^2$ of bidegree $(1, 1)$  & $3$ & $5/2$ \\ \hline
        $34$ & $54$ &  $\mathbb P^1 \times \mathbb P^2$ & $3$& $5/3$ \\ \hline
         $35$ & 56 & $V_7 = \mathbb P(\mathcal O \oplus \mathcal O(1))$ over $\mathbb P^2$ & $3$ & $5/4$ \\ \hline
    $36$ & 62 & $\mathbb P(\mathcal O \oplus \mathcal O(2))$ over $\mathbb P^2$ &$3$ & $5/3$ \\
   
    \hline
  \end{tabular}
\end{center}
Here $Q \subset \mathbb P^4$ is a smooth quadric $3$-fold.
Note that no $34$, $35$, and $36$ are toric thus Manin's conjecture is known for these cases by \cite{BT-general} and \cite{BT-0}. \cite{BBS18} proves Manin's conjecture for an example of Fano $3$-folds of no $24$.

Let us illustrate the computation of $\delta(X, -K_X)$ and $\alpha > 0$ in some cases:

\begin{exam}[no $32$]
Let $W$ be a smooth divisor of $\mathbb P^2 \times \mathbb P^2$ of bidegree $(1, 1)$. We denote each projection by $\pi_i : W \rightarrow \mathbb P^2$ and let $H_i$ be the pullback of the hyperplane class via $\pi_i$. Then we have
\[
-K_X = 2H_1 + 2H_2.
\]
Since $H_1 + H_2$ is very ample, it follows that $\delta(X, -K_X) = 1/2$ by Lemma~\ref{lemm:generalconicbundle}.

Next we consider
\[
\alpha(2H_1 + 2H_2 - 3H_1) + \frac{2\alpha -1}{2} \cdot 3H_1 = \frac{4\alpha -3}{2}H_1 + 2\alpha H_2.
\]
Then when $\frac{4\alpha -3}{2} \geq 1$, i.e., $\alpha \geq 5/4$, the above divisor satisfies the assumptions of Theorem~\ref{theo:main}.
On the other hand, for each $P \in X$, let $C_P$ be a fiber of $\pi_2$ meeting with $P$. Then we have $C_P. H_1 = 1$ and $C_P .H_2 = 0$. Thus we have
\[
\left(\frac{4\alpha -3}{2}H_1 + 2\alpha H_2 \right). C_P = \frac{4\alpha -3}{2}.
\]
Thus by Lemma~\ref{lemm:coveringcurves}, we conclude that $\alpha = 5/4$ is the minimum value satisfying the assumptions of Theorem~\ref{theo:main}.

\end{exam}

\begin{exam}[no $31$]
\label{exam:no31}
 Let $X$ be the blow-up of $Q$ along a line. Then $X$ has Picard rank $2$ so it comes with two extremal contractions, one is a $\mathbb P^1$-bundle $\pi_1 :  X \rightarrow \mathbb P^2$, and another is a divisorial contraction $\pi_2 : X \rightarrow Q$. Let $H_i$ be the pullback of the hyperplane class via $\pi_i$. Then it follows from \cite[Theorem 5.1]{MM83} that
\[
-K_X = H_1 + 2H_2.
\]
Since $\delta(X, H_2) = \delta(Q, H) = 1$ and $\pi_2$ is birational, it follows that $\delta(X, -K_X) \leq 1/2$. Thus by Lemma~\ref{lemm:generalconicbundle}, $\delta(X, -K_X) = 1/2$ is proved.

Next we have
\[
\alpha(H_1 + 2H_2 - 3H_1) + \frac{2\alpha -1}{2} \cdot 3H_1 = 2\alpha H_2 + \frac{2\alpha -3}{2} H_1.
\]
Let $D$ be the exceptional divisor of $\pi_2$. Then we have $H_1 = H_2 -D$. Thus the above divisor becomes
\[
\frac{6\alpha -3}{2}H_2 - \frac{2 \alpha - 3}{2}D.
\]
Thus when $\frac{6\alpha -3}{2} \geq 1$ and $2 \alpha - 3 \leq 0$, i.e, $5/6 \leq \alpha \leq 3/2$ the assumption of Theorem~\ref{theo:main} holds. On the other hand let $\ell \subset X$ be the strict transform of a line on $Q$ not meeting with center of $\pi_2$. Then we have 
\[
\left(\frac{6\alpha -3}{2}H_2 - \frac{2 \alpha - 3}{2}D \right).\ell = \frac{6\alpha -3}{2}.
\]
Thus since such $\ell$ deforms to cover $X$, by arguing as Lemma~\ref{lemm:coveringcurves}, we conclude that $\alpha = 5/6$ is the minimum value satisfying the assumptions of Theorem~\ref{theo:main}.

\end{exam}

\subsection{Fano threefolds with Picard rank $3$}
According to \cite{MM81}, there are $31$ deformation types of smooth Fano $3$-folds with Picard rank $3$. It follows from \cite[p. 125, (9.1)]{MM83} that all such Fano $3$-folds come with a conic bundle structure except the blow-up of $\mathbb P^3$ along a disjoint union of a line and a conic. Again a conic bundle structure with singular fibers which is extremal never comes with a rational section. 
Note that if $X$ is a Fano conic bundle which does not admit a divisorial contraction to a Fano conic bundle of Picard rank $2$ with a rational section, then its extremal conic bundle structure admits singular fibers. For such a $3$-fold, one can conclude that it does not admit a rational section. This implies that there are $25$ deformation types of $3$ dimensional Fano conic bundles with a rational section. Here is the list of these Fano $3$-folds from Table 3 of \cite{MM81}:

\begin{center}
  \begin{tabular}{ c | l | c | c }
    \hline
   no &  $X$&$6\delta$ & $2\alpha$ \\ \hline
    3 &   a divisor on  $\mathbb P^1\times \mathbb P^1 \times \mathbb P^2$ of tridegree $(1, 1, 2)$ & $\leq 6$ &$\leq 5$  \\ \hline
     $5$ & the blow-up of $\mathbb P^1 \times \mathbb P^2$ with center a curve $C$ of bidegree $(5,2)$ &$\leq 6$ & $\leq 5$ \\ 
     &  such that the projection $C \rightarrow \mathbb P^2$ is an embedding & &\\ \hline
      $7$ &  the blow up of $W$ (no $32$)&  $\leq 4$ & $\leq 3$  \\  & with center an intersection of two members of $|-\frac{1}{2}K_W|$& &  \\ \hline
       $8$ &  a member of the linear system $|p_1^*g^*\mathcal O(1)\otimes p_2 \mathcal O(2)|$ on $\mathbb F_1\times \mathbb P^2$  where & $\leq 6$& $\leq 5$
       \\ & $p_i$ is the projection to each factor and $g:\mathbb F_1 \rightarrow \mathbb P^2$ is the blowing up & & \\ \hline
        $9$ &   the blowing up of the cone $W_4 \subset \mathbb P^6$ over the veronese surface $R_4 \subset \mathbb P^5$  & $3$ &$\leq 7/5$  \\  & with center a disjoint union of the vertex and quartic in $R_4 = \mathbb P^2$ & & \\ \hline
         $11$ &   the blow up of $V_7$ (no $35$)& $3$ & $5/2$  \\ &  with center an intersection of two members of $|-\frac{1}{2}K_{V_7}|$ & & \\ \hline
         12 &  the blow up of $\mathbb P^3$ with center a disjoint union of a line and a twisted cubic & $3$ & $8/3$  \\ \hline
          13 & the blow up of $W \subset \mathbb P^2 \times \mathbb P^2$ with center a curve $C$ of bidegree $(2, 2)$ on it & $3$ & $\leq 5/2$ \\  & such that each projection from $C$ to $\mathbb P^2$ is an embedding & & \\ \hline
          14 &  the blow up of $\mathbb P^3$ with center a disjoint union of a point and a plane cubic & $\leq 6$& $\leq 9/5$ \\ \hline
          15 &  the blow up of $Q \subset \mathbb P^4$ with center a disjoint union of a line and a conic &  $3$&  $5/2$ \\ \hline
          16 &   the blow up of $V_7$ with center the strict transform of a twisted cubic & $3$ & $5/2$
          \\ & passing through the center of the blow up $V_7 \rightarrow \mathbb P^3$ & & \\ \hline
          17 &  a smooth divisor on $\mathbb P^1 \times \mathbb P^1 \times \mathbb P^2$ of tridegree $(1, 1, 1)$ &$3$ &$5/2$ \\ \hline
           19 &   the  blow up of $Q \subset\mathbb P^4$ with center two points which are not colinear & $3$ &$5/3$ \\ \hline
            20 &  the  blow up of $Q \subset\mathbb P^4$ with center a disjoint union of two lines &$3$ & $5/2$ \\ \hline
          21 &  the  blow up of $\mathbb P^1 \times \mathbb P^2$ with center a curve of bidegree $(2, 1)$ & $3$ &$5/2$ \\ \hline
           22 &  the  blow up of $\mathbb P^1 \times \mathbb P^2$ with center a conic in $\{t\} \times \mathbb P^2$ & $3$ &$5/3$  \\ \hline
           23 &   the blow up of $V_7$ with center the strict transform of a conic & $3$ & $5/3$
          \\  & passing through the center of the blow up $V_7 \rightarrow \mathbb P^3$& & \\ \hline
             24 &  the fiber product $W\times_{\mathbb P^2}\mathbb F_1$, where  $W$ is no $32$ & $3$ & $5/2$
             \\ &  $W \rightarrow \mathbb P^2$ is the $\mathbb P^1$-bundle and $\mathbb F_1 \rightarrow \mathbb P^2$ is the blowing up & & \\ \hline
              25 & $\mathbb P(\mathcal O(1,0) \oplus \mathcal O(0,1))$ over $\mathbb P^1 \times \mathbb P^1$ &$3$ & $3/2$\\ \hline
               26 &  the blow up of $\mathbb P^3$ with center a disjoint union of a point and a line &  $3$& $5/3$ \\ \hline
               27 &  $\mathbb P^1 \times \mathbb P^1 \times \mathbb P^1$  & $3$ & $2$ \\ \hline
                   28 &  $\mathbb P^1 \times \mathbb F_1$ & $3$ & $2$ \\ \hline
                   29 & the blow up of $V_7$ with center a line  & $3$ &$\leq 7/5$ \\  & on the exceptional set $D = \mathbb P^2$ of the blow up $V_7 \rightarrow \mathbb P^3$ & & \\ \hline
                     30 &   the blow up of $V_7$ with center the strict transform of a line &$3$ & $4/3$
          \\  & passing through the center of the blow up $V_7 \rightarrow \mathbb P^3$ & & \\ \hline
    31 & $\mathbb P(\mathcal O \oplus \mathcal O(1,1))$ over $\mathbb P^1 \times \mathbb P^1$ &$3$ & $\leq 4/3$ \\ 
   
    \hline
  \end{tabular}
\end{center}

Note that no 24-31 are toric, thus Manin's conjecture is known for these cases by \cite{BT-general} and \cite{BT-0}. Let us demonstrate the computation of $\delta(X, -K_X)$ and $\alpha$ in some cases:

\

\begin{exam}[no $23$]
\label{exam:no23}
Let $X$ be a Fano $3$-fold of no $23$. Then $X$ admits a divisorial contraction $\beta: X \rightarrow V_7$ with the exceptional divisor $D_1$. The Fano $3$-fold $V_7$ admits two extremal contractions: one is a $\mathbb P^1$-bundle $\pi_1 : V_7 \rightarrow \mathbb P^2$ and another is the blow down $V_7 \rightarrow \mathbb P^3$. We denote the pullback of the hyperplane class via $\pi_i$ by $H_i$. 
One can conclude that the only conic bundle structure on $X$ is $\pi_1 \circ \beta$.
It follows from \cite[Theorem 5.1]{MM83} that
\[
-K_{V_7} = 2H_1 + 2H_2.
\]
Thus we have 
\[
-K_X = 2\beta^*H_1 + 2\beta^*H_2 -D_1.
\]
Since $\beta^*H_1 - D_1$ is effective and the morphism associated to $|H_2|$ is birational, one can conclude that $\delta(X, -K_X) = 1/2$ by Lemma~\ref{lemm:generalconicbundle}.

Let $D_2$ be the strict transform of the exceptional divisor of $\pi_2$. Then we have
\[
\alpha(2\beta^*H_1 + 2\beta^*H_2 -D_1 - 3\beta^*H_1) + \frac{2\alpha-1}{2}\cdot 3\beta^*H_1 = 2\alpha \beta^*H_2 - \alpha D_1  + \frac{4\alpha -3}{2}\beta^*H_1
\]
Since we have $\beta^*H_2 = \beta^*H_1 + D_2$, the above divisor becomes
\[
\beta^*H_2 + (2\alpha-1)D_2+ \frac{8\alpha -5}{2}\beta^*H_1- \alpha D_1 
\]
Since $\beta^*H_1-D_1 \geq 0$, in the case of $ \frac{8\alpha -5}{2}\geq\alpha$, i.e., $\alpha \geq 5/6$, the above divisor satisfies the assumption of Theorem~\ref{theo:main}. On the other hand let $\ell$ be the strict transform of a line meeting with the center of $D_1$. When $\alpha = 5/6$, we have
\[
\left(\beta^*H_2 + (2\alpha-1)D_2+ \frac{8\alpha -5}{2}\beta^*H_1- \alpha D_1 \right).\ell = 1
\]
Thus since such $\ell$ deforms to cover $X$, by arguing as Lemma~\ref{lemm:coveringcurves}, we conclude that $\alpha = 5/6$ is the minimum value satisfying the assumption of Theorem~\ref{theo:main}.

\end{exam}

\begin{exam}[no $12$]
\label{exam:no12}
Let $X$ be a Fano $3$-fold of no $12$. Then it admits a conic bundle structure $\pi_1 :  X \rightarrow \mathbb P^2$, a birational morphism $\pi_2 : X \rightarrow \mathbb P^3$, and a del Pezzo fibration $\pi_3 : X \rightarrow \mathbb P^1$. Let $H_i$ be the pullback of the hyperplane class via $\pi_i$. Then we have
\[
-K_X = H_1 + H_2 + H_3
\]
Since $|H_2|$ defines a birational morphism to $\mathbb P^3$ and $|H_1 + H_3|$ defines a birational morphism to $\mathbb P^2 \times \mathbb P^1$ we can conclude that $\delta(X, -K_X) \leq 1/2$. Thus by Lemma~\ref{lemm:generalconicbundle}, we have $\delta(X, -K_X) = 1/2$.
Next we consider the following divisor
\[
\alpha(H_1 + H_2 + H_3 - 3H_1) + \frac{2\alpha - 1}{2} \cdot 3H_1 = (3 \alpha - 3)H_2 + \alpha H_3 - \frac{2\alpha -3}{2}D_1 
\]
where $D_1$ is the exceptional divisor of $\pi_2$ whose center is a twisted cubic. When $3\alpha - 3 \geq 1$ and $2\alpha - 3 \leq0$, i.e., $4/3 \leq \alpha \leq 3/2$ the assumptions of Theorem~\ref{theo:main} holds. On the other hand let $\ell$ be the strict transform of a general line meeting with the line which is the center of $\pi_2$. Then we have
\[
\left((3 \alpha - 3)H_2 + \alpha H_3 - \frac{2\alpha -3}{2}D_1 \right).\ell = 3 \alpha -3.
\]
Thus since such $\ell$ deforms to cover $X$, by arguing as Lemma~\ref{lemm:coveringcurves}, we conclude that $\alpha = 4/3$ is the minimum value satisfying the assumption of Theorem~\ref{theo:main}.

\end{exam}

\subsection{Fano threefolds with Picard rank $4$ or $5$}

For Fano $3$-folds in this range, all of them admit conic bundle structures with a rational section. Here is the list of Fano $3$-folds of Picard rank $4$ from \cite[Table 4]{MM81} as well as \cite{MM03}:

\begin{center}
{\bf Fano $3$-folds with Picard rank $4$}
  \begin{tabular}{ c | c | l |c|c}
    \hline
   no &  $X$ &$6\delta(X, -K_X)$&$2\alpha$ \\ \hline
    $1$ &  a smooth divisor on $(\mathbb P^1)^4$ of multidegree $(1, 1, 1, 1)$ & $3$&$3$\\ \hline
     $2$ & the blow up of the cone over a quadric surface $S \subset \mathbb P^3$ &$\leq 6$& $\leq 2$ \\
       & with center a disjoint union of the vertex and an elliptic curve on $S$ && \\ \hline
      $3$ &  the blow-up of $\mathbb P^1 \times\mathbb P^1 \times\mathbb P^1$ with center a curve of tridegree $(1, 1, 2)$ & $3$ &$\leq 2$ \\ \hline
       $4$ &  the blow up of $Y$ (no $19$ Table $3$) & $\leq 6$&$\leq 2$ \\ &  with center the strict transform of a conic passing through $p$ and $q$ &&\\ \hline
        $5$ &   the blow up of $\mathbb P^1 \times \mathbb P^2$& $\leq 6$& $2$ \\ &  with center two disjoint curves of bideree $(2, 1)$ and $(1, 0)$ &&\\ \hline
         $6$ &  the blow up of $\mathbb P^3$ with center three disjoint lines & $3$ & $2$\\ \hline
         $7$ &  the blow up of $W \subset \mathbb P^2 \times \mathbb P^2$ &$3$& $5/2$ \\ &  with center two disjoint curves of bidegree $(0,1)$ and $(1, 0)$ && \\ \hline
    $8$ &  the blow-up of $\mathbb P^1 \times\mathbb P^1 \times\mathbb P^1$ with center a curve of tridegree $(0, 1, 1)$ & $3$& $2$ \\ \hline
     $9$ &  the blow up of $Y$ (no $25$ Table 3)& $3$& $2$ \\ &  with center an exceptional line of the blowing up $Y \rightarrow \mathbb P^3$&& \\ \hline
      $10$ & $\mathbb P^1\times S_7$ & $3$& $2$\\ \hline
       $11$ &  the blow up of $\mathbb P^1 \times \mathbb F_1$ with center $t\times e$ &$3$  & $2$
       \\ &  where $t \in \mathbb P^1$ and $e$ is an exceptional curve on $\mathbb F_1$&& \\ \hline
        $12$ &  the blow up of $Y$ (no $33$ Table 2) &$3$& $2$
        \\ &  with center two exceptional lines of the blowing up $Y \rightarrow \mathbb P^3$& & \\ \hline
        $13$ &  the blow-up of $\mathbb P^1 \times\mathbb P^1 \times\mathbb P^1$ with center a curve of tridegree $(1, 1, 3)$ & $\leq 6$ & $\leq 3$\\ \hline
  \end{tabular}
\end{center}
Here $S_7$ is a smooth del Pezzo surface of degree $7$. Note that no 9-12 are toric, so Manin's conjecture is known for these cases by \cite{BT-general} and \cite{BT-0}. Again let us illustrate the computation of $\delta(X, -K_X)$ and $\alpha$ in some cases:
\begin{exam}[no $6$]
\label{exam:no6}
Let $X$ be a Fano $3$-fold of no $6$. Then $X$ admits three del Pezzo fibrations $\pi_i : X \rightarrow \mathbb P^1$. It also admits a birational morphism $\pi : X \rightarrow \mathbb P^3$. Let $H_i$ be the pullback of the hyperplane class via $\pi_i$. Let $H$ be the pullback of the hyperplane class via $\pi$. Then we have
\[
-K_X = H + H_1 + H_2 + H_3.
\]
Since both $|H|$, $|H_1 + H_2 + H_3|$ are birational, we conclude that $\delta(X, -K_X) = 1/2$.

Next any conic bundle structure on $X$ is given by $|H_i + H_j|$ where $i \neq j$. So we look at the conic bundle structure defined by $|H_1 + H_2|$. We consider
\[
\alpha(H + H_3 - H_1 - H_2) + (2\alpha -1)(H_1 + H_2) = \alpha H + \alpha H_3 + (\alpha - 1)H_1 + (\alpha -1)H_2.
\]
From this description we may conclude that $\alpha = 1$ satisfies the assumptions of Theorem~\ref{theo:main}. On the other hand, by looking at the strict transform $C$ of a line such that $H_3.C = 0$, we may conclude that $\alpha = 1$ is the minimum value satisfying the assumptions of Theorem~\ref{theo:main}.
\end{exam}

\begin{exam}[no $4$]
Let $V_7$ be the blow-up of $\mathbb P^3$ at a point $p$. It admits two extremal rays and we denote each contraction morphism by $\pi_1 : V_7 \rightarrow \mathbb P^2$ and $\pi_2 : X \rightarrow \mathbb P^3$. Let $H_i$ be the pullback of the hyperplane via $\pi_i$. Let $X'$ be the blow-up of a fiber of $\pi_1$ which is the strict transform of a line $\ell$ passing through $p$. It admits a conic bundle structure over $\mathbb F_1$. Let $X$ be the the blow up of $X'$ along the strict transform of a conic $C_0$ not meeting with $\ell$. Then $X$ is a smooth Fano $3$-fold of no $4$.
We denote the strict transform of the exceptional divisor of $X' \rightarrow V_7$ by $D_1$ and the exceptional divisor of $X \rightarrow X'$ by $D_2$. Then we have
\[
-K_X = 2H_1 + 2H_2 - D_1 - D_2.
\]
Since $2H_1 + H_2- D_1 -D_2$ is linearly equivalent to an effective divisor, it follows that $\delta(X, -K_X) \leq 1$.

Next we consider
\[
\alpha(-K_X - (3H_1 - D_1)) + \frac{2\alpha -1}{2}(3H_1-D_1) = 2\alpha H_2 - \alpha D_2 + \frac{4\alpha -3}{2} H_1 - \frac{2\alpha -1}{2}D_1.
\]
Since $H_2-D_2$ and $H_1 - D_1$ are effective, it follows that $\alpha = 1$ satisfies the assumptions of Theorem~\ref{theo:main}.
\end{exam}

Finally we discuss the case of Picard rank $5$. Here is the list of Fano $3$-folds of Picard rank $5$ from \cite[Table 5]{MM81}

\begin{center}
{\bf Fano $3$-folds with Picard rank $5$}
  \begin{tabular}{ c | c | l |c | c }
    \hline
   no &  $X$  &$6\delta(X, -K_X)$ &$2\alpha$ \\ \hline
 $1$ &  the blow up of $Y$ (no $29$ Table 2) & $3$& $\leq 2$
        \\ &  with center three exceptional lines of the blowing up $Y \rightarrow  Q$&  &  \\ \hline
      $2$ &  the blow up of $Y$ (no $25$ Table 3)& $3$ & $\leq 2$
        \\ &  with center two exceptional lines $\ell$ and $\ell'$ of the blowing up $\phi: Y \rightarrow \mathbb P^3$ && \\ &  such that $\ell$ and $\ell'$ lie on &&\\ &  the same irreducible component of the exceptional set for $\phi$&& \\ \hline
      $3$ & $\mathbb P^1\times S_6$ &$3$&$2$ \\ \hline
  \end{tabular}
\end{center}
Here $S_6$ is a smooth del Pezzo surface of degree $6$. Note that no 2, 3 are toric, so Manin's conjecture is known by \cite{BT-general} and \cite{BT-0}.

\subsection{Fano threefolds with Picard rank $\geq 6$}

We start this section by the following theorem of Mori-Mukai:
\begin{theo}{\cite[Theorem 1.2]{MM83}}
Let $X$ be a smooth Fano $3$-fold and we denote its Picard rank by $\rho(X)$.
Suppose that $\rho(X) \geq 6$. Then $X$ is isomorphic to $\mathbb P^1\times S_{11-\rho(X)}$
where $S_d$ is a smooth del Pezzo surface of degree $d$. 
\end{theo}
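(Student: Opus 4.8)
The plan is to appeal directly to the extremal ray structure of Fano $3$-folds as developed by Mori and Mukai, and to run an induction on the Picard rank using elementary contractions. First I would recall that for a smooth Fano $3$-fold $X$ the cone of curves $\overline{\mathrm{NE}}(X)$ is rational polyhedral, so $X$ admits at least $\rho(X)$ extremal rays, each giving an elementary contraction $\varphi\colon X \to X'$ which is one of: a conic bundle or del Pezzo fibration over a surface or curve (fiber type), a divisorial contraction to a smooth Fano $3$-fold $X'$ of Picard rank $\rho(X)-1$, or a small contraction (which Mori–Mukai rule out for smooth $3$-folds in the relevant cases). The key numerical input is a bound on how large $\rho(X)$ can be: one shows, via the classification of possible exceptional divisors of divisorial contractions together with intersection-theoretic constraints coming from $(-K_X)^3 > 0$ and the structure of $\overline{\mathrm{Eff}}^1(X)$, that once $\rho(X) \geq 6$ every extremal contraction of fiber type must be a projection onto a $\mathbb{P}^1$-factor.

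The core of the argument is to produce the $\mathbb{P}^1$-factor. I would argue that when $\rho(X) \geq 6$, $X$ cannot be obtained by a divisorial contraction from a Fano $3$-fold of smaller Picard rank in a way compatible with the del Pezzo / conic bundle fibrations present, so $X$ must carry a fiber-type contraction $f\colon X \to S$ with $S$ a surface; a careful count (using that $S$ is then a weak del Pezzo, hence $\rho(S) \leq 9$, combined with the fiber dimension and the Fano condition on $X$) forces $f$ to be a $\mathbb{P}^1$-bundle and in fact forces a section splitting the bundle, giving $X \cong \mathbb{P}^1 \times S$. Then $-K_X = -K_{\mathbb{P}^1} \boxtimes(-K_S)$ being ample forces $-K_S$ ample, so $S = S_d$ is a smooth del Pezzo surface, and $\rho(X) = 1 + \rho(S_d) = 1 + (10-d) = 11-d$, i.e. $d = 11 - \rho(X)$, which is the claimed statement.

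The main obstacle will be establishing that the $\mathbb{P}^1$-fibration actually splits as a product once $\rho(X)$ is large — a priori $X$ could be a nontrivial $\mathbb{P}^1$-bundle or a conic bundle with degenerate fibers over $S$. Here I would invoke the following dichotomy: a smooth Fano $3$-fold of high Picard rank that is a conic bundle over $S$ with at least one singular fiber admits an extra extremal ray contracting a component of a singular fiber, and iterating this reduces $\rho$; tracking the resulting chain of contractions against the classification of del Pezzo surfaces pins down exactly the product case. In the interest of brevity I will not reproduce the full enumeration of extremal contractions here but cite the relevant structural results of Mori–Mukai (\cite{MM81}, \cite{MM83}); the upshot is that the only configuration surviving for $\rho(X) \geq 6$ is $X \cong \mathbb{P}^1 \times S_{11-\rho(X)}$, as asserted.
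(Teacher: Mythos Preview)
The paper does not prove this statement at all: it is quoted verbatim as \cite[Theorem 1.2]{MM83} and used as a black box to organise the discussion of Fano $3$-folds with $\rho(X)\geq 6$. So there is no ``paper's own proof'' to compare your proposal against; the result is a deep piece of the Mori--Mukai classification and the author is simply invoking it.

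That said, a few comments on the sketch itself. Your broad architecture---analyse extremal contractions, force a fibre-type contraction $X\to S$, and then argue $X\cong\mathbb P^1\times S$---is indeed the shape of the Mori--Mukai argument, but several of the steps you describe are either in the wrong direction or not justified. For instance, you write that for $\rho(X)\geq 6$ the variety ``cannot be obtained by a divisorial contraction from a Fano $3$-fold of smaller Picard rank''; in fact the Mori--Mukai method runs the opposite way, repeatedly contracting divisors to reduce $\rho$ and tracking what happens. Likewise, your claimed dichotomy that a singular conic-bundle fibre always yields an extremal ray contracting one of its components is not correct in general: the components of a reducible conic need not span extremal rays individually, and even when one can contract such a component the target need not be smooth (or Fano), so the induction does not go through as stated. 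The genuine content of the theorem---why the $\mathbb P^1$-bundle must be trivial once $\rho$ is large---comes from a detailed enumeration in \cite{MM81,MM83} of all nontrivial $\mathbb P^1$- and conic bundles that are Fano, showing their Picard ranks top out at $5$; your sketch does not supply this, and without it the splitting step is an assertion rather than an argument.
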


Thus in this section, we study the product of a smooth del Pezzo surface $S_d$ with $\mathbb P^1$. Note that weak Manin's conjecture for the product follows as soon as weak Manin's conjecture is known for $S_d$ by \cite{FMT89}, so we omit the discussion of $\alpha$ in this section. 


\begin{prop}
\label{rema:product}
Let $X = \mathbb P^1 \times S$ where $S$ is a smooth del Pezzo surface of degree $d$ with $1 \leq d \leq 8$. Then we have $\delta(X, -K_X) = \delta(S, -K_S)$.
\end{prop}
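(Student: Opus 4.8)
The plan is to prove the two inequalities $\delta(\mathbb P^1\times S, -K_X)\le \delta(S,-K_S)$ and $\delta(\mathbb P^1\times S,-K_X)\ge\delta(S,-K_S)$ separately. For the lower bound, I would use Lemma~\ref{lemm:coveringcurves}: take a general point $P=(p,q)\in X_1=\mathbb P^1\times S$, and use the curve $C_t$ in the fiber $F_{P}$ (over $P$ of the first projection $\pi_1\circ\alpha$) given by $\{p\}\times (\text{curve in } S_2 \text{ achieving the Seshadri minimum at } q)$ — this is exactly the curve used in Lemma~\ref{lemm:lowerboundfordelta}, now regarded inside $X\times X$. Writing $s_0=\delta(S,-K_S)$ (which by the remark after Proposition~\ref{prop:degree1} equals $1/\epsilon(-K_S,q)$ for $S$ of degree $\le 8$), one checks $(-K_X[2]-s_0 E).C_t = (-K_X\cdot C_t) - s_0\,\mathrm{mult}_P(C_t)$; since $C_t$ is contracted by projection to the $\mathbb P^1$-factor, $-K_X\cdot C_t = -K_S\cdot C_t'$ where $C_t'$ is the image curve in $S$, and the computation reduces verbatim to the del Pezzo case, giving $0$. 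As $P$ varies over a Zariski open set these curves sweep out a dominant subvariety of $W'$, so Lemma~\ref{lemm:coveringcurves} yields $\delta(X,-K_X)\ge s_0$.

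For the upper bound, I would exploit the product structure $-K_X = \pi_X^*(-K_{\mathbb P^1}) + \pi_S^*(-K_S)$, i.e. on $\mathbb P^1\times S$ we have $-K_X \sim 2H_{\mathbb P^1} + p^*(-K_S)$ where $H_{\mathbb P^1}$ is a fiber of the projection to $S$. The idea is to decompose $-K_X[2] - s E$ inside $W'$, the blow-up of $(\mathbb P^1\times S)^2$ along the diagonal. One wants to write $-K_X[2] - \delta(S,-K_S)E$ (up to $\mathbb R$-linear equivalence, after adding $\epsilon$) as a sum of an effective divisor supported away from any dominant component plus a contribution of the form $(-K_S)[2]_S - \delta(S,-K_S)E_S$ pulled back appropriately; the point is that $2H_{\mathbb P^1}[2]$ contributes enough positivity (since $\delta(\mathbb P^1, H_{\mathbb P^1})=1$ and $|2H_{\mathbb P^1}|$ separates points on $\mathbb P^1$ after doubling) to absorb the full $E$ along the $\mathbb P^1$-directions, while the $S$-directions are handled by the established bound on $\delta(S,-K_S)$. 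Concretely, I expect to use that the diagonal of $\mathbb P^1\times S$ inside the product is cut out (locally, scheme-theoretically) by the diagonal of $\mathbb P^1$ and the diagonal of $S$, so that $E$ "splits" into an $\mathbb P^1$-part and an $S$-part, and the stable base locus of $-K_X[2]-sE$ is controlled by the intersection of the corresponding stable base loci on the two factors, neither of which contains a dominant non-$E$ component once $s\ge \delta(S,-K_S)$.

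The main obstacle is making the last decomposition rigorous: the exceptional divisor $E$ over the diagonal of a product does \emph{not} literally split as a sum of pullbacks of exceptional divisors from blow-ups of the factors, because blowing up a codimension-$(1+2)=3$ locus is not the same as a product of blow-ups of codimension $1$ and codimension $2$ loci. I would handle this by working on a common resolution or by a direct local computation: choose local analytic coordinates so that the diagonal of $\mathbb P^1$ is $\{t_1=t_2\}$ and the diagonal of $S$ is $\{x_1=x_2, y_1=y_2\}$, and estimate the order of vanishing of explicit sections of $-K_X[2]$ along $E$ by combining a section coming from $|2H_{\mathbb P^1}|$ (vanishing to order $\ge 1$ along the $\mathbb P^1$-diagonal, hence along all of $E$) with sections from the del Pezzo construction on $S$. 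This is the computational heart; once it is in place, the matching of the two inequalities gives $\delta(X,-K_X)=\delta(S,-K_S)$, and in fact the remark-style observation that $\delta(S,-K_S)$ is attained (the stable base locus bound is sharp) should pass through as well. One should also double-check the edge cases $d=1,2$ where $\delta(S,-K_S)>1$, to confirm the product formula still holds with the same argument.
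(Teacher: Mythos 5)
Your lower bound is the paper's argument (transplant curves from $W_S'$ into the fibers of $W_X'\to X_1$ over points $(p,q)$ and apply Lemma~\ref{lemm:coveringcurves}), but as written it only certifies $\delta(X,-K_X)\ge 1/\epsilon(-K_S,q)$, and your claim that this equals $\delta(S,-K_S)$ for all $1\le d\le 8$ is false: for $d=1$ one has $1/\epsilon(-K_S,q)=1$ while Proposition~\ref{prop:degree1} gives $\delta(S,-K_S)\ge 3/2$, the latter coming from the involution curves of Proposition~\ref{prop:degree2}, not from Seshadri-minimizing curves. The fix is to transplant \emph{whatever} dominating family of curves $C\subset W_S'$ certifies the lower bound on $S$ (e.g.\ $q\mapsto((p,q),(p,\iota(q)))$ for the involution curves), which works verbatim since $H_1$ restricts trivially to such curves; this is what the paper does. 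There is also a normalization slip: to feed Lemma~\ref{lemm:coveringcurves} you need $(s_0(-K_X)[2]-E).C_t=0$ with $s_0=\delta(S,-K_S)$, i.e.\ the coefficient $1/s_0$ in front of $E$ in your displayed intersection number, not $s_0$.

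The real gap is in the upper bound. The obstacle you identify (that $E$ does not split as a sum of a $\mathbb P^1$-part and an $S$-part) is genuine, but the local order-of-vanishing computation you defer to is not needed and, as sketched, points the wrong way: multiplying a section vanishing along $E$ from the $\mathbb P^1$-direction with one from the $S$-direction over-vanishes and still leaves base locus along \emph{both} diagonal-type loci. The device you are missing is to decompose the \emph{same} divisor $-(\delta(S,-K_S)+\epsilon)K_X[2]-E$ in two ways, each time pairing the full $-E$ against a different piece and leaving a semi-ample remainder: once as $\bigl(2(\delta+\epsilon)H_1[2]+\epsilon H_2[2]\bigr)+\bigl(\delta H_2[2]-E\bigr)$ and once as $\bigl(2\epsilon H_1[2]+(\delta+\epsilon)H_2[2]\bigr)+\bigl(2\delta H_1[2]-E\bigr)$, where $H_1,H_2$ are the pullbacks of $\mathcal O_{\mathbb P^1}(1)$ and $-K_S$. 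Since adding a semi-ample divisor only shrinks the stable base locus, $\mathrm{SB}$ of the left side lies in $\mathrm{SB}(\delta H_2[2]-E)\cap\mathrm{SB}(2\delta H_1[2]-E)$; the first has dominant components only in $E$ and the strict transform of $\mathbb P^1\times\mathbb P^1\times\Delta_S$, the second (using $2\delta\ge 1$) is contained in the strict transform of $\Delta_{\mathbb P^1}\times S\times S$, and these two loci meet only over $\Delta_X$, i.e.\ inside $E$. Note in particular that your parenthetical ``neither of which contains a dominant non-$E$ component'' is wrong: each of these loci individually dominates both factors $X_i$; it is only their \emph{intersection} that is absorbed into $E$. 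With the two-decomposition trick in hand no splitting of $E$ and no coordinate computation is required.
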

\begin{proof}
Let $W_X = X \times X$ and $\alpha : W_X' \rightarrow W_X$ be the blow up of the diagonal. We use the same notation for $S$ as well. Let $H_1$ be the pullback of the ample generator via $p_1 : X \rightarrow \mathbb P^1$ and let $H_2$ be the pullback of the anticanonical divisor via $p_2 : X \rightarrow S$. Then the anticanonoical divisor of $X$ is
\[
-K_X = 2H_1 + H_2.
\]
Fix $\epsilon > 0$ and we consider
\[
-(\delta(S, -K_S) + \epsilon)K_X[2] -E = 2( \delta(S, -K_S)+\epsilon) H_1[2] + \epsilon H_2[2] + \delta(S, -K_S)H_2[2]-E.
\]
Since $2( \delta(S, -K_S)+\epsilon) H_1[2] + \epsilon H_2[2]$ is semi-ample, the stable locus of $-(\delta(S, -K_S) + \epsilon)K_X[2] -E$ is contained in the stable locus of $\delta(S, -K_S)H_2[2]-E$. The possible dominant components of the stable locus of $\delta(S, -K_S)H_2[2]-E$ are $E$ and the strict transform of 
\[
\mathbb P^1\times \mathbb P^1 \times \Delta_S
\]
where $\Delta_S$ is the diagonal of $W_S$.
Next we consider
\[
-(\delta(S, -K_S) + \epsilon)K_X[2] -E = 2\epsilon H_1[2] + (\delta(S, -K_S) + \epsilon )H_2[2] + 2\delta(S, -K_S)H_1[2]-E
\]
Since $2\epsilon H_1[2] + (\delta(S, -K_S) + \epsilon )H_2[2] $ is again semi-ample, the stable locus of $-(\delta(S, -K_S)  + \epsilon)K_X[2] -E$ is contained in the stable locus of $2\delta(S, -K_S) H_1[2]-E$. Since $\delta(S, -K_S) \geq 1/2$, it follows that the stable locus of $2\delta(S, -K_S)H_1[2]-E$ is contained in the strict transform $Z$ of
\[
 \Delta_{\mathbb P^1} \times  W_S \subset W_X
\]
where $\Delta_{\mathbb P^1} $ is the diagonal of $\mathbb P^1 \times \mathbb P^1$.
The variety $Z$ is isomorphic to $\mathbb P^1 \times W_S'$. 
From this one may conclude that $\delta(X, -K_X) \leq \delta(S, -K_S)$ by taking the intersection of two loci. 

On the other hand the discussion in Section~\ref{subsec:delpezzo} shows that in each case there are curves $C$ on $W_S'$ such that (i) $C$ deforms to dominate both $S_i$ and (ii) $(-\delta(S, -K_S)K_S[2]-E).C = 0$. Thus we conclude that $\delta(X, -K_X) \geq \delta(S, -K_S)$. Thus our assertion follows.

\end{proof}

\bibliographystyle{alpha}
\bibliography{Vojta_BM}

\end{document}